\renewcommand{\mathcal}[1]{{\mathscr#1}}
\newtheorem{theorem}{Theorem}[section]
\newtheorem{lemma}[theorem]{Lemma}
\newtheorem{prop}[theorem]{Proposition}
\theoremstyle{definition}
\newtheorem{definition}[theorem]{Definition}
\theoremstyle{remark}
\newtheorem{remark}[theorem]{Remark}
\numberwithin{equation}{section}
\newcommand{\R}{{\mathbb R}}
\renewcommand{\leq}{\leqslant}
\renewcommand{\geq}{\geqslant}
\newcommand{\ophi}{\overline{\phi}}
\newcommand{\oE}{\overline{E}}
\newcommand{\hphi}{\hat{\phi}}
\newcommand{\ovarphi}{\overline{\varphi}}
\newcommand{\hvarphi}{\hat{\varphi}}
\newcommand{\oophi}{\overline{\overline{\phi}}}
\newcommand{\hhphi}{\hat{\hat{\phi}}}
\newcommand{\oovarphi}{\overline{\overline{\varphi}}}
\newcommand{\hhvarphi}{\hat{\hat{\varphi}}}
\newcommand{\ozeta}{\overline{\zeta}}
\newcommand{\hzeta}{\hat{\zeta}}
\newcommand{\oalpha}{\overline{\alpha}}
\newcommand{\halpha}{\hat{\alpha}}
\newcommand{\oT}{\overline{T}}
\newcommand{\hT}{\hat{T}}
\newcommand{\oh}{\overline{h}}
\newcommand{\hh}{\hat{h}}
\newcommand{\oZ}{\overline{Z}}
\newcommand{\hZ}{\hat{Z}}
\newcommand{\oc}{\overline{c}}
\newcommand{\hc}{\hat{c}}
\newcommand{\ot}{\overline{t}}
\newcommand{\hr}{\hat{r}}
\newcommand{\ovr}{\overline{r}}
\newcommand{\hatt}{\hat{t}}
\newcommand{\ow}{\overline{w}}
\newcommand{\hw}{\hat{w}}
\newcommand{\os}{\overline{s}}
\newcommand{\hs}{\hat{s}}
\newcommand{\od}{\overline{d}}
\newcommand{\hd}{\hat{d}}
\newcommand{\otheta}{\overline{\theta}}
\newcommand{\htheta}{\hat{\theta}}
\newcommand{\ocos}{\overline{\cos}}
\newcommand{\osin}{\overline{\sin}}
\newcommand{\oone}{\overline{1}}
\newcommand{\ozero}{\overline{0}}
\newcommand{\hcos}{\hat{\cos}}
\newcommand{\hsin}{\hat{\sin}}
\newcommand{\hone}{\hat{1}}
\newcommand{\hzero}{\hat{0}}
\newcommand{\opi}{\overline{\pi}}
\newcommand{\hpi}{\hat{\pi}}
\renewcommand{\epsilon}{\varepsilon }
\newlength{\defbaselineskip}
\newcommand{\setlinespacing}[1]
           {\setlength{\baselineskip}{#1 \defbaselineskip}}
\begin{document}
\title[Nonradial Nodal Solutions  with Maximal Rank]{Desingularization of Clifford Torus   and Nonradial Solutions to the Yamabe
Problem with Maximal Rank }

\author[M. Medina]{Maria Medina}
\address[Maria Medina]{Facultad de Matem\'aticas, Pontificia Universidad Cat\'olica de Chile, Avenida Vicu\~na Mackenna 4860, Santiago, Chile}
\email{mamedinad@mat.puc.cl}

\author[M. Musso]{Monica Musso}
\address[Monica Musso]{Facultad de Matem\'aticas, Pontificia Universidad Cat\'olica de Chile, Avenida Vicu\~na Mackenna 4860, Santiago, Chile}
\email{musso@mat.puc.cl}

\author[J.Wei]{Juncheng Wei}
\address[Juncheng Wei]{Department of Mathematics, University of British Columbia, Vancouver, BC V6T 1Z2}
\email{jcwei@math.ubc.ca}

\thanks{The first author is supported by the grant FONDECYT Postdoctorado, No. 3160077, CONICYT (Chile) and grant MTM2013-40846-P, MINECO (Spain). The second author is supported by
 FONDECYT Grant 1160135 and Millennium Nucleus Center for Analysis of PDE, NC130017. The third author is supported by NSERC of Canada. }

\medskip

\begin{abstract}
Through desingularization of Clifford torus, we prove the existence of a sequence of nondegenerate (in the sense of Duyckaerts-Kenig-Merle (\cite{DKM})) nodal nonradial solutions to the critical Yamabe problem
$$-\Delta u=\frac{n(n-2)}{4}|u|^{\frac{4}{n-2}}u,\qquad u\in\mathcal{D}^{1,2}(\R^n).$$
The case $n=4$ is the first example in the literature of a solution with {\em maximal rank} ${\mathcal N}=2n+1+\frac{n(n-1)}{2}$.

\end{abstract}

\maketitle

%{\small
%\tableofcontents
%}

\part*{Introduction}

\noindent Consider the problem
\begin{equation}\label{prob}
-\Delta u=\gamma |u|^{p-1}u\hbox{ in }\mathbb{R}^n,\qquad \gamma:=\frac{n(n-2)}{4},\qquad u\in\mathcal{D}^{1,2}(\R^n),
\end{equation}
where $n\geq 4$, $p=\frac{n+2}{n-2}$ and $\mathcal{D}^{1,2}(\R^n)$ is the completion of $C_0^\infty(\R^n)$ with the norm $\|\nabla u\|_{L^2(\R^n)}$.
\medskip

When $u>0$, problem (\ref{prob}) arises in  the classical Yamabe problem or extremal equation for  Sobolev inequality. For positive or  sign-changing $u$ Problem (\ref{prob}) corresponds to  the steady state of the energy-critical focusing nonlinear wave equation
\begin{equation}
\label{2m}
\partial_t^2 u-\Delta u- |u|^{\frac{4}{n-2}} u=0, \ (t,x)\in \R \times \R^n.
\end{equation}
These are classical problems that have attracted the attention of many  researchers (\cite{DKM2, DKM3, KM1, KM2, KST}). The study of (\ref{2m})  naturally relies on the complete classification of the set of non-zero finite energy solutions to Problem  (\ref{prob}), which is defined by
\begin{equation}
\label{3m}
\Sigma:= \left\{ Q \in {\mathcal D}^{1,2} (\R^n) \backslash \{0\}: \ -\Delta Q= \frac{n(n-2)}{4} |Q|^{\frac{4}{n-2}} Q\right \}.
\end{equation}

\medskip

 By the classical work of Caffarelli-Gidas-Spruck \cite{CGS} all positive solutions to (\ref{prob}) are given by
\begin{equation}\label{bubble}
U(y)=\left(\frac{2}{1+|y|^2}\right)^{\frac{n-2}{2}},
\end{equation}
and all its translations and dilations
\begin{equation}
\label{bubble1}
U_{\alpha,\bar{y}}:=\alpha^{-\frac{n-2}{2}}U\left(\frac{y-\bar{y}}{\alpha}\right),\qquad \alpha>0,\; \bar{y}\in\mathbb{R}^n.
\end{equation}

 For sign-changing solutions much less is known. A direct application of Pohozaev's identity gives that all sign-changing solutions  to Problem (\ref{prob}) are nonradial. The existence of elements of $\Sigma$  that are nonradial, sign-changing, and with arbitrary large energy was first proved by Ding \cite{D} using  Ljusternik-Schnirelman  category theory. However no other qualitative properties are known for Ding's solutions. Recently more explicit constructions of sign-changing solutions to Problem (\ref{prob}) have been obtained by del Pino-Musso-Pacard-Pistoia \cite{dPMPP, dmpp2}.  In \cite{MW}, the second and the third authors established the rigidity of the solutions constructed in \cite{dPMPP} by showing that they are {\em nondegenerate} in the sense of Duyckaerts-Kenig-Merle (\cite{DKM}, see definitions below).

\medskip

The purpose of this work is to give a positive answer to an open  question formulated in the work of M. Musso and J. Wei (\cite{MW}): whether there exists a solution that, apart from nondegenerate, is {\em maximal}. To properly explain this framework, let us denote by
$$\Sigma:=\left\{Q\in \mathcal{D}^{1,2}(\R^n)\setminus\{0\}:-\Delta Q=\gamma |Q|^{p-1}Q\right\}$$
the set of nontrivial finite energy solutions of \eqref{prob}. It can be seen that the equation in \eqref{prob} is invariant under four transformations: translation, dilation, orthogonal transformation and Kelvin transform. More precisely, if $Q\in\Sigma$, then:
\begin{itemize}
\item[(i)] $Q(y+a)\in \Sigma$ for every $a\in\R^n$;

\item[(ii)] $\lambda^{\frac{n-2}{2}}Q(\lambda y)\in\Sigma$ for every $\lambda>0$;

\item[(iii)] $Q(Py)\in\Sigma$ for every $P\in \mathcal{O}_n$, where $\mathcal{O}_n$ denotes the classical orthogonal group;

\item[(iv)] $|y|^{2-n}Q(|y|^{-2}y)\in\Sigma$.
\end{itemize}
Denote by $\mathcal{M}$ the group of isometries of $\mathcal{D}^{1,2}(\R^n)$ generated by these transformations. Then, $\mathcal{M}$ derives a family of transformations in a neighborhood of the identity (see \cite[Lemma 3.8]{DKM}) of dimension
\begin{equation}\label{defN}
{\mathcal N}:=2n+1+\frac{n(n-1)}{2}.
\end{equation}
In particular, $\mathcal{M}$ generates the vector space
$$\tilde{\mathcal{I}}_Q=\mbox{span}\left\{
\begin{array}{c}
(2-n)y_\alpha Q+|y|^2\partial_{y_\alpha}Q-2y_\alpha y\cdot\nabla Q,\;\;\partial_{y_\alpha}Q,\;\;1\leq\alpha\leq n,\\
(y_\alpha \partial_{y_\beta}-y_\beta\partial_{y_\alpha})Q,\;\;1\leq\alpha<\beta\leq n,\;\;\frac{n-2}{2}Q+y\cdot Q
\end{array}
\right\}.$$
Consider the associated linearized operator around $Q\in\Sigma$, i.e.,
$$L_Q:=-\Delta-\gamma p |Q|^{p-2}Q,$$
and its kernel
$$\mathcal{I}_Q:=\{f\in\mathcal{D}^{1,2}(\R^n):\,L_Qf=0\}.$$
Clearly $\tilde{\mathcal{I}}_Q\subseteq\mathcal{I}_Q$ and, following the work of T. Duyckaerts, C. Kenig and F. Merle (\cite{DKM}), we can define the notion of {\it nondegeneracy}.
\begin{definition}\label{defNondeg}
$Q\in\Sigma$ is said to be {\it nondegenerate} if $\mathcal{I}_Q=\tilde{\mathcal{I}}_Q$.
\end{definition}
\noindent Let $Q$ be  nondegenerate. Its {\em rank} is defined as  the dimension of $\tilde{\mathcal{I}}_Q$, which is at most $\mathcal{N}$. Actually, the positive solutions $Q=W$ can be proved to be nondegenerate as a consequence of the radial symmetry, and $\tilde{\mathcal{I}}_W$, which is
$$\tilde{\mathcal{I}}_W=\left\{\frac{n-2}{2}W+y\cdot\nabla W,\;\;\partial_{y_\alpha}W,\;\;1\leq \alpha\leq n\right\},$$
has rank $n+1$ (\cite{Rey}). In this case, the rank is strictly less than ${\mathcal N}$.

In \cite{MW}, the authors give the first example of nodal nonradial sign-changing solution satisfying the nondegeneracy condition. Indeed, they consider the solution $u_k$ of \eqref{prob} built in \cite[Theorem 1]{dPMPP} given by
$$u_k(y)=U(y)-\sum_{j=1}^k \mu_k^{-\frac{n-2}{2}}U(\mu_k^{-1}(y-\xi_j))+o(1),$$
where
$$\mu_k:=\frac{c_n}{k^2}, \qquad \xi_j:=(e^{\frac{2j\pi i}{k}},0,\ldots),\qquad U(y):=\left(\frac{2}{1+|y|^2}\right)^{\frac{n-2}{2}},$$
and they prove that $\tilde{\mathcal{I}}_{u_k}=\mathcal{I}_{u_k}$, where the dimension of these vector spaces is $3n$, i.e. the rank is $3n$. Also in this case, the rank is strictly less than $\mathcal{N}$.

The purpose of this work is to provide the first example in the literature of a nondegenerate solution $u$ to \eqref{prob} which has the {\em maximal rank} ${\mathcal N}$.
\begin{definition}\label{defMaximal}
A nondegenerate solution $Q\in\Sigma$ is said to be {\it maximal} if
$$\mbox{dim}(\tilde{\mathcal{I}}_Q)=\mbox{dim}(\mathcal{I}_Q)={\mathcal N},$$
where ${\mathcal N}$ was defined in \eqref{defN}.
\end{definition}
\noindent Thus, our main result can be formulated as follows.
\begin{theorem}\label{teounico}
Let $n\geq 4$. Then, there exists a sequence of nodal solutions to \eqref{prob}, with arbitrarily large energy, which are nondegenerate according to Definition \ref{defNondeg}. If $n=4$ these solutions are maximal in the sense of Definition \ref{defMaximal}.
\end{theorem}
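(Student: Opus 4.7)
The plan is to build each $u_k$ as a \emph{desingularization of a Clifford torus}: start with the positive bubble $U$ and glue in $N_k \to \infty$ small negative bubbles distributed along a minimal Clifford torus in $S^n$, perturb by Lyapunov--Schmidt to an exact solution, and finally analyze the linearized operator to obtain nondegeneracy for every $n\ge 4$ and maximality for $n=4$.

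For the construction of the approximate solution I would pass to $S^n$ via stereographic projection, under which \eqref{prob} becomes the subcritical problem on the sphere whose positive solution is the constant $1$. Fix a minimal Clifford torus $T = S^{m_1}(r_1) \times S^{m_2}(r_2) \subset S^n$ with $m_1+m_2 = n-1$ and $r_j^2 = m_j/(n-1)$; for $n=4$ the only choice (up to swap) is $(m_1,m_2)=(1,2)$. Choose $N_k = k_1 k_2$ points $\{\xi_j\}\subset T$ as the orbit of a product of two cyclic rotation subgroups acting on the two $S^{m_j}$ factors, with $k_1,k_2 \to \infty$, and define (after pulling back to $\R^n$)
\begin{equation*}
V_k(y) = U(y) - \sum_{j=1}^{N_k} \mu_k^{-\frac{n-2}{2}} U\!\left(\tfrac{y-\xi_j}{\mu_k}\right).
\end{equation*}
The scale $\mu_k\to 0$ and small corrections to the $\xi_j$'s (including a tiny off-torus displacement) are chosen by the standard asymptotic expansion of the energy so that each small bubble is in equilibrium between the repulsive interaction with $U$ and the attractive interactions with its nearest neighbours; minimality of $T$ as a submanifold of $S^n$ is precisely what makes this equilibrium system solvable, and this is the ``desingularization'' input.

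For the Lyapunov--Schmidt step, search for $u_k = V_k+\phi_k$ as in \cite{dPMPP,dmpp2,MW}: project out the natural kernel attached to each bubble---the $n$ translations and the dilation, an $(n+1)(N_k+1)$-dimensional space---and invert the linearization $L_{V_k}$ in a weighted $L^\infty$ norm by a contraction argument. The error $-\Delta V_k -\gamma|V_k|^{p-1}V_k$ is small in this norm for $k$ large, so $\phi_k$ exists and is small. The Lagrange multipliers in the projected equation are then eliminated by a finite-dimensional bifurcation system whose leading term is the gradient of the interaction functional of the previous step; symmetry under $G_k=\Z_{k_1}\times\Z_{k_2}$ together with the continuous $O(m_1+1)\times O(m_2+1)$ action reduces it to a few scalar equations, solvable using the nondegeneracy of $T$ as a critical point of the area functional. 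The resulting energy grows like $(N_k+1)S_n$, so the sequence has arbitrarily large energy.

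For nondegeneracy and maximality, pick $\phi\in\mathcal{I}_{u_k}$ and decompose it along the irreducible $G_k$-representations. A blow-up of $\phi$ near each bubble must converge to an element of the classical single-bubble kernel $\mathrm{span}\{\partial_\lambda U,\partial_{y_1}U,\ldots,\partial_{y_n}U\}$, and the coefficients at different bubbles are coupled through a linear matching system whose matrix is controlled by the nearest-neighbour bubble interactions. A careful study of this system, parallel to \cite{MW} but now two-parameter and involving Fourier analysis on a discrete torus, shows that its only solutions come from infinitesimal symmetries in $\tilde{\mathcal{I}}_{u_k}$, yielding $\mathcal{I}_{u_k}=\tilde{\mathcal{I}}_{u_k}$ and hence nondegeneracy for every $n\ge 4$. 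The main obstacle is the maximality claim for $n=4$: one must verify that none of the ${\mathcal N} = 2\cdot 4+1+6 = 15$ infinitesimal generators of $\mathcal{M}$ (translations, Kelvin-translations, dilation, rotations) annihilates $u_k$ or equals a linear combination of the others modulo $\mathcal{I}_{u_k}$. Equivalently, the infinitesimal stabilizer of $u_k$ inside $\mathcal{M}$ must be trivial, a genuinely algebraic verification specific to the $S^1\times S^2$ Clifford torus in $S^4$ and its interplay with the conformal group of $\R^4$; in higher dimensions the larger ambient isometry group together with the multiple possible splits $(m_1,m_2)$ forces linear relations between the would-be generators, which is why the maximal rank property is only asserted in dimension four.
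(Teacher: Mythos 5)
Your proposal diverges from the paper's construction in a way that matters for the main claim. The paper does not place bubbles on the minimal Clifford torus $S^{m_1}(r_1)\times S^{m_2}(r_2)\subset S^n$: it places $k$ negative bubbles of scale $\mu=\delta^{2/(n-2)}/k^2$ on a circle in the $(y_1,y_2)$-plane and $h$ negative bubbles of a \emph{different} scale $\lambda=\varepsilon^{2/(n-2)}/h^2$ on a circle in the $(y_3,y_4)$-plane (see \eqref{points}), with $k\neq h$ in general; ``desingularization of the Clifford torus'' refers to breaking the symmetric case $\mu=\lambda$, $h=k$ of \cite{dmpp2}, which the paper explicitly notes has rank strictly less than $\mathcal{N}$ precisely because of its extra symmetry. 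Your $k_1k_2$-point grid on the torus at a single scale $\mu_k$ reinstates a symmetry of that kind, so the ``genuinely algebraic verification'' you defer at the end --- that the infinitesimal stabilizer of $u_k$ in $\mathcal{M}$ is trivial --- is not a routine check but the crux of the theorem, and there is real reason to doubt it holds for your configuration. Moreover, the balancing you invoke is not governed by minimality of $T$ as a submanifold of $S^n$: what must balance is the bubble-interaction force system (the interaction with the central positive bubble against nearest-neighbour interactions in two directions on the grid), and you give no argument that this system is solvable at a common scale, nor what that scale is.

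By contrast, the paper's maximality claim is a transparent dimension count once nondegeneracy is established: the kernel is shown to equal the span of the $N_0=5(n-1)$ explicit functions \eqref{z0}--\eqref{z5}, each the image of $u$ under a generator of $\mathcal{M}$. For $n\geq 5$ the rank deficit $\mathcal{N}-5(n-1)=(n-3)(n-4)/2$ is accounted for by the radial symmetry of $u$ in $(y_5,\ldots,y_n)$: rotations among those variables annihilate $u$, and the Kelvin generators in those directions coincide with the corresponding translations (cf.\ the identity \eqref{gino} in the Appendix), and this deficit vanishes exactly at $n=4$. Your sketch of the nondegeneracy step (blow-up at each bubble plus a discrete-Fourier matching system) is in the right spirit and parallels \cite{MW}, but as written the proposal neither constructs the solution the theorem is actually about nor supplies the linear-independence argument that makes the $n=4$ case maximal.
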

To prove this result, we will build a solution in the following way: let $k$ and $h$ be two large positive integers (not necessarily equal), and
\begin{equation}\label{deltaeps}
\mu:=\frac{\delta^{\frac{2}{n-2}}}{k^2},\qquad \lambda:=\frac{\varepsilon^{\frac{2}{n-2}}}{h^2},
\end{equation}
where $\delta$ and $\varepsilon$ are positive parameters so that
$$c_1<\delta<c_1^{-1},\qquad c_2<\varepsilon<c_2^{-1},$$
for some constants $c_1,c_2>0$ which are independent of $k$ and $h$ as they tend to infinity. Consider now the points
\begin{equation}\begin{split}\label{points}
\xi_j&:=\sqrt{1-\mu^2}(e^{\frac{2\pi i(j-1)}{k}},0,\ldots,0)\in \mathbb{R}^2\times \mathbb{R}^{n-2}, j=1,\ldots,k,\\
\eta_l&:=\sqrt{1-\lambda^2}(0,0,e^{\frac{2\pi i(l-1)}{h}},0,\ldots,0)\in \mathbb{R}^2\times\mathbb{R}^2\times \mathbb{R}^{n-4}, l=1,\ldots,h,
\end{split}\end{equation}
which satisfy
\begin{equation}\label{mod1}
|\xi_j|^2+\mu^2=1,\qquad |\eta_l|^2+\lambda^2=1.
\end{equation}
Consider
\begin{equation}\label{nodalSol}
u(y)=U(y)-\sum_{j=1}^k U_{\mu,\xi_j}(y)-\sum_{l=1}^h U_{\lambda,\eta_l}(y)+\phi(y)
\end{equation}
where $U$ is defined in \eqref{bubble},
\begin{equation}\label{transBubbles}
U_{\mu,\xi_j}(y):=\mu^{-\frac{n-2}{2}}U\left(\frac{y-\xi_j}{\mu}\right),\qquad U_{\lambda,\eta_l}(y):=\lambda^{-\frac{n-2}{2}}U\left(\frac{y-\eta_l}{\lambda}\right),
\end{equation}
and $\phi$ is a small function when compared with the other terms (for the sake of simplicity we do not make explicit the dependence of $u$ in $k$ and $h$).

Notice that functions $U$, $U_{\mu,\xi_j}$ and $U_{\lambda,\eta_l}$ are invariant under rotation of angle $\frac{2\pi}{k}$ in the $(y_1,y_2)$ plane and of angle $\frac{2\pi}{h}$ in the $(y_3,y_4)$ angle. Furthermore, they are even in the $y_\alpha$-coordinates, for $\alpha=2,4,5,\ldots,n$ and invariant under Kelvin's transform (due to \eqref{mod1}). Assume that  $\phi$ also satisfies these properties (we will prove this in Part \ref{Existence}).

Consider the following set of functions:
\begin{equation}\label{z0}
z_0(y):=\frac{n-2}{2}u(y)+\nabla u(y)\cdot y,
\end{equation}
\begin{equation}\label{z1}
z_\alpha(y):=\frac{\partial}{\partial y_\alpha}u(y),\qquad \alpha=1,\ldots,n,
\end{equation}
\begin{equation}\begin{split}\label{z2}
z_{n+1}(y):=&-y_2\frac{\partial}{\partial y_1}u(y)+y_1\frac{\partial}{\partial y_2}u(y),\\
z_{n+2}(y):=&-y_4\frac{\partial}{\partial y_3}u(y)+y_3\frac{\partial}{\partial y_4}u(y),
\end{split}\end{equation}
\begin{equation}\begin{split}\label{z3}
z_{n+\alpha+2}(y)&:=-2y_\alpha z_0(y)+|y|^2z_\alpha(y), \quad \alpha = 1, 2, 3, 4,
\end{split}\end{equation}
\begin{equation}\begin{split}\label{z4}
z_{n+\alpha+4}(y)&:=-y_\alpha z_1(y)+y_1z_\alpha(y), \qquad \alpha=3,\ldots,n,\\
z_{2n+\alpha+2}(y)&:=-y_\alpha z_2(y)+y_2z_\alpha(y), \qquad \alpha=3,\ldots,n,
\end{split}\end{equation}
and
\begin{equation}\begin{split}\label{z5}
z_{3n+\alpha-2}(y)&:=-y_\alpha z_3(y)+y_3z_\alpha(y), \qquad \alpha=5,\ldots,n,\\
z_{4n+\alpha-6}(y)&:=-y_\alpha z_4(y)+y_4z_\alpha(y), \qquad \alpha=5,\ldots,n.
\end{split}\end{equation}
Functions \eqref{z0} and \eqref{z1} are related to the invariance of problem \eqref{prob} under dilations and translations respectively, and \eqref{z2} to the rotation in the $(y_1,y_2)$ and $(y_3,y_4)$ planes. Likewise, \eqref{z3} arises from the invariance under Kelvin transform,  and \eqref{z4}, \eqref{z5} from the rotation in the planes $(y_1, y_\alpha)$, $(y_2,y_\alpha)$, for $\alpha=3,\ldots,n$, and $(y_3, y_\alpha)$, $(y_4,y_\alpha)$ for $\alpha=5,\ldots,n$.
If we denote by $L$ the linearized operator around $u$ associated to \eqref{prob}, i.e.,
\begin{equation}\label{lin}
L(\varphi):=\Delta \varphi+p\gamma|u|^{p-2}u\varphi,
\end{equation}
\eqref{z0}-\eqref{z5} provide $N_0:=5(n-1)$ elements of the kernel of $L$.

We will prove that these are indeed all the elements in the kernel, i.e., solution \eqref{nodalSol} is a second example of nodal nondegenerate solution of \eqref{prob}. But what is more remarkable here is that if $n=4$, then $N_0={\mathcal N}$, that is, the solution is maximal in the sense of Definition \ref{defMaximal}, which is the first example of a nondegenerate maximal solution in the literature, and answers the open question formulated in \cite{MW}.

\begin{remark}

When $\mu \not =\lambda, h \not = k$, our solution is different from the ones constructed in \cite{dPMPP, dmpp2}.  In \cite{dmpp2} the symmetric case $\mu=\lambda, h=k$ is considered, which corresponds to the {\em Clifford torus}. In this case the solution has an additional symmetry which reduces the problem to one dimensional. Because of this symmetry the rank of the solutions constructed in \cite{dmpp2} can be shown to be strictly less than ${\mathcal N}$.  Thus our solutions are  {\em new}. Our construction can be considered as a sort of {\em desingularization} of Clifford torus. For geometric application of desingularization of Clifford torus, we refer to the recent papers \cite{BreKa, Ka} and the references therein.

\end{remark}

\begin{remark}
The construction can be extended to higher even dimensions, that is, one can anagolously set bubbles in the $(y_5,y_6)$, $(y_7, y_8)$, $\ldots$, planes, in such a way that the solution is expected to be nondegenerate and the elements corresponding to  the invariances generate a space of dimension exactly ${\mathcal N}$. Therefore, this type of construction presumably provides a sequence of nodal nondegenerate and maximal rank solutions of \eqref{prob} for any even dimension $n\geq 4$. The existence of a maximal solution for odd dimensions is still an open question.
\end{remark}

\begin{remark}
Nondegenerate solutions to (\ref{prob}) play an important role in the analysis of possible singularity formations in energy-critical wave equations. We refer to \cite{DKM, DKM2, DKM3, KM1, KM2, KST} and the references therein.
\end{remark}

\begin{remark}
The existence of sign-changing solutions for critical exponents in other contexts has been studied in \cite{HV, RV1, RV2}.
\end{remark}

\begin{remark}
There exist many works on the uniqueness and nondegeneracy of {\em positive solutions} to semilinear equations, whether or not for classical nonlinear Schrodinger equations \cite{K} or for  nonlinear fractional equations \cite{FL, FLS}. The rank of the positive solutions is at most $n+1$. For sign-changing solutions the nondegeneracy  question is in general quite difficult without knowing the precise behavior of the solution. Our result is the first of the type for sign-changing solutions with maximal rank.

\end{remark}

Along the work we will denote points $y\in\mathbb{R}^n$, $n\geq 4$, as
$$y=(\overline{y},\hat{y}),\;\overline{y}:=(y_1,y_2),\;\hat{y}:=(y_3,y_4), \mbox{ if } n=4,$$
$$y=(\overline{y},\hat{y},y'),\;\overline{y}:=(y_1,y_2),\;\hat{y}:=(y_3,y_4),\;y':=(y_5,\ldots,y_n),\mbox{ if } n\geq 5,$$
and we will work with the norms
\begin{equation}\label{normStarStar}
\|h\|_{**}:=\|(1+|y|)^{n+2-\frac{2n}{q}}h\|_{L^q(\mathbb{R}^n)}, \qquad \|\phi\|_*:=\|(1+|y|^{n-2})\phi\|_{L^\infty(\mathbb{R}^n)},
\end{equation}
where $\frac{n}{2}<q<n$ is a fixed number.
\medskip

Part \ref{Existence} of the paper is devoted to prove that \eqref{nodalSol} solves \eqref{prob}, and Part \ref{nondegeneracyProof} concerns the proof of its nondegeneracy.

\part{Construction of the solution}\label{Existence}

To prove that \eqref{nodalSol} is a solution of \eqref{prob} we use a Lyapunov-Schmidt reduction method, following the ideas of \cite{dPMPP}. We linearize the equation around a first approximation and  take advantage of the invertibility tools available for this setting. Then, performing a careful analysis of the error of the approximation and of the non linear terms we solve the problem by a fixed point argument. Let us point out that the precise scaling of the parameters $\mu$ and $\lambda$ plays a fundamental role here.

Recalling the definitions given in \eqref{points}, \eqref{deltaeps} and \eqref{transBubbles}, the main result of this part can be stated as follows.

\begin{theorem}\label{mainThm}
Let $n\geq 4$, and let $k, h$ be positive integers so that $k=O(h)$. Then, for sufficiently large $k$ and $h$ there is a finite energy solution of the form
$$u_{k,h}(x)=U(x)-\sum_{j=1}^k U_{\mu,\xi_j}(x)-\sum_{l=1}^h U_{\lambda,\eta_l}(x)+o_k(1)+o_h(1),$$
where $o_k(1)$ and $o_h(1)$ denote quantities that tend to zero when $k$ and $h$ tend to infinity respectively.
\end{theorem}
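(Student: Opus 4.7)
The plan is to construct the correction $\phi$ via a Lyapunov--Schmidt reduction performed in the symmetry class $\mathcal{H}$ of functions which are invariant under rotations of angle $2\pi/k$ in the $(y_1,y_2)$-plane, rotations of angle $2\pi/h$ in the $(y_3,y_4)$-plane, even in each coordinate $y_\alpha$ with $\alpha\in\{2,4,5,\ldots,n\}$, and invariant under the Kelvin transform $y\mapsto y/|y|^2$. Setting
$$W:=U-\sum_{j=1}^{k}U_{\mu,\xi_j}-\sum_{l=1}^{h}U_{\lambda,\eta_l},$$
the ansatz $u=W+\phi$ turns the problem into
$$L_W(\phi)=-E-N(\phi),$$
where $L_W$ denotes the linearization \eqref{lin} with $u$ replaced by $W$, $E:=\Delta W+\gamma|W|^{p-1}W$ is the error of the approximation, and $N(\phi):=\gamma\bigl(|W+\phi|^{p-1}(W+\phi)-|W|^{p-1}W-p|W|^{p-1}\phi\bigr)$ collects the superlinear remainder.

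Step one is to control $E$ in the norm $\|\cdot\|_{**}$ from \eqref{normStarStar}. Away from the bubbles $E$ is negligible; near each small bubble $U_{\mu,\xi_j}$ one Taylor-expands $|W|^{p-1}W$ and extracts the interaction of $U$ with $U_{\mu,\xi_j}$, the pairwise interactions of neighbouring bubbles on the first circle, and the cross interactions with the second circle, and analogously near each $U_{\lambda,\eta_l}$. The identities \eqref{mod1} together with Kelvin invariance place each family of bubbles on an orbit of the inversion and precisely cancel the leading radial contribution of the $U$-vs-$U_{\mu,\xi_j}$ interaction, leaving a remainder of the same order as the neighbour-to-neighbour interaction, of size $\mu^{(n-2)/2}|\xi_j-\xi_{j+1}|^{-(n-2)}\sim\mu^{(n-2)/2}k^{n-2}$. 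The scalings $\mu\sim\delta^{2/(n-2)}k^{-2}$ and $\lambda\sim\varepsilon^{2/(n-2)}h^{-2}$ in \eqref{deltaeps} are exactly what balance these two competing contributions, and the assumption $k=O(h)$ keeps cross-circle interactions subdominant.

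Next I would develop the linear theory. Imposing the symmetry class $\mathcal{H}$ automatically annihilates almost every approximate-kernel direction of $L_W$: the translations, the between-plane rotations, the cross rotations from \eqref{z4}--\eqref{z5}, and the Kelvin-translation modes from \eqref{z3} are all incompatible with the prescribed invariances. Only the two dilation-type modes
$$Z_1:=\sum_{j=1}^{k}\left(\frac{n-2}{2}U_{\mu,\xi_j}+y\cdot\nabla U_{\mu,\xi_j}\right),\qquad Z_2:=\sum_{l=1}^{h}\left(\frac{n-2}{2}U_{\lambda,\eta_l}+y\cdot\nabla U_{\lambda,\eta_l}\right)$$
survive as genuine obstructions inside $\mathcal{H}$. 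The central estimate is the uniform a priori bound $\|\phi\|_{*}\lesssim\|L_W(\phi)\|_{**}$ for $\phi\in\mathcal{H}$ orthogonal, in the natural weighted $L^2$ pairing, to $Z_1$ and $Z_2$. It is proved by a two-scale blow-up/contradiction argument: any concentrating sequence, rescaled around either bubble scale, converges to a bounded, symmetric solution of $-\Delta\varphi=\gamma p\,U^{p-1}\varphi$ which is orthogonal to the dilation mode and hence vanishes by Rey's nondegeneracy of $U$ \cite{Rey}.

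With the linear theory in hand, the projected problem
$$L_W(\phi)=-E-N(\phi)+c_1\,\gamma p|W|^{p-1}Z_1+c_2\,\gamma p|W|^{p-1}Z_2$$
is solved for $\phi\in\mathcal{H}$ with $\|\phi\|_{*}=O(\|E\|_{**})$ by a contraction-mapping argument in a small ball of the $\|\cdot\|_*$-topology, valid for all sufficiently large $k$ and $h$ and exploiting the Lipschitz estimates on $N$ governed by the exponent $p-1$. It then remains to solve the reduced $2\times2$ bifurcation system $c_1(\delta,\varepsilon)=c_2(\delta,\varepsilon)=0$ in the compact range \eqref{deltaeps}: expanding the Lagrange multipliers in inverse powers of $k$ and $h$ produces a leading system with dominant diagonal blocks in $(\delta,\varepsilon)$ (by the different bubble scales and the $k=O(h)$ balance), so an admissible solution exists by the implicit function theorem. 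The main obstacle I expect is precisely the linear theory step, because the two independent indices $k,h$ now control two concentration scales simultaneously: the blow-up analysis must be carried out separately at each scale and combined carefully to show that exactly one surviving kernel direction persists at each scale, one to be neutralised by orthogonality against $Z_1$ and the other against $Z_2$.
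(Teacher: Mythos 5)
Your outline is a viable Lyapunov--Schmidt scheme and reaches the same reduced bifurcation system as the paper, but the route through the linear theory is genuinely different. The paper does not prove a single global a priori estimate for $L_W$ in the symmetry class; instead it glues: the correction is split as $\phi=\sum_j\ophi_j+\sum_l\hphi_l+\psi$ with cutoffs supported near each bubble, the outer piece $\psi$ is solved by contraction using the invertibility of $\Delta+p\gamma U^{p-1}$ on symmetric functions (no multiplier needed there, since full rotational invariance in both planes emerges and Kelvin anti-invariance of $Z_0$ kills the dilation), and each inner piece is solved as a projected problem with exactly one multiplier against the local dilation mode $\oZ_0$ (resp. $\hZ_0$). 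The reduced equations $\oc_0=\hc_0=0$ are then solved in $(\delta,\varepsilon)$, with the leading balance $\delta a^1_{n,k}=a^2_{n,k}$ exactly as you predict. Your global blow-up/compactness estimate would buy a shorter linear theory at the price of a more delicate limit analysis; the paper's gluing buys explicit, checkable orthogonality identities at the price of a coupled system.

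The one place where your argument is too quick --- and which you should not dismiss as automatic --- is the claim that the translation and rotation modes are ``incompatible with the prescribed invariances.'' At the concentration scale around $\xi_1$ the modes $\partial_{y_1}U_{\mu,\xi_1}$ (radial translation) and $\partial_{y_3}U_{\mu,\xi_1}$ are perfectly compatible pointwise with evenness in $y_2,y_4,\ldots,y_n$ and with the discrete rotations; they are not excluded from the symmetry class. What actually happens is that the corresponding orthogonality integrals vanish for symmetric right-hand sides: for $Z_3$ this is the content of the $J(t)$ rotation argument in Lemma \ref{proj1} (combining the discrete $(y_3,y_4)$-rotation invariance with the vanishing of the $Z_4$ integral from evenness), and for $Z_1$ it follows from the Kelvin invariance together with the constraint $|\xi_1|^2+\mu^2=1$, as in \cite[Lemma 4.2]{dPMPP}. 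In your blow-up formulation the analogous facts are that the rescaled limit inherits \emph{full} rotational invariance in the $(y_3,y_4)$-plane (because $2\pi\Z/h$ densifies as $h\to\infty$) and an exact limiting inversion symmetry tying the $Z_0$ and $Z_1$ components together, leaving precisely the one-dimensional obstruction you neutralise with your $Z_1$. Without making these two mechanisms explicit, the assertion that ``exactly one kernel direction persists at each scale'' is unsupported, and it is the crux of the whole construction; with them, your scheme closes.
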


\section{Error of the approximation}\label{error}
\noindent Denote
$$U_*(y):=U(y)-\sum_{j=1}^k U_{\mu,\xi_j}(y)-\sum_{l=1}^h U_{\lambda,\eta_l}(y),$$
and suppose that the solution $u$ we are looking for has the form
$$u=U_*+\phi,$$
where $\phi$ is a small function when compared with $U_*$. Then solving equation \eqref{prob} is equivalent to find $\phi$ such that
\begin{equation}\label{probPhi}
\Delta\phi+p\gamma |U_*|^{p-1}\phi +E+\gamma N(\phi)=0,
\end{equation}
where
\begin{equation*}\begin{split}
E&:=\Delta U_*+\gamma|U_*|^{p-1}U_*,\\
N(\phi)&:=|U_*+\phi|^{p-1}(U_*+\phi)-|U_*|^{p-1}U_*-p|U_*|^{p-1}\phi.
\end{split}\end{equation*}
In this section we try to estimate the error term $E$. In particular,
\begin{equation*}\begin{split}
\gamma^{-1}E=&\left|U-\sum_{j=1}^kU_{\mu,\xi_j}-\sum_{l=1}^h U_{\lambda,\eta_l}\right|^{p-1}\left(U-\sum_{j=1}^kU_{\mu,\xi_j}-\sum_{l=1}^h U_{\lambda,\eta_l}\right)\\
&-U^p-\sum_{j=1}^kU_{\mu,\xi_j}^p-\sum_{l=1}^h U_{\lambda,\eta_l}^p
\end{split}\end{equation*}
We divide the study of the error in three diffentent regions. Roughly speaking, we will estimate first the $\|\cdot\|_{**}$ norm of the error far from the points $\xi_j$ and $\eta_l$, then around $\xi_j$, and finally around $\eta_l$, for any $j=1,\ldots,k$ and $l=1,\ldots,h$. Indeed, let $\oalpha$ and $\halpha$ be positive numbers independent of $k$ and $h$.
\medskip

\noindent {\it Exterior region:} $y\in \{\cap_{j=1}^k\{|y-\xi_j|>\frac{\oalpha}{k}\}\}\cap\{\cap_{l=1}^h\{|y-\eta_l|>\frac{\halpha}{h}\}\}.$

\noindent For $y$ in this region we can estimate
\begin{equation}\begin{split}\label{errExtEst}
|E|\leq&\,C\left[\frac{1}{(1+|y|^2)^2}+\bigg|\sum_{j=1}^k\frac{\mu^{\frac{n-2}{2}}}{|y-\xi_j|^{n-2}}\bigg|^{\frac{4}{n-2}}+\bigg|\sum_{l=1}^h\frac{\lambda^{\frac{n-2}{2}}}{|y-\eta_l|^{n-2}}\bigg|^{\frac{4}{n-2}}\right]\\
&\,\,\cdot\left(\sum_{j=1}^k\frac{\mu^{\frac{n-2}{2}}}{|y-\xi_j|^{n-2}}+\sum_{l=1}^h\frac{\lambda^{\frac{n-2}{2}}}{|y-\eta_l|^{n-2}}\right)\\
\leq & \,\frac{C}{(1+|y|^2)^2}\left[\sum_{j=1}^k\frac{\mu^{\frac{n-2}{2}}}{|y-\xi_j|^{n-2}}+\sum_{l=1}^h\frac{\lambda^{\frac{n-2}{2}}}{|y-\eta_l|^{n-2}}\right],
\end{split}\end{equation}
where in the last inequality we have used that here
$$\sum_{j=1}^k\frac{\mu^{\frac{n-2}{2}}}{|y-\xi_j|^{n-2}}\leq C\left(1-\frac{k-1}{k^{n-2}}\right),\qquad \sum_{l=1}^h\frac{\lambda^{\frac{n-2}{2}}}{|y-\eta_l|^{n-2}} \leq C\left(1-\frac{h-1}{h^{n-2}}\right).$$
Thus
\begin{equation}\begin{split}\label{errExt}
\|(1+|y|)^{n+2-\frac{2n}{q}}E&\|_{L^q(\{\cap_{j=1}^k\{|y-\xi_j|>\frac{\oalpha}{k}\}\}\cap\{\cap_{l=1}^h\{|y-\eta_l|>\frac{\halpha}{h}\}\})}\\
&\leq C\left(\frac{\mu^{\frac{n-2}{2}}k^{n-2}}{k^{\frac{n}{q}-1}}\textcolor{black}{+\frac{\lambda^{\frac{n-2}{2}}h^{n-2}}{h^{\frac{n}{q}-1}}}\right)\\
&\leq C(k^{1-\frac{n}{q}}\textcolor{black}{+h^{1-\frac{n}{q}}}).
\end{split}\end{equation}
\medskip

\noindent {\it Interior regions around $\xi_j$}: $y\in \{|y-\xi_j|<\frac{\oalpha}{k}\}$ for some $j=1,\ldots,k$.

\noindent Let $j$ be fixed. For some $s\in (0,1)$ we have
\begin{equation*}\begin{split}
\gamma^{-1}E=&\,p(U_{\mu,\xi_j}+s(-\sum_{i\neq j} U_{\mu,\xi_i}+U-\sum_{l=1}^h U_{\lambda,\eta_l}))^{p-1}(-\sum_{i\neq j}U_{\mu,\xi_i}+U-\sum_{l=1}^h U_{\lambda,\eta_l})\\
&-U^p-\sum_{i\neq j}U_{\mu,\xi_i}^p-\sum_{l=1}^h U_{\lambda,\eta_l}^p.
\end{split}\end{equation*}
Let us define
$$\oE_j(y):=\mu^{\frac{n+2}{2}}E(\xi_j+\mu y), \qquad |y|<\frac{\oalpha}{\mu k}.$$
Thus,
\begin{equation}\begin{split}\label{ErrIntXi}
\gamma^{-1}\oE_j(y)=&\,p\left(-U(y)+s(-\sum_{i\neq j}U(y-\mu^{-1}(\xi_j-\xi_i))+\mu^{\frac{n-2}{2}}U(\xi_j+\mu y)\right.\\
&\left. -\sum_{l=1}^h\mu^{\frac{n-2}{2}}\lambda^{-\frac{n-2}{2}}U(\lambda^{-1}(\xi_j+\mu y-\eta_l)\right)^{p-1}(-\sum_{i\neq j}U(y-\mu^{-1}(\xi_j-\xi_i))\\
&+\mu^{\frac{n-2}{2}}U(\xi_j+\mu y)-\sum_{l=1}^h\mu^{\frac{n-2}{2}}\lambda^{-\frac{n-2}{2}}U(\lambda^{-1}(\xi_j+\mu y-\eta_l))\\
&-\mu^{\frac{n+2}{2}}U^p(\xi_j+\mu y)-\sum_{i\neq j} U^p(y-\mu^{-1}(\xi_i-\xi_j))\\
&-\sum_{l=1}^h\mu^{\frac{n+2}{2}}\lambda^{-\frac{n+2}{2}}U^p(\lambda^{-1}(\xi_j+\mu y-\eta_l),
\end{split}\end{equation}
and consequently
\begin{equation}\begin{split}\label{errIntEst}
|\overline{E}_j(y)|&\leq C\left[\frac{(k\mu)^{n-2}+(\mu\lambda)^{\frac{n-2}{2}}h}{1+|y|^4}+\mu^{\frac{n+2}{2}}+(\mu\lambda)^{\frac{n+2}{2}}h^{\frac{n+2}{n-2}}\right]\\
&\leq C\left[\frac{\mu^{\frac{n-2}{2}}}{1+|y|^4}+\mu^{\frac{n+2}{2}}\right].
\end{split}\end{equation}
Noticing that $h=O(k)$ we can compute
\begin{equation}\label{errInt1}
\|(1+|y|)^{n+2-\frac{2n}{q}}\overline{E}_j\|_{L^q(|y|<\frac{\oalpha}{\mu k})}\leq C\mu^{\frac{n}{2q}}\leq Ck^{-\frac{n}{q}}.
\end{equation}

\noindent {\it Interior regions around $\eta_l$}: $y\in \{|y-\eta_l|<\frac{\halpha}{h}\}$ for some $l=1,\ldots,h$.

\noindent The estimates in this region follow analogously to the previous case, but interchanging the role of $\mu$, $k$ and $\lambda$, $h$. Thus, considering
$$\hat{E}_l(y):=\lambda^{\frac{n+2}{2}}E(\eta_l+\mu y), \qquad |y|<\frac{\halpha}{\lambda h},$$
we get
\begin{equation}\begin{split}\label{intErrEst2}
|\hat{E}_l(y)|&\leq C\left[\frac{(h\lambda)^{n-2}+(\mu\lambda)^{\frac{n-2}{2}}k}{1+|y|^4}+\lambda^{\frac{n+2}{2}}+(\mu\lambda)^{\frac{n+2}{2}}k^{\frac{n+2}{n-2}}\right]\\
&\leq C\left[ \frac{\lambda^{\frac{n-2}{2}}}{1+|y|^4}+\lambda^{\frac{n+2}{2}}\right]
\end{split}\end{equation}
and therefore
\begin{equation}\label{errInt2}
\|(1+|y|)^{n+2-\frac{2n}{q}}\hat{E}_l\|_{L^q(|y|<\frac{\halpha}{\lambda h})}\leq C\lambda^{\frac{n}{2q}}\leq Ch^{-\frac{n}{q}}.
\end{equation}

\section{Building the solution}\label{sec3}
\noindent Recall from Section \ref{error} that to find a solution to \eqref{prob} we will prove the existence of a function $\phi$ that solves \eqref{probPhi}. We will try to build this function in a special form.

Let $\zeta(s)$ be a smooth function such that $\zeta(s)=1$ for $s>1$ and $\zeta(s)=0$ for $s>2$, and let $\overline{\alpha}, \hat{\alpha}>0$ be fixed numbers independent of $k$ and $h$. Define
\begin{equation*}\begin{split}
\overline{\zeta}_j(y)&:=\begin{cases}
\zeta(k\overline{\alpha}^{-1}|y|^{-2}|y-\xi_j|y|^2|)\hbox{ if }|y|>1,\\
\zeta(k\overline{\alpha}^{-1}|y-\xi_j|)\hbox{ if }|y|\leq 1,
\end{cases}\\
\hat{\zeta}_l(y)&:=\begin{cases}
\zeta(h\hat{\alpha}^{-1}|y|^{-2}|y-\eta_l|y|^2|)\hbox{ if }|y|>1,\\
\zeta(h\hat{\alpha}^{-1}|y-\eta_l|)\hbox{ if }|y|\leq 1.
\end{cases}\end{split}\end{equation*}
A function of the form
\begin{equation}\label{defPhi}
\phi=\sum_{j=1}^k\overline{\phi}_j+\sum_{l=1}^h\hat{\phi}_l+\psi
\end{equation}
is a solution of \eqref{probPhi} if we solve the system
\begin{equation}\label{system1}
\Delta \ophi_j+p\gamma |U_*|^{p-1}\ozeta_j\ophi_j+\ozeta_j\left[p\gamma |U_*|^{p-1}\psi+E+\gamma N(\phi)\right]=0, \; º;j=1,\ldots,k,
\end{equation}
\begin{equation}\label{system2}
\Delta \hphi_l+p\gamma |U_*|^{p-1}\hzeta_l\hphi_l+\hzeta_l\left[p\gamma |U_*|^{p-1}\psi+E+\gamma N(\phi)\right]=0, \; \;l=1,\ldots,h,
\end{equation}
\begin{equation}\label{system3}\begin{split}
\Delta\psi&+p\gamma U^{p-1}\psi+[p\gamma(|U_*|^{p-1}-U^{p-1})(1-\sum_{j=1}^k\ozeta_j-\sum_{l=1}^h\hzeta_l)\\
&+p\gamma U^{p-1}(\sum_{j=1}^k\ozeta_j+\sum_{l=1}^h\hzeta_l)]\psi+ p\gamma |U_*|^{p-1}\sum_{j=1}^k(1-\ozeta_j)\ophi_j\\
&+p\gamma |U_*|^{p-1}\sum_{l=1}^h(1-\hzeta_l)\hphi_l+(1-\sum_{j=1}^k\ozeta_j-\sum_{l=1}^h\hzeta_l)(E+\gamma N(\phi))=0.
\end{split}\end{equation}
We assume in addition the following symmetry properties on $\ophi_j$ and $\hphi_l$,
\begin{equation}\label{cond1}
\ophi_j(\overline{y},\hat{y},y')=\ophi_1(e^{-\frac{2\pi \textcolor{black}{(j-1)}}{k}i} \overline{y},\hat{y},y'),\qquad j=1,\ldots\,\textcolor{black}{k},
\end{equation}
where
\begin{equation}\begin{split}\label{cond2}
&\ophi_1(y_1,\ldots,y_j,\ldots,y_n)=\ophi_1(y_1,\ldots,-y_j,\ldots,y_n),\qquad j=2,4,5,\ldots,n,\\
&\ophi_1(y)=|y|^{2-n}\ophi_1(|y|^{-2}y),\\
&\ophi_1(\overline{y},\hat{y},y')=\ophi_1(\overline{y},e^{\frac{2\pi (l-1)}{h}i}\hat{y},y'),\qquad \l=1,\ldots,h,
\end{split}\end{equation}
and
\begin{equation}\label{cond3}
\hphi_j(\overline{y},\hat{y},y')=\hphi_1(\overline{y},e^{-\frac{2\pi (l-1)}{h}i} \hat{y},y'),\qquad l=1,\ldots\,h,
\end{equation}
where
\begin{equation}\begin{split}\label{cond4}
&\hphi_1(y_1,\ldots,y_j,\ldots,y_n)=\hphi_1(y_1,\ldots,-y_j,\ldots,y_n),\qquad j=2,4,5,\ldots,n,\\
&\hphi_1(y)=|y|^{2-n}\hphi_1(|y|^{-2}y),\\
&\hphi_1(\overline{y},\hat{y},y')=\hphi_1(e^{\frac{2\pi (j-1)}{k}i}\overline{y},\hat{y},y'),\qquad  j=1,\ldots,k.
\end{split}\end{equation}

\noindent Assume in addition that
\begin{equation}\begin{split}\label{cond5}
&\|\overline{\ophi}_1\|_*\leq \rho, \qquad \overline{\ophi}_1(y):=\mu^{\frac{n-2}{2}}\ophi_1(\xi_1+\mu y),\\
&\|\hat{\hphi}_1\|_*\leq \rho, \qquad \hat{\hphi}_1(y):=\lambda^{\frac{n-2}{2}}\hphi_1(\eta_1+\lambda y),
\end{split}\end{equation}
for $\rho>0$ small.

\begin{lemma}
There exist constants $k_0,h_0,C,\rho_0$ such that, for all $k\geq k_0$ and $h\geq h_0$, if $\ophi_j$, $j=1,\ldots,k$, and $\hphi_l$, $l=1,\ldots,h$ satisfy conditions \eqref{cond1}-\eqref{cond5} with $\rho<\rho_0$ then there exists a unique solution $\psi=\Psi(\oophi_1,\hhphi_1)$ to equation \eqref{system3}, that satisfies the symmetries
\begin{equation}\label{condpsi1}
\psi(y_1,\ldots,y_\alpha,\ldots)=\psi(y_1,\ldots,-y_\alpha,\ldots),\;\; \alpha=5,\ldots,n,\end{equation}
\begin{equation}\label{condpsi2}
\psi(\overline{y},\hat{y},y')=\psi(e^{\frac{2\pi (j-1)}{k}i}\overline{y},\hat{y},y'),\;\;  j=1,\ldots,k,
\end{equation}
\begin{equation}\label{condpsi3}
\psi(\overline{y},\hat{y},y')=\psi(\overline{y},e^{\frac{2\pi (l-1)}{h}i}\hat{y},y'),\;\;  l=1,\ldots,h,
\end{equation}
\begin{equation}\label{condpsi4}
\psi(y)=|y|^{2-n}\psi(|y|^{-2}y),
\end{equation}
and such that
\begin{equation}\label{opPsi}
\|\psi\|_*\leq C\left[\|\oophi_1\|_*+\|\hhphi_1\|_*+k^{1-\frac{n}{q}}+h^{1-\frac{n}{q}}\right].
\end{equation}
Moreover, the operator $\Psi$ satisfies
\begin{equation}\label{lipPsi}
\|\Psi(\oophi_1^1,\hhphi_1^1)-\Psi(\oophi_1^2,\hhphi_1^2)\|_*\leq C(\|\oophi_1^1-\oophi_1^2\|_*+\|\hhphi_1^1-\hhphi_1^2\|_*).
\end{equation}
\end{lemma}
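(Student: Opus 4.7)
\medskip
\noindent\textbf{Proof proposal.} The strategy is a standard Lyapunov--Schmidt-type fixed point argument for \eqref{system3}, but performed in the subspace of functions enjoying the symmetries \eqref{condpsi1}--\eqref{condpsi4}. The key observation is that the Kelvin-invariance \eqref{condpsi4}, together with the rotational and reflection symmetries, kills the entire $(n+1)$-dimensional kernel of the linearized operator $L_0\psi := \Delta\psi + p\gamma U^{p-1}\psi$ at the standard bubble: the dilation element $\tfrac{n-2}{2}U+y\cdot\nabla U$ is odd under Kelvin transform, and each translation derivative $\partial_{y_\alpha}U$ is ruled out by the combination of Kelvin-invariance with the rotational symmetries in the $(y_1,y_2)$ and $(y_3,y_4)$ planes and the reflection symmetries in $y_2,y_4,y_5,\ldots,y_n$. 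Hence no orthogonality conditions have to be imposed and the reduced equation can be solved directly, without introducing Lagrange multipliers.

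\medskip
I would first establish the linear theory: for every $F\in L^q_{\mathrm{loc}}(\mathbb{R}^n)$ satisfying the symmetries \eqref{condpsi1}--\eqref{condpsi4} and with $\|F\|_{**}<\infty$, the problem $L_0\psi=F$ admits a unique solution $\psi$ in the symmetric class with
\[
\|\psi\|_{*}\,\leq\, C\,\|F\|_{**}.
\]
This is proved by a blow-up/contradiction argument: assuming a sequence $\psi_m$ with $\|\psi_m\|_*=1$ and $\|F_m\|_{**}\to 0$, standard elliptic estimates combined with the decay in $\|\cdot\|_*$ produce a nontrivial bounded limit $\psi_\infty$ solving $L_0\psi_\infty=0$ with all the symmetries, which by the discussion above forces $\psi_\infty\equiv 0$, contradicting $\|\psi_m\|_*=1$ via the strong convergence provided by the weight $(1+|y|^{n-2})$.

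\medskip
Once the linear theory is in place, I rewrite \eqref{system3} schematically as
\[
L_0\psi \,=\, -V\psi \,-\, \Phi(\oophi_1,\hhphi_1) \,-\, \bigl(1-\!\sum_j\ozeta_j-\!\sum_l\hzeta_l\bigr)\bigl(E+\gamma N(\phi)\bigr),
\]
where $V:=p\gamma[(|U_*|^{p-1}-U^{p-1})(1-\sum\ozeta_j-\sum\hzeta_l)+U^{p-1}(\sum\ozeta_j+\sum\hzeta_l)]$ is a small potential and $\Phi$ collects the terms $p\gamma|U_*|^{p-1}\sum(1-\ozeta_j)\ophi_j+p\gamma|U_*|^{p-1}\sum(1-\hzeta_l)\hphi_l$. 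The potential $V$ can be estimated via $\|V\psi\|_{**}\leq o_{k,h}(1)\|\psi\|_{*}$, so $L_0+V$ remains invertible with the same norm bound. The cutoff-$\ophi_j,\hphi_l$ contributions produce a source of size $\lesssim\|\oophi_1\|_*+\|\hhphi_1\|_*$ after rescaling and using the fact that $(1-\ozeta_j)$ is supported away from $\xi_j$ where $\ophi_j$ is small in the $*$-norm; the error term contributes $O(k^{1-n/q}+h^{1-n/q})$ by \eqref{errExt}; and $N(\phi)$ is superlinear in $\phi$, so it is both small and Lipschitz-contracting on the ball $\{\|\psi\|_*\leq C[\|\oophi_1\|_*+\|\hhphi_1\|_*+k^{1-n/q}+h^{1-n/q}]\}$. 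The contraction mapping principle then yields existence, uniqueness, the estimate \eqref{opPsi}, and, by comparing the two fixed-point equations, the Lipschitz dependence \eqref{lipPsi}.

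\medskip
The main obstacle I anticipate is the linear invertibility step: verifying rigorously that the symmetries \eqref{condpsi1}--\eqref{condpsi4} eliminate the full $(n+1)$-dimensional kernel of $L_0$, and propagating the weighted $L^\infty$ estimate $\|\psi\|_*\le C\|F\|_{**}$ through the contradiction argument with the pair of inhomogeneous norms $(\|\cdot\|_{**},\|\cdot\|_*)$. The remaining estimates on $V$, $\Phi$ and $N(\phi)$ are standard and follow the pattern already used in \cite{dPMPP}, adapted here to the two concentration families indexed by $k$ and $h$.
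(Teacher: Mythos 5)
Your proposal is correct and follows the same overall architecture as the paper (rewrite \eqref{system3} as a fixed point problem for $L_0\psi=-V\psi-\Phi-M(\psi)$, invert the linear operator in the symmetric class, then contract), but it handles the linear step by a genuinely different route. The paper does not re-prove invertibility: it verifies directly that any right-hand side $h$ with the symmetries \eqref{condpsi1}--\eqref{condpsi3} and the Kelvin property $h(y)=|y|^{-n-2}h(|y|^{-2}y)$ automatically satisfies the compatibility conditions $\int_{\R^n}hZ_\alpha=0$ for all $\alpha=0,\ldots,n$ (evenness handles $\alpha\geq 5$, and the rotational/Kelvin structure handles $\alpha=0,\ldots,4$ as in \cite[Lemma 4.1]{dPMPP}), then invokes the known Fredholm result \cite[Lemma 3.1]{dPMPP} and recovers the symmetries of $\psi$ \emph{a posteriori} from uniqueness. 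You instead propose to show that the $(n+1)$-dimensional kernel of $L_0$ intersects the symmetric subspace trivially and to establish $\|\psi\|_*\leq C\|F\|_{**}$ by blow-up. This is a valid and self-contained alternative, and your identification of which symmetry kills which kernel element ($Z_0$ by Kelvin anti-invariance, $Z_1,Z_2$ and $Z_3,Z_4$ by the discrete rotations, $Z_5,\ldots,Z_n$ by evenness) is exactly right; but note that triviality of the kernel in the symmetric class does not by itself give existence --- by the Fredholm alternative, solvability of $L_0\psi=F$ still requires $F\perp\ker L_0$ in the full space, so you must either check the compatibility conditions anyway (which is what the paper does) or set up the argument in a $\mathcal{D}^{1,2}$-invariant-subspace framework where the restricted operator is self-adjoint Fredholm, so that trivial kernel implies surjectivity. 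With that point made precise, your argument goes through; the paper's route is shorter because it reuses the existing linear lemma, while yours is more self-contained at the cost of redoing the a priori estimate.
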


\begin{proof}
We write equation \eqref{system3} as
\begin{equation}\label{probPsi}
\Delta\psi+p\gamma U^{p-1}\psi+V(y)\psi+ p\gamma |U_*|^{p-1}(\sum_{j=1}^k(1-\ozeta_j)\ophi_j+\sum_{l=1}^h(1-\hzeta_l)\hphi_l)+M(\psi)=0,
\end{equation}
where
\begin{equation*}\begin{split}
V(y)&:=p\gamma(|U_*|^{p-1}-U^{p-1})(1-\sum_{j=1}^k\ozeta_j-\sum_{l=1}^h\hzeta_l)+p\gamma U^{p-1}(\sum_{j=1}^k\ozeta_j+\sum_{l=1}^h\hzeta_l),\\
&=:V_1(y)+V_2(y),
\end{split}\end{equation*}
and
$$M(\psi):=(1-\sum_{j=1}^k\ozeta_j-\sum_{l=1}^h\hzeta_l)(E+\gamma N(\phi))=0.$$
Consider first the problem
\begin{equation}\label{linh}
\Delta\psi+p\gamma U^{p-1}\psi=h,
\end{equation}
where $h$ is a function satisfying \eqref{condpsi1}-\eqref{condpsi3}, $\|h\|_{**}<+\infty$ and
\begin{equation}\label{propH}
h(y)=|y|^{-n-2}h(|y|^{-2}y).
\end{equation}
Let
\begin{equation}\label{defZ}
Z_\alpha:=\partial_{y_\alpha}U,\;\;\alpha=1,\cdots,n\;\;\hbox{and}\;\;Z_{0}=y\cdot\nabla U+\frac{n-2}{2}U.
\end{equation}
Due to the oddness of $Z_\alpha$ and assumption \eqref{condpsi1} on $h$ it yields
$$\int_{\mathbb{R}^n}Z_\alpha h=0\hbox{ for all }\alpha=5,\cdots,n.$$
The cases $\alpha=0,1,2,3,4$ also vanish proceeding as in the proof of \cite[Lemma 4.1]{dPMPP} as a consequence of \eqref{condpsi2}-\eqref{condpsi3}. Thus we can apply the linear existence result \cite[Lemma 3.1]{dPMPP} to ensure the existence of a unique solution $\psi$ to \eqref{linh} such that
$$\int_{\mathbb{R}^n}U^{p-1}Z_\alpha\psi=0\hbox{ for all }\alpha=0,1,\ldots,n,$$
and $\|\psi\|_*\leq C\|h\|_{**}$. Notice in addition that the functions
\begin{equation*}\begin{split}
\psi_\alpha(y)&:=\psi(\overline{y},\hat{y},\ldots,-y_i,\ldots,y_n),\;\; \alpha=5,\cdots,n,\\
\psi_{12j}(y)&:=\psi(e^{\frac{2\pi (j-1)}{k}i}\overline{y},\hat{y},y'),\;\;  j=1,\ldots,k,\\
\psi_{34l}(y)&:=\psi(\overline{y},e^{\frac{2\pi (l-1)}{h}i}\hat{y},y'),\;\; l=1,\ldots,h,\\
\psi_{n+1}(y)&:=|y|^{2-n}\psi(|y|^{-2}y),
\end{split}\end{equation*}
also satisfy \eqref{linh} and thus, by the uniqueness,
$$\psi=\psi_\alpha=\psi_{12j}=\psi_{34l}=\psi_{n+1}$$
for all $\alpha=5,\ldots,n$, $j=1,\ldots,k$, $l=1,\ldots,h$, i.e., $\psi$ satisfies \eqref{condpsi1}-\eqref{condpsi4}. Therefore, \eqref{linh} has a unique bounded solution $\psi=T(h)$ satisfying symmetries \eqref{condpsi1}-\eqref{condpsi4} and
$$\|\psi\|_*\leq C\|h\|_{**}$$
for a constant depending only on $q$ and $n$.

We will solve \eqref{probPsi} by means of a fixed point argument, writting
$$\psi=-T(V\psi+ p\gamma |U_*|^{p-1}(\sum_{j=1}^k(1-\ozeta_j)\ophi_j+\sum_{l=1}^h(1-\hzeta_l)\hphi_l)+M(\psi))=:\mathcal{M}(\psi),$$
$\psi\in X,$ where $X$ is the space of continuous functions $\psi$ with $\|\psi\|_*<+\infty$ satisfying \eqref{condpsi1}-\eqref{condpsi4}. Thanks to the special form of $U_*$ and to the symmetry assumptions on $\ophi_j$ and $\hphi_l$,
$$V\psi+ p\gamma |U_*|^{p-1}(\sum_{j=1}^k(1-\ozeta_j)\ophi_j+\sum_{l=1}^h(1-\hzeta_l)\hphi_l)+M(\psi)$$
satisfies \eqref{condpsi1}-\eqref{condpsi3} and \eqref{propH} if $\psi\in X$, and $\mathcal{M}$ is well defined. Actually, we claim that $\mathcal{M}$ is a contraction mapping in the $\| \|_*$ norm in a small ball around the origin in $X$. Indeed,
\begin{equation*}\begin{split}
|V_1(y)|&\leq \gamma p(p-1)\bigg|U-s(\sum_{j=1}^k U_{\mu,\xi_j}+\sum_{l=1}^h U_{\lambda,\eta_l})\bigg|^{p-2}\left(\sum_{j=1}^k U_{\mu,\xi_j}+\sum_{l=1}^h U_{\lambda,\eta_l}\right)\\
&\leq CU^{p-2}(y)\left(\sum_{j=1}^k\frac{\mu^{\frac{n-2}{2}}}{|y-\xi_j|^{n-2}}+\sum_{l=1}^h\frac{\lambda^{\frac{n-2}{2}}}{|y-\eta_l|^{n-2}}\right),
\end{split}\end{equation*}
and thus
$$|V_1\psi(y)|\leq C\|\psi\|_*U^{p-1}(y)\left(\sum_{j=1}^k\frac{\mu^{\frac{n-2}{2}}}{|y-\xi_j|^{n-2}}+\sum_{l=1}^h\frac{\lambda^{\frac{n-2}{2}}}{|y-\eta_l|^{n-2}}\right).$$
Proceeding as in \eqref{errExt} we get
\begin{equation}\label{V1}
\|V_1\psi\|_{**}\leq C\|\psi\|_*(k^{1-\frac{n}{q}}+h^{1-\frac{n}{q}}).
\end{equation}
On the other hand,
\begin{equation}\begin{split}\label{V2}
\|V_2\psi\|_{**}&\leq \sum_{j=1}^k\|pU^{p-1}\psi\ozeta_j\|_{**}+\sum_{l=1}^h\|pU^{p-1}\psi\hzeta_l\|_{**}\\
&\leq C\|\psi\|_*(k^{1-\frac{n}{q}}+h^{1-\frac{n}{q}}),
\end{split}\end{equation}
and putting together \eqref{V1} and \eqref{V2} we conclude
\begin{equation}\label{V}
\|V\psi\|_{**}\leq C\|\psi\|_*(k^{1-\frac{n}{q}}+h^{1-\frac{n}{q}}).
\end{equation}
Assume $|y-\xi_j|>\frac{\oalpha}{2k}$ and $|y-\eta_l|>\frac{\halpha}{2h}$ for all $j$ and $l$. We knew that in this region
\begin{equation*}
\|E\|_{**}\leq C(k^{1-\frac{n}{q}}+h^{1-\frac{n}{q}}).
\end{equation*}
Moreover,
$$\bigg|N\left(\sum_{j=1}^k\ophi_j+\sum_{l=1}^h\hphi_l+\psi\right)\bigg|\leq C U^{p-2}\left(\bigg|\sum_{j=1}^k\ophi_j\bigg|^2+\bigg|\sum_{l=1}^h\hphi_l\bigg|^2+|\psi|^2\right).$$
From \eqref{cond5}, we get
$$
|\ophi_j(y)| \leq C \|\oophi_1\|_*\frac{\mu^\frac{n-2}{2}}{\mu^{n-2} + |y-\xi_j|^{n-2}} , \quad
|\hphi_l(y)| \leq C\|\hhphi_1\|_*\frac{\lambda^\frac{n-2}{2}}{\lambda^{n-2} + |y-\eta_l|^{n-2}}.
$$
Moreover,
$$U^{p-2}|\psi|^2\leq U^p\|\psi\|_*^2.$$
Hence, proceeding again as in \eqref{errExt}, we obtain
\begin{equation}\label{M}
\|M(\psi)\|_{**}\leq Ck^{1-\frac{n}{q}}(1+\|\oophi_1\|_*^2)+Ch^{1-\frac{n}{q}}(1+\|\hhphi_1\|_*^2)+C\|\psi\|_*^2.
\end{equation}
Likewise, if $|y-\xi_j|>\frac{\oalpha}{2k}$ and $|y-\eta_l|>\frac{\halpha}{2h}$,
\begin{equation}\label{Ustar}
\||U_*|^{p-1}(\sum_{j=1}^k\ophi_j+\sum_{l=1}^h\hphi_l)\|_{**}\leq Ck^{1-\frac{n}{q}}\|\oophi_1\|_*+Ch^{1-\frac{n}{q}}\|\hhphi_1\|_*.
\end{equation}
Moreover, for $\psi_1,\psi_2$ satisfying $\|\psi_1\|<\rho$, $\|\psi_2\|<\rho$ it follows
$$\|M(\psi_1)-M(\psi_2)\|_{**}\leq C\rho \|\psi_1-\psi_2\|_*.$$
Joining \eqref{V}, \eqref{M} and \eqref{Ustar}, we see that for $\rho$ small enough the operator $\mathcal{M}$ defines a contraction map in the set of functions $\psi\in X$ with
$$\|\psi\|_*\leq C[\|\oophi_1\|_*+k^{1-\frac{n}{q}}+\|\hhphi_1\|_*+h^{1-\frac{n}{q}}],\;\;\|\overline{\ophi}_1\|_*<\rho,\;\;\|\hat{\hphi}_1\|_*<\rho,$$
for $\rho$ small. Therefore, there exists a solution of \eqref{probPsi} satisfying conditions \eqref{condpsi1}-\eqref{opPsi}. The Lipschitz condition \eqref{lipPsi} easily follows.
\end{proof}

%Let us define the vector integrals
%\begin{equation*}\begin{split}
%&I:=\int_{\mathbb{R}^n}h\left[\begin{array}{c}Z_1\\Z_2\end{array}\right]\,dy=c_n\int_{\mathbb{R}^n}\frac{h(y)}{(1+|y|^2)^{n/2}}\overline{y}\,dy,\\
%&J:=\int_{\mathbb{R}^n}h\left[\begin{array}{c}Z_3\\Z_4\end{array}\right]\,dy=c_n\int_{\mathbb{R}^n}\frac{h(y)}{(1+|y|^2)^{n/2}}\hat{y}\,dy.
%\end{split}\end{equation*}
%Performing respectively the change of variables $\overline{y}=e^{\frac{2\pi i}{k}}\overline{z}$ and $\hat{y}=e^{\frac{2\pi i}{h}}\hat{z}$ and using that $k,h\geq 2$ and conditions \eqref{condpsi2} and \eqref{condpsi3} we conclude that $I=J=0$.

Consider the operator $\Psi(\oophi_1,\hhphi_1)$. Equations \eqref{system1} and \eqref{system2} reduce to solve one of each, for example for $\ophi_1$ and $\hphi_1$.

We try to solve first
\begin{equation*}
\Delta \ophi_1+p\gamma |U_*|^{p-1}\ozeta_1\ophi_1+\ozeta_1\left[p\gamma |U_*|^{p-1}\Psi(\oophi_1,\hhphi_1)+E+\gamma N(\phi)\right]=0\hbox{ in }\mathbb{R}^n,
\end{equation*}
or equivalently,
\begin{equation}\label{eq1}
\Delta \ophi_1+p\gamma |U_{\mu,\xi_1}|^{p-1}\ophi_1+\ozeta_1 E+\gamma\overline{\mathcal{N}}(\overline{\ophi}_1,\hat{\hphi}_1)=0,
\end{equation}
where
$$\overline{\mathcal{N}}(\overline{\ophi}_1,\hat{\hphi}_1):=p(|U_*|^{p-1}\ozeta_1-|U_{\mu,\xi_1}|^{p-1})\ophi_1+\ozeta_1\left[p|U_*|^{p-1}\Psi(\oophi_1,\hhphi_1)+N(\phi)\right].$$
Consider first a general function $\overline{h}$ and the problem
\begin{equation}\label{projProb}
\Delta\ophi+p\gamma U_{\mu,\xi_1}^{p-1}\ophi+\overline{h}=\overline{c}_{0}U_{\mu,\xi_1}^{p-1}\overline{Z}_{0}\hbox{ in }\mathbb{R}^n,
\end{equation}
where
$$\overline{Z}_{0}(y):=\mu^{-\frac{n-2}{2}}Z_{0}\left(\frac{y-\xi_1}{\mu}\right),\qquad \overline{c}_{0}:=\frac{\int_{\mathbb{R}^n}\overline{h}\,\overline{Z}_{0}}{\int_{\mathbb{R}^n}U_{\mu,\xi_1}^{p-1}\overline{Z}_{0}^2},$$
with $Z_0$ defined in \eqref{defZ}.
\begin{lemma}\label{proj1}
Suppose that $\overline{h}$ is even with respect to each of the variables $y_2,y_4,y_5,\ldots,y_n$ and such that
\begin{equation}\label{hypRot}
\overline{h}(y)=|y|^{-n-2}\oh(|y|^{-2}y),\qquad \oh(y)=\oh(\overline{y},e^{\frac{2\pi(l-1)}{h}}\hat{y},y'), \;\;l=1,\ldots,h.
\end{equation}
Assume that
$$h(y):=\mu^{\frac{n+2}{2}}\overline{h}(\xi_1+\mu y)$$
satisfies $\|h\|_{**}<+\infty$. Then problem \eqref{projProb} has a unique solution $\ophi:=\overline{T}(\overline{h})$ that is even with respect to the variables $y_2,y_4,y_5,\ldots,y_n$, invariant under Kelvin's tranform, i.e.,
$$\ophi(y)=|y|^{2-n}\ophi(|y|^{-2}y),$$
and with $\oophi(y):=\mu^{\frac{n-2}{2}}\ophi(\xi_1+\mu y)$ satisfying
$$\int_{\mathbb{R}^n}\oophi U^{p-1}Z_{0}=0,\qquad \|\oophi\|_{*}\leq C\|h\|_{**}.$$
\end{lemma}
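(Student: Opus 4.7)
The plan is to rescale the problem around $\xi_1$ so it becomes the standard linearized Yamabe equation on $\R^n$, apply the invertibility result of \cite[Lemma 3.1]{dPMPP}, and recover the symmetries by uniqueness. Setting $z=(y-\xi_1)/\mu$ and $\oophi(z)=\mu^{(n-2)/2}\ophi(\xi_1+\mu z)$, the identities $U_{\mu,\xi_1}^{p-1}(y)=\mu^{-2}U^{p-1}(z)$ and $\overline{Z}_0(y)=\mu^{-(n-2)/2}Z_0(z)$ turn \eqref{projProb} into
\begin{equation*}
\Delta \oophi + p\gamma U^{p-1}\oophi + h = \overline{c}_0\, U^{p-1}Z_0 \quad\text{in } \R^n,
\end{equation*}
where $h(z):=\mu^{(n+2)/2}\overline{h}(\xi_1+\mu z)$ has $\|h\|_{**}$ finite by hypothesis and $\overline c_0$ continues to be given by the ratio of inner products involving $Z_0$. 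The linear theory applied in \cite{dPMPP} solves such an equation uniquely provided the right-hand side $\overline c_0 U^{p-1}Z_0-h$ is $L^2(U^{p-1})$-orthogonal to every element of the kernel $\{Z_0,Z_1,\ldots,Z_n\}$ of the linearization at $U$.

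The choice of $\overline c_0$ enforces orthogonality in the $Z_0$ direction by definition; the remaining orthogonalities must come from the symmetries assumed on $\overline{h}$. For $\alpha\in\{2,4,5,\ldots,n\}$ one has $(\xi_1)_\alpha=0$, so the evenness of $\overline{h}$ in $y_\alpha$ transfers verbatim to evenness of $h$ in $z_\alpha$; since $Z_\alpha=\partial_{z_\alpha}U$ is odd in $z_\alpha$, $\int h\,Z_\alpha=0$. For $\alpha\in\{3,4\}$, again $(\xi_1)_\alpha=0$, and the $h$-fold rotational invariance of $\overline{h}$ in the $(y_3,y_4)$-plane makes the Fourier series of $h$ in the angular variable supported on multiples of $h\ge 2$, whereas $Z_3,Z_4$ are exactly first harmonics in $(z_3,z_4)$; hence $\int h\,Z_{3,4}=0$. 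For $\alpha=1$ the Kelvin hypothesis $\overline{h}(y)=|y|^{-n-2}\overline{h}(|y|^{-2}y)$ is the decisive ingredient: the relation $|\xi_1|^2+\mu^2=1$ yields the key identity $\mu^2+\bigl||y|^{-2}y-\xi_1\bigr|^2=|y|^{-2}(\mu^2+|y-\xi_1|^2)$ from which a direct computation gives
\begin{equation*}
K[\overline{Z}_1]=(2\mu^2-1)\overline{Z}_1-2\mu(\xi_1)_1\overline{Z}_0, \qquad K[\overline{Z}_0]=(1-2\mu^2)\overline{Z}_0-2\mu(\xi_1)_1\overline{Z}_1,
\end{equation*}
where $K$ denotes Kelvin's transform. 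Using $\int\overline{h}\,\overline{Z}_\alpha=\int\overline{h}\,K[\overline{Z}_\alpha]$ (which is the weighted self-adjointness consequence of $K_p[\overline{h}]=\overline{h}$), one obtains the single linear relation $(\xi_1)_1\int\overline{h}\,\overline{Z}_1=-\mu\int\overline{h}\,\overline{Z}_0$. Restricting from the outset to the Kelvin-invariant functional class, the two-dimensional mode $\mathrm{span}\{\overline{Z}_0,\overline{Z}_1\}$ collapses under $K$ to its $+1$-eigenline, so the $Z_1$ orthogonality is automatically implied by the $Z_0$ orthogonality; in other words, the single Lagrange multiplier $\overline c_0$ suffices inside this class.

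With all solvability conditions verified, \cite[Lemma 3.1]{dPMPP} produces the unique $\oophi$ with $\int \oophi\,U^{p-1}Z_0=0$ and $\|\oophi\|_*\le C\|h\|_{**}$, and undoing the rescaling gives $\ophi$ with the stated bound. The evenness of $\ophi$ in $y_2,y_4,y_5,\ldots,y_n$ and its Kelvin invariance are then inherited from $\overline{h}$ by the usual uniqueness argument: each such symmetry is an isometry under which the linear operator, the inhomogeneity, the projector $U_{\mu,\xi_1}^{p-1}\overline{Z}_0$, and the orthogonality constraint are invariant, so applying it to $\ophi$ yields another admissible solution of the same problem and uniqueness forces equality. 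The principal delicate step is the $\alpha=1$ projection: without the precise geometric balance $|\xi_1|^2+\mu^2=1$, the Kelvin transform would not map $U_{\mu,\xi_1}$ to itself and the $\{\overline{Z}_0,\overline{Z}_1\}$ mixing could not be absorbed into a single multiplier. This is the step I expect to require the most careful verification in writing the full proof.
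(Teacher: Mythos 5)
Your proposal is correct and follows essentially the same route as the paper: rescale around $\xi_1$, verify the orthogonality conditions $\int h Z_\alpha=0$ using evenness for $\alpha\in\{2,4,5,\ldots,n\}$, the discrete rotational invariance in the $(y_3,y_4)$-plane for $\alpha=3$ (your Fourier-mode phrasing is equivalent to the paper's computation with $J(t)=\int w_\mu(y-\xi_1+te_3)\overline{h}\,dy$), and the Kelvin invariance together with $|\xi_1|^2+\mu^2=1$ for $\alpha=1$ (which the paper handles by first reducing to $\overline{c}_{0}=0$ and citing \cite[Lemma 4.2]{dPMPP} --- note that your statement that the $Z_1$-orthogonality follows from the $Z_0$-orthogonality really presupposes this same reduction to $\int\overline{h}\,\overline{Z}_{0}=0$, since the choice of the multiplier $\overline{c}_{0}$ alone does not force $\int h Z_0=0$). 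Existence and the bound then come from \cite[Lemma 3.1]{dPMPP}, and the symmetries from uniqueness, exactly as in the paper.
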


\begin{proof}
We assume with no loss of generality that
$$\int_{\mathbb{R}^n}\oh\,\overline{Z}_{0}=0,\qquad \hbox{ i.e., }\qquad\overline{c}_{0}=0.$$
Thus, equation \eqref{projProb} is equivalent to
$$\Delta \oophi+p\gamma|U|^{p-1}\oophi=-h \qquad \hbox{in }\mathbb{R}^n.$$
Due to the evenness of $h$ we know that
\begin{equation}\label{vanZ}
\int_{\mathbb{R}^n}h Z_\alpha=0, \qquad \alpha=2,4,5,\cdots,n.
\end{equation}
The proof of
$$\int_{\mathbb{R}^n}h Z_1=0$$
follows exactly as in the proof of \cite[Lemma 4.2]{dPMPP}, so we focus on the case $\alpha=3$.

Indeed, denote by
$w_{\mu}(y):=\mu^{-\frac{n-2}{2}}U(\mu^{-1}y)$, and $J(t):=\int_{\mathbb{R}^n}w_{\mu}(y-\xi_1+te_3)\oh(y)\,dy $.
Notice first that
\begin{equation}\label{der1}
\frac{d}{dt} J(t)\bigg|_{t=0}=\int_{\mathbb{R}^n}\partial_{y_3}w_{\mu}(y-\xi_1)\oh(y)\,dy=\int_{\mathbb{R}^n}hZ_3.
\end{equation}
On the other hand, defining $\tilde{y}:=(\overline{y},e^{\frac{2\pi(l-1)}{h}}\hat{y},y')$ for some $l=2,3,\ldots,h$, it can be checked that
$$|\tilde{y}-\xi_1+te_3|^2=|y-\xi_1+t\tilde{e}|^2,\qquad t\in\R^n,$$
where $\tilde{e}:=(0,0,\cos(\frac{2\pi(l-1)}{h}),-\sin(\frac{2\pi(l-1)}{h}),0,\ldots)$. Thus, after a change of variables, by \eqref{hypRot},
\begin{equation*}\begin{split}
J(t)&=\int_{\mathbb{R}^n}w_{\mu}(\tilde{y}-\xi_1+te_3)\oh(\tilde{y})\,d\tilde{y}
=\int_{\mathbb{R}^n}w_{\mu}(y-\xi_1+t\tilde{e})\oh(y)\,dy.
\end{split}\end{equation*}
Differentiating here,
\begin{equation*}\begin{split}
\frac{d}{dt}J(t)\bigg|_{t=0}&=\cos\left(\frac{2\pi(l-1)}{h}\right)\int_{\mathbb{R}^n}\partial_{y_3}w_{\mu}(y-\xi_1)\oh(y)\,dy\\
&\;\;\;-\sin\left(\frac{2\pi(l-1)}{h}\right)\int_{\mathbb{R}^n}\partial_{y_4}w_{\mu}(y-\xi_1)\oh(y)\,dy\\
&=\cos\left(\frac{2\pi(l-1)}{h}\right)\int_{\mathbb{R}^n}hZ_3\,dy-\sin\left(\frac{2\pi(l-1)}{h}\right)\int_{\mathbb{R}^n}hZ_4\,dy.
\end{split}\end{equation*}
Applying \eqref{vanZ} and \eqref{der1} we conclude that necessarily
$\int_{\mathbb{R}^n}h Z_3=0$.
Thus, by \cite[Lemma 3.1]{dPMPP} there exists a unique solution $\oophi$ satisfying
$$\|\oophi\|_*\leq C\|h\|_{**},\qquad \int_{\mathbb{R}^n}\oophi U^{p-1}Z_{n+1}=0.$$
The invariance under Kelvin transform and the symmetries are obtained as a consequence of the uniqueness.
\end{proof}

\noindent Likewise, we rewrite
\begin{equation*}
\Delta \hphi_1+p\gamma |U_*|^{p-1}\hzeta_1\hphi_1+\hzeta_1\left[p\gamma |U_*|^{p-1}\Psi(\oophi_1,\hhphi_1)+E+\gamma N(\phi)\right]=0\hbox{ in }\mathbb{R}^n,
\end{equation*}
as
\begin{equation}\label{eq2}
\Delta \hphi_1+p\gamma |U_{\lambda,\eta_1}|^{p-1}\hphi_1+\hzeta_1 E+\gamma\hat{\mathcal{N}}(\overline{\ophi}_1,\hat{\hphi}_1)=0,
\end{equation}
with
$$\hat{\mathcal{N}}(\overline{\ophi}_1,\hat{\hphi}_1):=p(|U_*|^{p-1}\hzeta_1-|U_{\lambda,\eta_1}|^{p-1})\hphi_1+\hzeta_1\left[p|U_*|^{p-1}\Psi(\oophi_1,\hhphi_1)+N(\phi)\right],$$
and we consider the problem
\begin{equation}\label{projProb2}
\Delta\hphi+p\gamma U_{\lambda,\eta_1}^{p-1}\hphi+\hat{h}=\hat{c}_{0}U_{\lambda,\eta_1}^{p-1}\hat{Z}_{0}\hbox{ in }\mathbb{R}^n,
\end{equation}
where $\hat{h}$ is a general function and
$$\hat{Z}_{0}(y):=\lambda^{-\frac{n-2}{2}}Z_{0}\left(\frac{y-\eta_1}{\lambda}\right),\qquad \hat{c}_{0}:=\frac{\int_{\mathbb{R}^n}\hat{h}\hat{Z}_{0}}{\int_{\mathbb{R}^n}U_{\lambda,\eta_1}^{p-1}\hat{Z}_{0}^2}.$$
\begin{lemma}\label{proj2}
Suppose that $\hat{h}$ is even with respect to each of the variables $y_2,y_4,y_5,\ldots,y_n$ and such that
$$\hat{h}(y)=|y|^{-n-2}\hh(|y|^{-2}y),\qquad \hat{h}(y)=\hat{h}(e^{\frac{2\pi(j-1)}{k}}\overline{y},\hat{y},y'),\;\;j=1,\ldots,k.$$
Assume that
$$h(y):=\lambda^{\frac{n+2}{2}}\hat{h}(\eta_1+\lambda y)$$
satisfies $\|h\|_{**}<+\infty$. Then problem \eqref{projProb2} has a unique solution $\hphi:=\hat{T}(\hat{h})$ that is even with respect to the variables $y_2,y_4,y_5,\ldots,y_n$, invariant under Kelvin's tranform, i.e.,
$$\hphi(y)=|y|^{2-n}\hphi(|y|^{-2}y),$$
and with $\hhphi(y):=\lambda^{\frac{n-2}{2}}\hphi(\eta_1+\lambda y)$ satisfying
$$\int_{\mathbb{R}^n}\hhphi U^{p-1}Z_{0}=0,\qquad \|\hhphi\|_{*}\leq C\|h\|_{**}.$$
\end{lemma}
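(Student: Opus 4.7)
The plan is to mirror the proof of Lemma \ref{proj1}, exchanging the roles of the parameters $(\mu,\xi_1,k)$ and $(\lambda,\eta_1,h)$, and of the planes $(y_1,y_2)$ and $(y_3,y_4)$. As in that argument, without loss of generality I reduce to $\hat c_{0}=0$ and rescale by setting $\hhphi(y):=\lambda^{\frac{n-2}{2}}\hphi(\eta_1+\lambda y)$, so that \eqref{projProb2} becomes
$$\Delta\hhphi+p\gamma U^{p-1}\hhphi=-h\qquad\hbox{ in }\mathbb{R}^n.$$
The result will then follow from \cite[Lemma 3.1]{dPMPP}, provided I can verify the orthogonality relations $\int_{\mathbb{R}^n}h\,Z_\alpha=0$ for every $\alpha=0,1,\dots,n$, where the $Z_\alpha$ are as in \eqref{defZ}.

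The conditions are dispatched just as in Lemma \ref{proj1}, but with the indices permuted. The case $\alpha=0$ is encoded in the reduction $\hat c_{0}=0$. The cases $\alpha\in\{2,4,5,\dots,n\}$ follow from the hypothesis that $\hat h$ is even in each $y_\alpha$, since $Z_\alpha=\partial_{y_\alpha}U$ is odd in that variable. The direction $\alpha=3$ is now the one tangent to the axis on which $\eta_1$ sits, so it plays here the role that $\alpha=1$ played in Lemma \ref{proj1}; it is handled exactly as in \cite[Lemma 4.2]{dPMPP}, exploiting the Kelvin invariance of $\hat h$ together with the sphere identity $|\eta_1|^2+\lambda^2=1$ coming from \eqref{mod1}.

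The only case requiring a genuinely new argument is $\alpha=1$, which is the analogue of the case $\alpha=3$ treated explicitly in Lemma \ref{proj1}, now driven by the $k$-fold symmetry of $\hat h$ in the $\overline y$ plane. I set $w_\lambda(y):=\lambda^{-\frac{n-2}{2}}U(\lambda^{-1}y)$ and $J(t):=\int_{\mathbb{R}^n}w_\lambda(y-\eta_1+te_1)\hat h(y)\,dy$, so that after rescaling $J'(0)=\int_{\mathbb{R}^n}h\,Z_1$. The change of variables $\tilde y:=(e^{\frac{2\pi(j-1)}{k}i}\overline y,\hat y,y')$ is an isometry of $\mathbb{R}^n$ that fixes $\eta_1$ (because $\overline{\eta_1}=0$), and replaces $e_1$ by $\tilde e:=(\cos\theta_j,-\sin\theta_j,0,\dots,0)$ with $\theta_j:=\frac{2\pi(j-1)}{k}$. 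Using the $k$-fold symmetry $\hat h(\tilde y)=\hat h(y)$ and differentiating $J$ at $t=0$ I obtain
$$\int_{\mathbb{R}^n}h\,Z_1=\cos\theta_j\int_{\mathbb{R}^n}h\,Z_1-\sin\theta_j\int_{\mathbb{R}^n}h\,Z_2,$$
and choosing $j=2$ together with the already established $\int h\,Z_2=0$ forces $\int h\,Z_1=0$.

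Once every orthogonality condition is in place, \cite[Lemma 3.1]{dPMPP} yields a unique bounded solution satisfying $\|\hhphi\|_*\leq C\|h\|_{**}$ and $\int\hhphi\,U^{p-1}Z_{0}=0$. The Kelvin invariance and the remaining symmetries of $\hphi$ then follow from the uniqueness clause of \cite[Lemma 3.1]{dPMPP} applied to $\hhphi$ composed with each symmetry of the data, exactly as at the end of the proof of Lemma \ref{proj1}. I do not anticipate any genuine obstacle here: the construction is the structural twin of Lemma \ref{proj1}, and the only delicate orthogonality (namely $\int h\,Z_1=0$) is resolved by the same rotation trick that proved the $Z_3$ orthogonality there.
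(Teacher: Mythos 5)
Your proposal is correct and is precisely the argument the paper intends: the authors explicitly skip the proof of Lemma \ref{proj2}, stating it is the proof of Lemma \ref{proj1} with the roles of $(\mu,\xi_1,k)$ and $(\lambda,\eta_1,h)$ interchanged, which is exactly what you carry out (including the correct identification that the rotation trick now handles $\alpha=1$ via the $k$-fold symmetry in the $(y_1,y_2)$ plane, while $\alpha=3$ is the radial-type direction handled as in \cite[Lemma 4.2]{dPMPP}).
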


The proof of this result is analogous to the one for Lemma \ref{proj1}, interchanging the roles of $\mu$ and $\xi_1$  with $\lambda$ and $\eta_1$, so we skip it.
\medskip

We use these lemmas to solve the projected versions of \eqref{eq1} and \eqref{eq2}, that is,
\begin{equation}\begin{split}\label{projEq}
&\Delta \ophi_1+p\gamma |U_{\mu,\xi_1}|^{p-1}\ophi_1+\ozeta_1 E+\gamma\overline{\mathcal{N}}(\overline{\ophi}_1,\hat{\hphi}_1)=\overline{c}_{0}U_{\mu,\xi_1}^{p-1}\overline{Z}_{0},\\
&\Delta \hphi_1+p\gamma |U_{\lambda,\eta_1}|^{p-1}\hphi_1+\hzeta_1 E+\gamma\hat{\mathcal{N}}(\overline{\ophi}_1,\hat{\hphi}_1)=\hat{c}_{0}U_{\lambda,\eta_1}^{p-1}\hat{Z}_{0},
\end{split}\end{equation}
in $\mathbb{R}^n$, with
$$\overline{c}_{0}:=\frac{\int_{\mathbb{R}^n}(\ozeta_1 E+\gamma\overline{\mathcal{N}}(\overline{\ophi}_1,\hat{\hphi}_1))\overline{Z}_{0}}{\int_{\mathbb{R}^n}U_{\mu,\xi_1}^{p-1}\overline{Z}_{0}^2},\qquad\hat{c}_{0}:=\frac{\int_{\mathbb{R}^n}(\hzeta_1 E+\gamma\hat{\mathcal{N}}(\overline{\ophi}_1,\hat{\hphi}_1))\hat{Z}_{0}}{\int_{\mathbb{R}^n}U_{\lambda,\eta_1}^{p-1}\hat{Z}_{0}^2}.$$
\begin{prop}\label{existProjProb}
There exists a unique solution $\phi_1=(\overline{\ophi}_1,\hat{\hphi}_1)=(\overline{\ophi}_1(\delta),\hat{\hphi}_1(\varepsilon))$ to \eqref{projEq} such that
$$\|\phi_1\|_*:=\|\overline{\ophi}_1\|_*+\|\hat{\hphi}_1\|_*\leq C(k^{-\frac{n}{q}}+h^{-\frac{n}{q}}),$$
and
\begin{equation}\label{estN}
\|\overline{\mathcal{N}}(\overline{\ophi}_1,\hat{\hphi}_1)\|_{**}\leq C k^{-\frac{2n}{q}},\qquad \|\hat{\mathcal{N}}(\overline{\ophi}_1,\hat{\hphi}_1)\|_{**}\leq Ch^{-\frac{2n}{q}}.
\end{equation}
\end{prop}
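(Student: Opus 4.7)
Using Lemmas \ref{proj1} and \ref{proj2}, I would recast the projected system \eqref{projEq} as a fixed point problem for the pair $(\ophi_1,\hphi_1)$. Setting
\[
\mathcal{A}(\ophi_1,\hphi_1):=\bigl(-\oT(\ozeta_1 E+\gamma\overline{\mathcal{N}}(\oophi_1,\hhphi_1)),\;-\hT(\hzeta_1 E+\gamma\hat{\mathcal{N}}(\oophi_1,\hhphi_1))\bigr),
\]
the system \eqref{projEq} becomes $(\ophi_1,\hphi_1)=\mathcal{A}(\ophi_1,\hphi_1)$, with the projection constants $\overline{c}_0,\hat{c}_0$ encoded in $\oT,\hT$. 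Note that $\Psi(\oophi_1,\hhphi_1)$ is already uniquely determined by $(\ophi_1,\hphi_1)$ via the previous lemma and satisfies \eqref{opPsi}--\eqref{lipPsi}, so solving for $(\ophi_1,\hphi_1)$ will close the construction. A preliminary check is that the data fed to $\oT,\hT$ inherit the parity, Kelvin and rotational invariance hypotheses of Lemmas \ref{proj1}--\ref{proj2}; this follows from the symmetries of $U_*$ and the assumed properties \eqref{cond1}--\eqref{cond4} together with \eqref{condpsi1}--\eqref{condpsi4}.

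The source $\ozeta_1 E$ is controlled by the interior estimates of Section \ref{error}: in rescaled variables, \eqref{errInt1} yields $\|\ozeta_1 E\|_{**}^{\mathrm{sc}}\le Ck^{-n/q}$ and \eqref{errInt2} yields the analogue in $h$ for $\hzeta_1 E$. The nonlinear operator $\overline{\mathcal{N}}$ splits into three pieces that I would estimate separately. First, the linear correction $p(|U_*|^{p-1}\ozeta_1-U_{\mu,\xi_1}^{p-1})\ophi_1$: on $\mathrm{supp}\,\ozeta_1$ the ratio $\bigl(\sum_{i\ne 1}U_{\mu,\xi_i}+U+\sum_l U_{\lambda,\eta_l}\bigr)/U_{\mu,\xi_1}$ is of order $\mu^{(n-2)/2}=O(k^{-(n-2)})$, so after rescaling this contribution is $o(1)\|\oophi_1\|_*$. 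Second, the coupling term $\ozeta_1 p|U_*|^{p-1}\Psi$: using \eqref{opPsi} and the localization of $\ozeta_1$ this is bounded by $C(k^{1-n/q}+h^{1-n/q})(\|\oophi_1\|_*+\|\hhphi_1\|_*+k^{1-n/q}+h^{1-n/q})$. Third, the quadratic remainder $\ozeta_1 N(\phi)$: estimated pointwise by $CU^{p-2}(|\sum_j\ophi_j|^2+|\sum_l\hphi_l|^2+\Psi^2)$ on the interior region, hence quadratic in $\|\phi_1\|_*$ plus $\|\Psi\|_*^2$. Symmetric estimates hold for $\hat{\mathcal{N}}$.

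Combining, one obtains, in the appropriate scaled norms,
\begin{equation*}
\|\overline{\mathcal{N}}\|_{**}+\|\hat{\mathcal{N}}\|_{**}\le o(1)\bigl(\|\oophi_1\|_*+\|\hhphi_1\|_*\bigr)+C\bigl(\|\oophi_1\|_*+\|\hhphi_1\|_*\bigr)^2+o\bigl(k^{-n/q}+h^{-n/q}\bigr),
\end{equation*}
together with the corresponding Lipschitz bound with constant $o(1)$ on any ball of radius $\rho\le\rho_0$. Hence for $k,h$ large and $C_0$ large enough, $\mathcal{A}$ maps the ball
\[
B_{C_0}:=\bigl\{(\oophi_1,\hhphi_1):\|\oophi_1\|_*+\|\hhphi_1\|_*\le C_0(k^{-n/q}+h^{-n/q})\bigr\}
\]
into itself and is a contraction there. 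The Banach fixed point theorem delivers the unique solution with $\|\phi_1\|_*\le C(k^{-n/q}+h^{-n/q})$. Substituting this back into the three-piece estimate of the previous paragraph upgrades the linear and quadratic contributions to $O(k^{-2n/q}+h^{-2n/q})$, producing the sharper bounds \eqref{estN}.

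\textbf{Main obstacle.} The delicate piece is the linear correction $(|U_*|^{p-1}\ozeta_1-U_{\mu,\xi_1}^{p-1})\ophi_1$: being linear in $\ophi_1$ it cannot be absorbed by quadratic smallness and must therefore carry a genuine $o(1)$ multiplier extracted from the bubble configuration. For $n=4,5$ one has $p-1>1$ and the mean-value bound $||a|^{p-1}-|b|^{p-1}|\le C(|a|^{p-2}+|b|^{p-2})|a-b|$ must be combined with the pointwise control of the interacting bubbles; for $n\ge 6$ the Hölder inequality $||a|^{p-1}-|b|^{p-1}|\le C|a-b|^{p-1}$ is preferable. In both regimes one uses the scaling $\mu\sim k^{-2}$ and $\lambda\sim h^{-2}$, together with $\sum_{i\ne 1}|\xi_i-\xi_1|^{-(n-2)}\sim k^{n-2}$, to convert the smallness of these interactions into the required $o(1)$ coefficient in the rescaled $\|\cdot\|_{**}$ norm. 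Once this is secured the fixed point argument closes as in \cite{dPMPP, dmpp2}.
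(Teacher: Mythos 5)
Your overall strategy coincides with the paper's: recast \eqref{projEq} as a fixed point problem for $(\ophi_1,\hphi_1)$ via the linear operators $\oT,\hT$ of Lemmas \ref{proj1}--\ref{proj2}, split $\overline{\mathcal{N}}$ into the linear correction, the coupling with $\Psi$, and the quadratic remainder, and contract on a ball of radius $O(k^{-n/q}+h^{-n/q})$. The treatment of the linear correction (gain of $\mu^{\frac{n-2}{2}}$ from the bubble interactions, cf.\ \eqref{u1}--\eqref{u2}) and of the quadratic remainder matches the paper.

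There is, however, a quantitative slip in your bound for the coupling term that, as written, prevents the argument from closing. You claim $\|\ozeta_1 p|U_*|^{p-1}\Psi\|_{**}\leq C(k^{1-\frac{n}{q}}+h^{1-\frac{n}{q}})\,\|\Psi\|_*$, i.e.\ you import the \emph{exterior}-region prefactor $k^{1-\frac{n}{q}}$ (the one appearing in \eqref{V1} and \eqref{Ustar}). But this term is supported where $\ozeta_1\neq 0$, i.e.\ in $|y-\xi_1|\lesssim k^{-1}$, where $|U_*|^{p-1}\approx U_{\mu,\xi_1}^{p-1}$ concentrates; after the rescaling $y\mapsto \xi_1+\mu y$ and multiplication by $\mu^{\frac{n+2}{2}}$ the datum is pointwise $\leq C\mu^{\frac{n-2}{2}}U^{p-1}(y)\|\Psi\|_{L^\infty}$, whose $\|\cdot\|_{**}$ norm is $C\mu^{\frac{n}{2q}}\|\Psi\|_*\sim Ck^{-\frac{n}{q}}\|\Psi\|_*$ --- this is the paper's \eqref{tilf3}, and it is smaller than your bound by a full factor of $k$. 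The discrepancy is not cosmetic: combined with \eqref{opPsi}, your prefactor produces a free term of size $(k^{1-\frac{n}{q}}+h^{1-\frac{n}{q}})^2\sim k^{2-\frac{2n}{q}}$ in the estimate for $\overline{\mathcal{N}}$, and since $q>\frac{n}{2}$ one has $2-\frac{2n}{q}>-\frac{n}{q}$, so this term is \emph{not} $o(k^{-\frac{n}{q}}+h^{-\frac{n}{q}})$; your displayed combined estimate therefore does not follow from your own intermediate bounds, and $\mathcal{A}$ is not shown to map $B_{C_0}$ into itself. The fix is exactly the interior rescaling above. A related caveat: even with the correct prefactor the coupling contribution is $O(k^{-\frac{n}{q}}\cdot k^{1-\frac{n}{q}})=O(k^{1-\frac{2n}{q}})$, so your final sentence claiming that all contributions upgrade to $O(k^{-\frac{2n}{q}})$ glosses over this piece; you should track it explicitly when deriving \eqref{estN}.
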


\begin{proof}
Denote by $\oT$ and $\hT$ the linear operators predicted by Lemma \ref{proj1} and Lemma \ref{proj2} respectively. Thus, solving \eqref{projEq} is equivalent to solve the fixed point problem
$$\phi_1=
\left(\begin{array}{c}
\ophi_1\\
\hphi_1
\end{array}
\right)
=\left(
\begin{array}{c}
\oT(\ozeta_1E+\gamma\overline{\mathcal{N}}(\overline{\ophi}_1,\hat{\hphi}_1))\\
\hT(\hzeta_1E+\gamma\hat{\mathcal{N}}(\overline{\ophi}_1,\hat{\hphi}_1))
\end{array}
\right)
=\left(
\begin{array}{c}
\overline{\mathcal{M}}(\overline{\ophi}_1,\hat{\hphi}_1)\\
\hat{\mathcal{M}}(\overline{\ophi}_1,\hat{\hphi}_1)
\end{array}
\right)
=:\mathcal{M}(\phi_1).$$
Let us focus first on $\overline{\mathcal{M}}(\overline{\ophi}_1,\hat{\hphi}_1):=\oT(\ozeta_1E+\gamma\overline{\mathcal{N}}(\overline{\ophi}_1,\hat{\hphi}_1))$. Recall that
$$\overline{\mathcal{N}}(\overline{\ophi}_1,\hat{\hphi}_1):=p(|U_*|^{p-1}\ozeta_1-|U_{\mu,\xi_1}|^{p-1})\ophi_1+\ozeta_1\left[p|U_*|^{p-1}\Psi(\oophi_1,\hhphi_1)+N(\phi)\right].$$
Denote in general
$$\tilde{f}(y)=\mu^{\frac{n+2}{2}}f(\xi_1+\mu y).$$
Consider
$$f_1(y):=p\ozeta_1(|U_*|^{p-1}-|U_{\mu,\xi_1}|^{p-1})\ophi_1.$$
For $|y|<\frac{\oalpha}{\mu k}$,
\begin{equation*}\begin{split}
|\tilde{f}_1(y)|=\bigg|&p((U(y)+\sum_{j=2}^kU(y+\mu^{-1}(\xi_1-\xi_j))\\
&+\sum_{l=1}^h\mu^{\frac{n-2}{2}}\lambda^{-\frac{n-2}{2}}U(\lambda^{-1}(\xi_1+\mu y-\eta_l))\\
&-\mu^{\frac{n-2}{2}}U(\xi_1+\mu y))^{p-1}-U^{p-1}(y))\overline{\ophi}_1(y)\bigg|.
\end{split}\end{equation*}
Noticing that
\begin{equation}\label{u1}
\sum_{j=2}^kU(y+\mu^{-1}(\xi_1-\xi_j))\leq C\mu^{n-2}k^{n-2}\sum_{j=1}^k\frac{1}{j^{n-2}}\leq C\mu^{\frac{n-2}{2}},
\end{equation}
and
\begin{equation}\begin{split}\label{u2}
\sum_{l=1}^h\mu^{\frac{n-2}{2}}\lambda^{-\frac{n-2}{2}}&U(\lambda^{-1}(\xi_1+\mu y-\eta_l))\\
&\leq C\sum_{l=1}^h\mu^{\frac{n-2}{2}}\lambda^{-\frac{n-2}{2}}\leq C\mu^{\frac{n-2}{2}}\lambda^{-\frac{n-2}{2}}h\leq C\mu^{\frac{n-2}{2}},
\end{split}\end{equation}
doing a Taylor expansion we get
\begin{equation}\label{tilf1}
|\tilde{f}_1(y)|\leq C \mu^{\frac{n-2}{2}}U^{p-2}(y)|\overline{\ophi}_1(y)|\leq C\mu^{\frac{n-2}{2}}U^{p-1}(y)\|\overline{\ophi}_1\|_*
\end{equation}
and, proceeding as in the computations for the interior error,
\begin{equation*}
\|\tilde{f}_1\|_{**}\leq C\mu^{\frac{n}{2q}}\|\overline{\ophi}_1\|_*.
\end{equation*}
For the term
$$f_2:=(\ozeta_1-1)U_{\mu,\xi_1}^{p-1}\ophi_1,$$
we have
\begin{equation}\label{tilf2}
|\tilde{f}_2(y)|\leq U^p(y)\|\overline{\ophi}_1\|_*,\qquad |y|>\frac{\oalpha}{\mu k},\qquad \|\tilde{f}_2\|_{**}\leq C\mu^{\frac{n}{2q}}\|\overline{\ophi}_1\|_*.
\end{equation}
Consider now
$$f_3:=\ozeta_1p|U_*|^{p-1}\Psi(\ophi_1,\hphi_1).$$
Using \eqref{u1}, \eqref{u2} and \eqref{opPsi} we get that, for $|y|\leq \frac{\oalpha}{\mu k}$,
\begin{equation}\begin{split}\label{tilf3}
|\tilde{f}_3(y)|&\leq CU^{p-1}\mu^{\frac{n-2}{2}}\|\Psi(\ophi_1,\hphi_1)\|_{L^\infty(\R^n)}\\
&\leq CU^{p-1}\mu^{\frac{n-2}{2}}(\|\oophi_1\|_*+\|\hhphi_1\|_*+k^{1-\frac{n}{q}}+h^{1-\frac{n}{q}}),
\end{split}\end{equation}
\begin{equation*}
\|\tilde{f}_3\|_{**}\leq C\mu^{\frac{n}{2q}}(\|\oophi_1\|_*+\|\hhphi\|_*+k^{1-\frac{n}{q}}+h^{1-\frac{n}{q}}).
\end{equation*}
Denote
$$f_4:=\ozeta_1N(\phi),\qquad f_5:=\ozeta_1 E.$$
Notice that
$$\tilde{N}(\phi)=|V_*+\tilde{\phi}_1|^{p-1}(V_*+\tilde{\phi}_1)-|V_*|^{p-1}V_*-p|V_*|^{p-1}\tilde{\phi}_1,$$
where $\tilde{\phi}_1(y):=\mu^{\frac{n-2}{2}}\phi(\xi_1+\mu y)$, and
\begin{equation*}\begin{split}
V_*(y):=&-U(y)-\sum_{j=2}^kU(y+\mu^{-1}(\xi_1-\xi_j))\\
&-\sum_{l=1}^h\mu^{\frac{n-2}{2}}\lambda^{-\frac{n-2}{2}}U(\lambda^{-1}(\xi_1+\mu y-\eta_l))\\
&+\mu^{\frac{n-2}{2}}U(\xi_1+\mu y).
\end{split}\end{equation*}
Hence, for
$$\phi=\ophi_1+\sum_{j=2}^k\ophi_j+\sum_{l=1}^h\hphi_l+\Psi(\oophi_1,\hhphi_1),$$
one has
$$|\tilde{\phi}_1|\leq C\mu^{\frac{n-2}{2}}(\|\oophi_1\|_*+\|\hhphi_1\|_*)+\mu^{\frac{n-2}{2}}\|\Psi(\oophi_1,\hhphi_1)\|_{L^\infty(\R^n)}.$$
Furthermore, in the region $|y|<\frac{\oalpha}{\mu k}$ it holds $U(y)\sim \mu^{\frac{n-2}{2}}$ and thus, after a second order Taylor expansion one has
\begin{equation}\begin{split}\label{tilf4}
|\tilde{f}_4(y)|&\leq C |V_*|^{p-2}|\tilde{\phi}_1|^2\leq C U^{p-2} \mu^{\frac{n-2}{2}}|\tilde{\phi}_1|\\
&\leq CU^{p-1}\mu^{\frac{n-2}{2}}(\|\oophi_1\|_*+\|\hhphi_1\|_*+\|\Psi\|_*),
\end{split}\end{equation}
and
\begin{equation*}\|\tilde{f}_4\|_{**}\leq C\mu^{\frac{n}{2q}}(\|\oophi_1\|_*+\|\hhphi_1\|_*+k^{1-\frac{n}{q}}+h^{1-\frac{n}{q}}).
\end{equation*}
Finally, by \eqref{errInt1}, we know
\begin{equation}\label{tilf5}
\|\tilde{f}_5\|_{**}\leq C\mu^{\frac{n}{2q}}.
\end{equation}
Likewise, one can obtain analogous estimates for
$$\hT(\hzeta_1E+\gamma\hat{\mathcal{N}}(\oophi_1,\hhphi_1))$$
to conclude that $\mathcal{M}$ maps functions $\phi_1$ with $\|\phi_1\|_*\leq C(\mu^{\frac{n}{2q}}+\lambda^{\frac{n}{2q}})$ into the same class of functions. Besides, one can prove that the map is indeed a contraction, and thus we conclude the existence of a unique solution to the system \eqref{projEq}.
\end{proof}

\begin{remark}
The symmetry conditions \eqref{cond2} and \eqref{cond4} follow straightforward as consequence of the uniqueness.
\end{remark}

\section{Proof of Theorem \ref{mainThm}}
Thanks to Proposition \ref{existProjProb} we have $\ophi_1$ and $\hphi_1$ solutions to \eqref{projEq}. Thus, if we find $\delta$ and $\varepsilon$ in \eqref{deltaeps} so that $\overline{c}_{0}(\delta,\varepsilon)=\hat{c}_{0}(\delta,\varepsilon)=0$ they actually solve \eqref{eq1} and \eqref{eq2}. Repeating this argument for every $j=1,\ldots,k-1$ and $l=1,\ldots,h-1$ we conclude that
$$u=U_*+\phi$$
with $\phi$ defined in \eqref{defPhi} is the solution to problem  \eqref{prob}  we were looking for.
Thus, we want to prove the existence of $\delta$ and $\varepsilon$ so that (we keep the names in an abuse of notation)
\begin{equation*}\begin{split}
\overline{c}_{0}(\delta,\varepsilon)&=\int_{\mathbb{R}^n}(\ozeta_1E+\gamma\overline{\mathcal{N}}(\oophi_1,\hhphi_1))\oZ_{0}=0,\\
\hat{c}_{0}(\delta,\varepsilon)&=\int_{\mathbb{R}^n}(\hzeta_1E+\gamma\hat{\mathcal{N}}(\oophi_1,\hhphi_1))\hZ_{0}=0.
\end{split}\end{equation*}
Indeed, we will prove that
\begin{equation}\begin{split}\label{eqsc}
\overline{c}_{0}(\delta,\varepsilon)&=-A_n\frac{\delta}{k^{n-2}}[\delta a^1_{n,k} -a^2_{n,k} ]+\frac{1}{k^{n-1}} \Theta_{k,h}(\delta,\varepsilon),\\
\hat{c}_{0}(\delta,\varepsilon)&=-A_n\frac{\varepsilon}{h^{n-2}}[\varepsilon b^1_{n,h} -b^2_{n,h}]+\frac{1}{h^{n-1}}\Theta_{k,h}(\delta,\varepsilon).
\end{split}\end{equation}
Here $A_n$ is a fixed positive constant that depends on $n$, while for $i=1,2$, $ a^i_{n,k}$, $b^i_{n,h}$ are positive constants, of the form
 $a^i_{n,k} = a^i_n + O({1\over k}) $, $b^i_{n,h} = b^i_n + O({1\over h}) $, as $k,h \to \infty$, with $a_n^i$ and $b_n^i$ positive constants.
 Furthermore, $\Theta_{k,h}(\delta,\varepsilon)$ denotes a generic function, which is smooth in its variables,  and it is  uniformly bounded, together with its first derivatives, in $\delta$ and $\varepsilon$ satisfying the bounds \eqref{deltaeps}, when $k\rightarrow \infty$ and $h\rightarrow\infty$.
By a fixed point argument one can prove the existence of a solution $(\delta,\varepsilon)$ to the system
\begin{equation}\begin{split}
\bar c_0 (\delta , \varepsilon ) = \hat c_0 (\delta , \varepsilon ) = 0.
\end{split}\end{equation}
Thus, if we prove that \eqref{eqsc} holds, we conclude the proof of Theorem \ref{mainThm}.

Both estimates in \eqref{eqsc} follow in the same way, so let us prove the first one. We write
\begin{equation}\label{threeTerms}
\overline{c}_{0}(\delta,\varepsilon)=\int_{\mathbb{R}^n}E\oZ_{0}+\int_{\mathbb{R}^n}(\ozeta_1-1)E\oZ_{0}+\gamma\int_{\mathbb{R}^n}\overline{\mathcal{N}}(\oophi_1,\hhphi_1)\oZ_{0}.
\end{equation}
and we analyze every term independently.
\medskip
For $\delta $ and $\varepsilon$ satisfying \eqref{deltaeps}, we have that

\noindent {\bf Claim 1:}
%\begin{equation}\label{claim1}
%\int_{\mathbb{R}^n}E\oZ_{n+1}=-A_n\frac{\delta}{k^{n-2}}[\delta a_n +\overline{a_n}\frac{\varepsilon}{h^{n-3}}-1]+\frac{1}{k^{n-1}}\overline{\Theta}_k(\delta)+\frac{1}{k^{n-\frac{n}{q}}h^{\frac{n}{q}-1}}\overline{\Theta}_k(\delta)\tilde{\Theta}_h(\varepsilon).
%\end{equation}
\begin{equation}\label{claim1}
\int_{\mathbb{R}^n}E\oZ_{0}=-A_n\frac{\delta}{k^{n-2}}[\delta a^1_{n,k} -a^2_{n,k} ]+\frac{1}{k^{n-1}} \Theta_{k,h}(\delta,\varepsilon).
\end{equation}

\noindent{\bf Claim 2:}
\begin{equation}\label{claim2}
\int_{\mathbb{R}^n}(\ozeta_1-1)E\oZ_{0}=\frac{1}{k^{n-1}} \Theta_{k,h}(\delta,\varepsilon).
\end{equation}

\noindent {\bf Claim 3:}
\begin{equation}\label{claim3}
\int_{\mathbb{R}^n}\overline{\mathcal{N}}(\oophi_1,\hhphi_1)\oZ_{0}=\frac{1}{k^{n-1}} \Theta_{k,h}(\delta,\varepsilon),
\end{equation}
as $k$ and $h \to \infty$.
It is clear that these claims imply the validity of the first equation  in \eqref{eqsc}.
\medskip

\noindent {\bf Proof of Claim 1.}
Let us denote
$$Ext:=\{\cap_{j=1}^k\{|y-\xi_j|>\frac{\oalpha}{k}\}\}\cap\{\cap_{l=1}^h\{|y-\eta_l|>\frac{\halpha}{h}\}\}.$$
For $\oalpha>0$ independent of $k$ we can write the first term as
\begin{equation}\label{three}
\int_{\mathbb{R}^n}E\oZ_{0}=\int_{B(\xi_1,\frac{\oalpha}{k})}E\oZ_{0}+\int_{Ext}E\oZ_{0}+\sum_{j\neq 1}\int_{B(\xi_j,\frac{\oalpha}{k})}E\oZ_{0}+\sum_{l= 1}^h\int_{B(\eta_l,\frac{\halpha}{h})}E\oZ_{0}.
\end{equation}
Considering $\overline{E}_1(y)=\mu^{\frac{n+2}{2}}E(\xi_1+\mu y)$ and using \eqref{ErrIntXi} we obtain
\begin{equation*}\begin{split}
\int_{B(\xi_1,\frac{\oalpha}{k})}E\oZ_{0}=&\int_{B(0,\frac{\oalpha}{\mu k})}\overline{E}_1(y)Z_{0}(y)\,dy\\
=&\textcolor{black}{-\gamma p \sum_{j\neq 1}\int_{B(0,\frac{\oalpha}{\mu k})}U^{p-1}U(y-\mu^{-1}(\xi_j-\xi_1))Z_{0}\,dy}\\
&\textcolor{black}{+\gamma p \mu^{\frac{n-2}{2}}\int_{B(0,\frac{\oalpha}{\mu k})}U^{p-1}U(\xi_1+\mu y)Z_{0}\,dy}\\
&\textcolor{black}{-\gamma p \sum_{l=1}^h\mu^{\frac{n-2}{2}}\lambda^{-\frac{n-2}{2}}\int_{B(0,\frac{\oalpha}{\mu k})}U^{p-1}U(\lambda^{-1}(\xi_1+\mu y-\eta_l))Z_{0}\,dy}\\
&\textcolor{black}{+\gamma p\int_{B(0,\frac{\oalpha}{\mu k})}[(U(y)+sV)^{p-1}-U^{p-1}]V(y)Z_{0}\,dy}\\
&-\mu^{\frac{n+2}{2}}\int_{B(0,\frac{\oalpha}{\mu k})}U^p(\xi_1+\mu y)Z_{0}\,dy\\
&-\sum_{j\neq 1} \int_{B(0,\frac{\oalpha}{\mu k})}U^p(y-\mu^{-1}(\xi_j-\xi_1))Z_{0}\,dy\\
&\textcolor{black}{-\sum_{l=1}^h\mu^{\frac{n+2}{2}}\lambda^{-\frac{n+2}{2}}\int_{B(0,\frac{\oalpha}{\mu k})}U^p(\lambda^{-1}(\xi_1+\mu y-\eta_l))Z_{0}\,dy},
\end{split}\end{equation*}
where
\begin{equation*}\begin{split}
V(y):=&-\sum_{j\neq 1}U(y-\mu^{-1}(\xi_j-\xi_1))+\mu^{\frac{n-2}{2}}U(\xi_1+\mu y)\\
&-\sum_{l=1}^h\mu^{\frac{n-2}{2}}\lambda^{-\frac{n-2}{2}}U(\lambda^{-1}(\xi_1+\mu y-\eta_l)).
\end{split}\end{equation*}

\noindent Doing a Taylor expansion, for $j\neq 1$ there holds
\begin{equation*}
\int_{B(0,\frac{\oalpha}{\mu k})}U^{p-1}U(y-\mu^{-1}(\xi_j-\xi_1))Z_{0}\,dy=\frac{c_1 \mu^{n-2}}{|{\xi}_j-{\xi}_1|^{n-2}}(1+{\mu^2
\over |\xi_j - \xi_1|^2}  \Theta_{k,h}(\delta,\varepsilon) ),
\end{equation*}
and
\begin{equation*}
\int_{B(0,\frac{\oalpha}{\mu k})}U^{p-1}U(\lambda^{-1}(\xi_1+\mu y-\eta_l))Z_{0}\,dy=\frac{c_1 \lambda^{n-2}}{|\xi_1-\eta_l|^{n-2}}(1+{\lambda^2
\over |\xi_j - \xi_1|^2}  \Theta_{k,h}(\delta,\varepsilon) ),
\end{equation*}
where $c_1$ is some positive constant, and, as before, $\Theta_{k,h}(\delta,\varepsilon)$ denotes a generic function, which is smooth in its variables,  and it is  uniformly bounded, together with its first derivatives, in $\delta$ and $\varepsilon$ satisfying the bounds \eqref{deltaeps}, when $k\rightarrow \infty$ and $h\rightarrow\infty$.
Proceeding in a similar way,
\begin{equation*}
\mu^{\frac{n-2}{2}}\int_{B(0,\frac{\oalpha}{\mu k})}U^{p-1}U(\xi_1+\mu y)Z_{0}\,dy=c_2\mu^{\frac{n-2}{2}}(1+(\mu k)^2\Theta_{k,h}(\delta,\varepsilon) ),
\end{equation*}
for some positive constant $c_2$.
On the other hand,
\begin{equation}\label{est1}
\bigg|\mu^{\frac{n+2}{2}}\int_{B(0,\frac{\oalpha}{\mu k})}U^p(\xi_1+\mu y)Z_{0}\,dy\bigg|\leq C\mu^{\frac{n+2}{2}}\int_{B(0,\frac{\oalpha}{\mu k})}\frac{1}{(1+|y|)^{n-2}}\leq C\mu^{\frac{n-2}{2}}k^{-2},
\end{equation}
\begin{equation}\begin{split}\label{est2}
\bigg|\sum_{j\neq 1} \int_{B(0,\frac{\oalpha}{\mu k})}U^p(y-&\mu^{-1}(\xi_j-\xi_1))Z_{0}\,dy\bigg|\\&\leq \sum_{j\neq 1}\frac{\mu^{n+2}}{|{\xi}_j-{\xi}_1|^{n+2}}\int_{B(0,\frac{\oalpha}{\mu k})}\frac{1}{(1+|y|)^{n-2}}\\
&\leq C(\mu k)^{-2}\sum_{j\neq 1}\frac{\mu^{n+2}}{|{\xi}_j-{\xi}_1|^{n+2}},
\end{split}\end{equation}
and
\begin{equation}\begin{split}\label{est3}
\bigg|\sum_{l=1}^h\mu^{\frac{n-2}{2}}\lambda^{-\frac{n+2}{2}}&\int_{B(0,\frac{\oalpha}{\mu k})}U^p(\lambda^{-1}(\xi_1+\mu y-\eta_l))Z_{0}\bigg|\\
&\leq C\sum_{l=1}^h\mu^{\frac{n-2}{2}}\lambda^{\frac{n+2}{2}}\int_{B(0,\frac{\oalpha}{\mu k})}\frac{1}{(1+|y|)^{n-2}}\\
&\leq C\mu^{\frac{n+2}{2}}\lambda^{\frac{n+2}{2}}hk^2.
\end{split}\end{equation}
Finally, putting together \eqref{est1}, \eqref{est2} and \eqref{est3},
\begin{equation}\begin{split}
\bigg|\int_{B(0,\frac{\oalpha}{\mu k})}&[(U(y)+sV)^{p-1}-U^{p-1}]V(y)Z_{0}\,dy\bigg|\\
&\leq C\left(\mu^{\frac{n-2}{2}}k^{-2}+(\mu k)^{-2}\sum_{j\neq 1}\frac{\mu^{n+2}}{|{\xi}_j-{\xi}_1|^{n+2}}+\mu^{\frac{n+2}{2}}\lambda^{\frac{n+2}{2}}hk^2\right).
\end{split}\end{equation}
To estimate the second term in \eqref{three} we apply H\"older inequality to get
\begin{equation}\begin{split}\label{Zfuera}
\bigg|\int_{Ext}E&\oZ_{0}\bigg|\leq C\|(1+|y|)^{n+2-\frac{2n}{q}}E\|_{L^q(Ext)}\|(1+|y|)^{-n-2+\frac{2n}{q}}\oZ_{0}\|_{L^{\frac{q}{q-1}}(Ext)}.
\end{split}\end{equation}
Proceeding as in \cite{dPMPP} we have
$$\|(1+|y|)^{-n-2+\frac{2n}{q}}\oZ_{0}\|_{L^{\frac{q}{q-1}}(Ext)}\leq C\mu^{\frac{n-2}{2}}k^{n-2}k^{\frac{n}{q}-n},$$
and using the estimates obtained in Section \ref{error} we see that
\begin{equation*}\begin{split}
\|(1+|y|)^{n+2-\frac{2n}{q}}&E\|_{L^q(Ext)}
\leq  \,C(\mu^{\frac{n-2}{2}}k^{n-2}k^{1-\frac{n}{q}}+\lambda^{\frac{n-2}{2}}h^{n-2}h^{1-\frac{n}{q}}),
\end{split}\end{equation*}
and thus, substituting in \eqref{Zfuera},
\begin{equation}\label{errExt1}
\bigg|\int_{Ext}E\oZ_{0}\bigg|\leq C\left(\frac{\mu^{n-2}k^{2(n-2)}}{k^{n-1}}+\frac{\mu^{\frac{n-2}{2}}\lambda^{\frac{n-2}{2}}k^{n-2}h^{n-2}}{k^{n-\frac{n}{q}}h^{\frac{n}{q}-1}}\right).
\end{equation}
Arguing as in \cite{dPMPP} the third term in \eqref{three} can be estimated as
\begin{equation}\label{errExt2}
\bigg|\sum_{j\neq 1}\int_{B(\xi_j,\frac{\oalpha}{k})}E\oZ_{0}\bigg| \leq \frac{\mu^{\frac{n-2}{2}}}{(\mu k)^{n-4}}\left[\mu^{n-2}\sum_{j\neq 1}\frac{1}{|\xi_j-\xi_1|^{n-2}}\right].
\end{equation}
Likewise,
\begin{equation*}\begin{split}
\bigg|\int_{B(\eta_l,\frac{\halpha}{h})}&E\oZ_0\bigg|=\bigg|\lambda^{\frac{n-2}{2}}\int_{B(0,\frac{\halpha}{\lambda h})}\hat{E}_l(y)\oZ_0(\lambda y+\eta_l)\bigg|\\
&\leq\lambda^{\frac{n-2}{2}}\|(1+|y|)^{n+2-\frac{2n}{q}}\hat{E}_l\|_{L^q(|y|<\frac{\halpha}{\lambda h})}\\
&\;\;\cdot\|(1+|y|)^{-n-2+\frac{2n}{q}}\mu^{-\frac{n-2}{2}}Z_0(\mu^{-1}(\lambda y+\eta_l-\xi_1)\|_{L^{\frac{q}{q-1}}(|y|<\frac{\halpha}{\lambda h})}.
\end{split}\end{equation*}
Noticing that
\begin{equation*}\begin{split}
\|(1+|y|)^{-n-2+\frac{2n}{q}}&\mu^{-\frac{n-2}{2}}Z_0(\mu^{-1}(\lambda y+\eta_l-\xi_1))\|_{L^{\frac{q}{q-1}}(\R^n)}\\
&\leq C\mu^{\frac{n-2}{2}}\left(\int_1^\frac{\halpha}{\lambda h}\frac{t^{n-1}\,dt}{t^{(n+2-\frac{2n}{q})\frac{q}{q-1}}}\right)^{\frac{q-1}{q}}\leq C\mu^\frac{n-2}{2}(\lambda h)^{2-\frac{n}{q}},
\end{split}\end{equation*}
by \eqref{errInt2} we conclude that
\begin{equation}\label{errExt3}
\bigg|\sum_{l=1}^h\int_{B(\eta_l,\frac{\halpha}{h})}E\oZ_0\bigg|\leq C \lambda^{\frac{n-2}{2}}h^{-\frac{n}{q}}\mu^{\frac{n-2}{2}}(\lambda h)^{2-\frac{n}{q}}h
\end{equation}
Claim 1 follows from these estimates applying the fact that $h=O(k)$.
\medskip

\noindent {\bf Proof of Claim 2.}
Let us estimate the second term of \eqref{threeTerms}. Notice first that
$$\bigg|\int_{\mathbb{R}^n}(\ozeta_1-1)E\oZ_{0}\bigg|\leq C\bigg|\int_{\{|y-\xi_1|>\frac{\oalpha}{k}\}}E\oZ_{0}\bigg|,$$
and we separate this integral as
$$\int_{\{|y-\xi_1|>\frac{\oalpha}{k}\}}E\oZ_{0}=\int_{Ext}E\oZ_{0}+\sum_{j=2}^k\int_{\{|y-\xi_j|\leq\frac{\oalpha}{k}\}}E\oZ_{0}+\sum_{l=1}^h\int_{\{|y-\eta_l|\leq\frac{\halpha}{h}\}}E\oZ_{0}.$$
Thus, Claim 2 follows from \eqref{errExt1}, \eqref{errExt2} and \eqref{errExt3}.
\medskip

\noindent {\bf Proof of Claim 3.}
Notice that
$$\int_{\mathbb{R}^n}\overline{\mathcal{N}}(\oophi_1,\hhphi_1)\oZ_{0}=\mu^{\frac{n+2}{2}}\int_{\mathbb{R}^n}\overline{\mathcal{N}}(\oophi_1,\hhphi_1)(\xi_1+\mu y)Z_{0}(y),$$
and thus, from estimates $\eqref{tilf1}-\eqref{tilf4}$ and the fact $h=O(k)$ we conclude
$$\int_{\mathbb{R}^n}\overline{\mathcal{N}}(\oophi_1,\hhphi_1)\oZ_{0}\leq C k^{3-n-\frac{n}{q}}\int_{\mathbb{R}^n}U^{p-1}|Z_{0}|,$$
and the claim follows.

\part{Nondegeneracy}\label{nondegeneracyProof}
As stated before, the goal of this part is to prove the nondegeneracy (see Definition \ref{defNondeg}) of the solution $u$ provided by Theorem \ref{mainThm} (we drop the dependence on $k$ and $h$ by simplicity).  Recalling the functions $z_\alpha$ defined in \eqref{z0}-\eqref{z5} we can formulate the result in Theorem \ref{teounico} as follows.

\begin{theorem}\label{nondeg}
There exists a sequence of solutions $u$ to Problem \eqref{prob}
among the ones constructed in Theorem \ref{mainThm} for which
 all bounded solutions to the equation
\begin{equation}\label{ll11}-\Delta\varphi-\gamma p |u|^{p-2}u \varphi=0
\end{equation}
are linear combination of the functions $z_\alpha$ for $\alpha=0,\ldots,N_0-1.$ (Recall that $N_0 := 5 (n-1)$.)
\end{theorem}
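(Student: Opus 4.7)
My plan is to prove nondegeneracy by combining a blow-up analysis with a Fourier/mode decomposition that exploits the discrete symmetries of the approximate solution, then subtracting off known kernel elements and running a standard fixed-point/invertibility argument to kill the remainder. Let $\varphi$ be a bounded solution of \eqref{ll11}. First I would normalize $\|\varphi\|_{L^\infty(\R^n)}=1$ and assume for contradiction that $\varphi$ is linearly independent of $z_0,\ldots,z_{N_0-1}$. Since \eqref{ll11} inherits the symmetries of $u$ (rotation by $2\pi/k$ in $(y_1,y_2)$, rotation by $2\pi/h$ in $(y_3,y_4)$, evenness in $y_2,y_4,y_5,\ldots,y_n$, and Kelvin invariance), I would first decompose $\varphi$ according to irreducible representations of these finite symmetry groups; each component is itself a bounded solution of \eqref{ll11}, so without loss of generality I may treat one symmetry class at a time.

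The core step is a concentration-compactness analysis around each bubble. Define the local rescalings $\overline{\varphi}_j(y):=\mu^{\frac{n-2}{2}}\varphi(\xi_j+\mu y)$ and $\hat\varphi_l(y):=\lambda^{\frac{n-2}{2}}\varphi(\eta_l+\lambda y)$. Using the estimates on $U_*$ from Part \ref{Existence}, each of these families is uniformly bounded and locally equicontinuous, and along a subsequence converges to a bounded solution of the linearized equation $-\Delta\Phi=p\gamma U^{p-1}\Phi$ around the standard bubble. By the classical Rey nondegeneracy of the positive bubble, the limit is a linear combination
\[
\Phi=\sum_{\alpha=0}^n c_\alpha Z_\alpha,
\]
with $Z_\alpha$ as in \eqref{defZ}. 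Evenness in the appropriate coordinates eliminates most modes; the discrete rotational invariance of $\varphi$ forces the coefficients $c_\alpha$ to depend on $j$ (respectively $l$) in a very rigid way (essentially they are constant in $j$ for the dilation mode and rotate as characters of $\Z_k$ for the translation modes in the $(y_1,y_2)$ plane). The same happens in the far region, where $\varphi$ converges to a bounded solution of $-\Delta\Phi=p\gamma U^{p-1}\Phi$ on $\R^n$ invariant under the discrete groups and Kelvin, hence a multiple of $Z_0,\ldots,Z_n$ restricted by the symmetries, which exactly accounts for the elements $z_0,z_1,\ldots,z_{n+\alpha+2}$ attached to the large bubble $U$.

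The second main step is to subtract off appropriate linear combinations of the explicit kernel elements $z_0,\ldots,z_{N_0-1}$ so that the resulting $\varphi^\ast$ has vanishing blow-up limits around every bubble and vanishing far-field profile. Concretely, one picks coefficients absorbing each $c_\alpha$ that arises from the three regions (large bubble, $\xi_j$-bubbles, $\eta_l$-bubbles), matching mode by mode in the $\Z_k\times\Z_h$ decomposition. This produces a bounded $\varphi^\ast$ satisfying \eqref{ll11} together with orthogonality conditions
\[
\int_{\R^n} U^{p-1}Z_\alpha \,\overline{\varphi}^\ast_j=0,\qquad
\int_{\R^n} U^{p-1}Z_\alpha \,\hat{\varphi}^\ast_l=0,\qquad \alpha=0,1,\ldots,n,
\]
for every $j,l$, as well as the analogous orthogonalities for the far profile. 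At this stage I would invoke the linear theory developed in Part \ref{Existence} — essentially the operator $T$ used in the construction combined with the projected operators $\overline T,\hat T$ of Lemmas \ref{proj1}--\ref{proj2} — to derive the a priori bound $\|\varphi^\ast\|_\ast\le o(1)(\|\varphi^\ast\|_\ast+1)$ as $k,h\to\infty$. Combined with the normalization $\|\varphi\|_\infty=1$, this forces $\varphi^\ast\equiv0$, contradicting the initial hypothesis and proving the theorem.

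The main obstacle I expect is the matching step: the limit coefficients $c_\alpha$ extracted at the various bubbles must be shown to correspond exactly to the coefficients of the known kernel functions $z_\alpha$, and the correspondence must respect the discrete symmetries. In particular, verifying that the modes arising from rotations in the mixed planes $(y_1,y_\alpha)$, $(y_2,y_\alpha)$, $(y_3,y_\alpha)$, $(y_4,y_\alpha)$ for $\alpha\ge 5$ (which appear in \eqref{z4}--\eqref{z5}) are fully accounted for — and that no extra mode survives — is delicate, because one must carefully track how the invariance under the discrete subgroups $\Z_k\times\Z_h$ interacts with the continuous $O(n-4)$ symmetry acting on $y'$. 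Once this bookkeeping is in place, the invertibility estimates of Part \ref{Existence} close the argument.
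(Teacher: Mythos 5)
Your blow-up/contradiction scheme is genuinely different from the paper's argument (which is a direct decomposition $\varphi=\sum a_\beta{\bf z}_\beta+\sum_\alpha c_\alpha\cdot\Pi_\alpha+\varphi^\perp$ followed by linear algebra on two coupled systems), but as written it has a gap exactly at the point you flag as ``bookkeeping'', and that point is the mathematical core of the theorem. After you split $\varphi$ into irreducible components of $\Z_k\times\Z_h$ and extract a limit $\sum_\alpha c_{\alpha j}Z_\alpha$ around each bubble $\xi_j$ (and similarly at each $\eta_l$ and at the far profile), symmetry alone only tells you that $j\mapsto c_{\alpha j}$ is a character of $\Z_k$; it does \emph{not} tell you which characters occur. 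A priori this leaves roughly $(n+1)(k+h+1)$ candidate kernel directions, and the whole point is to show that all but $N_0=5(n-1)$ of them are incompatible with the equation. This requires a quantitative invertibility statement for the interaction matrices between bubbles --- in the paper, the circulant matrices $\overline H_\alpha,\hat H_\alpha$ and the coupled block system for $\alpha=0,\dots,4$, which are shown (via \cite[Proposition 5.1]{MW} and the contraction arguments of Lemmas \ref{uno} and \ref{dos}) to be invertible precisely on the orthogonal complement of $\oone,\ocos,\osin,\hone,\hcos,\hsin$, with inverse of size $O(k^{n-4})$. Nothing in your proposal produces this; ``no extra mode survives'' is the theorem, not a bookkeeping check, and the assertion that the coefficients are ``constant in $j$ for the dilation mode and rotate as characters for the translation modes'' is unjustified without it.

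There is a second, structural issue with the framing. For a fixed solution $u$ (fixed $k,h$) there is no sequence along which to pass to a blow-up limit: the rescalings $\overline\varphi_j$ are finitely many functions. To make the compactness argument meaningful you must run the contradiction along a sequence $k,h\to\infty$ of putative counterexamples $\varphi_{k,h}$ with $\|\varphi_{k,h}\|_\infty=1$, and then, after subtracting the matched kernel combinations, you must rule out that the supremum of $\varphi^\ast_{k,h}$ escapes to the neck regions or to infinity, where every local blow-up limit is zero and hence gives no contradiction with the normalization. This non-vanishing step is not addressed. The paper avoids both difficulties by working directly at fixed large $k,h$: the a priori bound \eqref{f2} ($\|\varphi^\perp\|\le Ck^{-2+2/n}\sum\|c_\alpha\|$) together with the reverse bound $\|c_\alpha\|\le C\|\varphi^\perp\|$ coming from the solvability analysis of \eqref{systemFact1} and \eqref{finalSyst} forces $c_\alpha=0$ and $\varphi^\perp=0$ outright, with no compactness or normalization needed.
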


\noindent  For later simplification, we introduce the following functions
\begin{equation}\begin{split}\label{newzalpha}
{\bf z}_\beta := z_\beta, \quad &{\mbox {if}} \quad \beta \not= n+3, n+4, n+5, n+6, \\
{\bf z}_{n+2+\alpha} := { z_\alpha -z_{n+2+\alpha}  \over 2} , \quad &{\mbox {if}} \quad \alpha = 1, 2, 3, 4.
\end{split}\end{equation}
Since ${\bf z}_\beta$ are linear combinations of the original functions $z_\beta$, the statement of Theorem \ref{nondeg} is equivalent to
say that there exists a sequence of solutions among the ones constructed in Theorem \ref{mainThm} for which all bounded solutions to \eqref{ll11}
are linear combinations of ${\bf z}_\beta$, for $\beta = 0, \ldots , N_0-1$.

\medskip
Thus, let  $\varphi$ be a bounded solution of \eqref{ll11}, namely  $L(\varphi)=0$, with $L$ defined in \eqref{lin}.
We decompose $\varphi$ as
\begin{equation}\label{defphitilde}
\varphi(y)=\sum_{\beta=0}^{N_0-1}a_\beta {\bf z}_\beta(y)+\tilde{\varphi}(y),
\end{equation}
where  $a_\beta$ are chosen so that
\begin{equation}\label{ortCond}
\int_{\mathbb{R}^n}|u|^{p-1}{\bf z}_\beta\tilde{\varphi}=0
\end{equation}
holds. Notice that, since ${\bf z}_\beta\in \ker\{L\}$, one has $L(\tilde{\varphi})=0$ and thus our goal will be to prove that actually $\tilde{\varphi}\equiv 0$.

\medskip
Recall that our solution $u$ has the form
\begin{equation}\label{finalform}
u(y)=U(y)-\sum_{j=1}^k U_{\mu,\xi_j}(y)-\sum_{l=1}^h U_{\lambda,\eta_l}(y)+\phi(y),
\end{equation}
where $\phi$ is defined in \eqref{defPhi} as $
\phi=\sum_{j=1}^k\overline{\phi}_j+\sum_{l=1}^h\hat{\phi}_l+\psi$.

We introduce the following functions
\begin{equation}\begin{split}\label{Z0big}
Z_{00}(y)&:=\frac{n-2}{2}\left[ U + \psi \right] (y)+\nabla [U +\psi ] (y)\cdot y  , \\
Z_{\alpha 0} (y) &:= \frac{\partial}{\partial y_\alpha}U(y) + \frac{\partial}{\partial y_\alpha}\psi (y) \qquad \alpha=1,\ldots,n.
\end{split}\end{equation}
For $j$ fixed in $\{ 1, \ldots , k\}$, we define
\begin{equation*}\begin{split}
\oZ_{0j}(y)&:=\frac{n-2}{2}[ U_{\mu,\xi_j} + \bar \phi_j] (y)+\nabla [ U_{\mu,\xi_j}  + \bar \phi_j] (y)\cdot(y-\xi_j)\\
\oZ_{1j}(y)&:= \xi_j \cdot \nabla_y [ U_{\mu , \xi_j} + \bar \phi_j ] (y), \qquad  \oZ_{2j}(y):= \xi_j^\perp \cdot \nabla_y [U_{\mu , \xi_j} + \bar \phi_j]  (y),\\
\oZ_{\alpha j}(y)&:=\frac{\partial}{\partial y_\alpha}[ U_{\mu,\xi_j}(y) + \bar \phi_j ],\qquad \alpha=3,\ldots,n.
\end{split}\end{equation*}
For $l$ fixed in $\{ 1, \ldots , h\}$, we define
\begin{equation*}\begin{split}
\hZ_{0j}(y)&:=\frac{n-2}{2}[ U_{\lambda,\eta_l} + \hat \phi_l] (y)+\nabla [ U_{\lambda,\eta_l}  + \hat \phi_l] (y)\cdot(y-\eta_l)\\
\hZ_{3l}(y)&:= \eta_l \cdot \nabla_y [ U_{\lambda , \eta_l} + \hat \phi_l ] (y), \qquad  \hZ_{4l}(y):= \eta_l^\perp \cdot \nabla_y [U_{\lambda , \eta_l} + \hat \phi_l]  (y),\\
\hZ_{\alpha l}(y)&:=\frac{\partial}{\partial y_\alpha}[ U_{\lambda,\eta_l}(y) + \hat \phi_l ],\qquad \alpha=1, 2, 5, 6,\ldots,n.
\end{split}\end{equation*}
In Appendix \ref{appe1} we provide the expressions of the functions ${\bf z}_\beta$, $\beta = 0 , \ldots , N_0 -1$, in terms of the functions
$Z_{\alpha  0}$, $\overline Z_{\alpha j}$, $j=1, \ldots , k$, and $\hat Z_{\alpha , l} $, $l=1, \ldots , h$, for any $\alpha =0, \ldots , n$. These relations will be useful in other parts of our argument.

\medskip
\noindent
We rearrange the functions above in $(n+1)$ vector fields as
\begin{equation}\label{Pibig}
\Pi_\alpha:=\left[Z_{\alpha0},\oZ_{\alpha 1},\ldots,\oZ_{\alpha k},\hZ_{\alpha 1},\ldots,\hZ_{\alpha h}\right]^T, \quad \alpha = 0 , 1 , \ldots , n,
\end{equation}
and, for any given vector $d=\left[d_0,\od_1,\ldots,\od_k,\hd_1,\ldots,\hd_h\right]^T\in\mathbb{R}^{1+k+h}$ we use the notation
$$d\cdot\Pi_\alpha:=d_0Z_{\alpha 0}+\sum_{j=1}^k\od_j\oZ_{\alpha j}+\sum_{l=1}^h\hd_l\hZ_{\alpha l}.$$
With this in mind, we write the function $\tilde{\varphi}$ in \eqref{defphitilde} as
\begin{equation}\label{tildeVarphi}
\tilde{\varphi}(y)=\sum_{\alpha=0}^n c_\alpha\cdot \Pi_\alpha(y)+\varphi^\perp(y),
\end{equation}
where
$$c_\alpha:=\left[c_{\alpha 0},\oc_{\alpha 1},\ldots,\oc_{\alpha k},\hc_{\alpha 1},\ldots,\hc_{\alpha h}\right]^T,\qquad \alpha=0,\ldots, n,$$
are $(n+1)$ vectors in $\mathbb{R}^{k+h+1}$ chosen so that
\begin{equation*}\begin{split}
&\int_{\mathbb{R}^n}U^{p-1}Z_{\alpha 0}\varphi^\perp =0,\qquad \alpha=0,1,\ldots,n,\\
&\int_{\mathbb{R}^n}U^{p-1}_{\mu,\xi_j}\oZ_{\alpha j}\varphi^\perp =0,\qquad j=1,\ldots,k,\;\;\alpha=0,1,\ldots,n,\\
&\int_{\mathbb{R}^n}U^{p-1}_{\lambda,\eta_l}\hZ_{\alpha l}\varphi^\perp =0,\qquad l=1,\ldots,h,\;\;\alpha=0,1,\ldots,n.
\end{split}\end{equation*}
Hence, to prove that $\tilde{\varphi}\equiv 0$ we have to see that $c_\alpha=0$ for every $\alpha$ and $\varphi^\perp\equiv 0$. This will be consequence of the following three facts.
\medskip

\noindent {\bf Fact 1:} Since $L(\tilde{\varphi})=0$, one has that
\begin{equation}\label{eqPi}
\sum_{\alpha=0}^n c_\alpha\cdot L(\Pi_\alpha)=-L(\varphi^\perp),
\end{equation}
with $L$ defined in \eqref{lin}. We write $\varphi^\perp=\varphi_0^\perp+\sum_{j=1}^k\ovarphi_j^\perp+\sum_{l=1}^h\hvarphi_l^\perp$, where
\begin{equation*}\begin{split}
-L(\varphi_0^\perp)&=\sum_{\alpha=0}^n c_{\alpha 0}L(Z_{\alpha 0}),\\
-L(\ovarphi_j^\perp)&=\sum_{\alpha=0}^n \oc_{\alpha j} L(\oZ_{\alpha j}), \qquad j=1,\ldots, k,\\
-L(\hvarphi_l^\perp)&=\sum_{\alpha=0}^n \hc_{\alpha l} L(\hZ_{\alpha l}), \qquad l=1,\ldots, h.
\end{split}\end{equation*}
Furthermore, let us define
$$\oovarphi_j^\perp(y):=\mu^{\frac{n-2}{2}}\ovarphi_j^\perp(\mu y+\xi_j),\qquad \hhvarphi_l^\perp(y):=\lambda^{\frac{n-2}{2}}\hvarphi_l^\perp(\lambda y+\eta_l),$$
and
$$\|\varphi^\perp\|:=\|\varphi_0^\perp\|_*+\sum_{j=1}^k\|\oovarphi_j^\perp\|_*+\sum_{l=1}^h\|\hhvarphi_l^\perp\|_*.$$
Thus, as we will prove in Section \ref{fact2}, there exists a positive constant $C$ such that
\begin{equation}\label{f2}
\|\varphi^\perp\|\leq C k^{-2 + {2\over n}} \, \sum_{\alpha=0}^n\|c_\alpha\|.
\end{equation}

\medskip
\noindent {\bf Fact 2:} Condition \eqref{ortCond} is equivalent to
\begin{equation}\begin{split}\label{systemFact1}
\sum_{\alpha=0}^nc_\alpha\cdot\int_{\mathbb{R}^n}\Pi_\alpha |u|^{p-1}{\bf z}_\beta=&\sum_{\alpha=0}^n\left[c_{\alpha 0}\int_{\mathbb{R}^n}Z_{\alpha 0}|u|^{p-1}{\bf z}_\beta+\sum_{j=1}^k\oc_{\alpha j}\int_{\mathbb{R}^n}\oZ_{\alpha j}|u|^{p-1}{\bf z}_\beta\right.\\
&\left.+\sum_{l=1}^h\hc_{\alpha l}\int_{\mathbb{R}^n}\hZ_{\alpha l}|u|^{p-1}{\bf z}_\beta\right]\\
=&-\int_{\mathbb{R}^n}\varphi^\perp |u|^{p-1}{\bf z}_\beta,\qquad \beta=0,\ldots,N_0-1.
\end{split}\end{equation}
\medskip

\noindent Let us denote
\begin{equation}\begin{split}\label{cossin}
& {\bf{\ocos }}:=\left[
\begin{array}{c}
1\\\cos\otheta_2\\\ldots\\\cos\otheta_{k-1}
\end{array}
\right], \;
 {\bf{ \osin }}:=\left[
\begin{array}{c}
0\\\sin\otheta_2\\\ldots\\\sin\otheta_{k-1}
\end{array}
\right], \qquad  \otheta_j:=\frac{2\pi}{k}(j-1), \\
 &{\bf \hcos}:=\left[
\begin{array}{c}
1\\\cos\htheta_2\\\ldots\\\cos\htheta_{h-1}
\end{array}
\right], \;
 {\bf \hsin}:=\left[
\begin{array}{c}
0\\\sin\htheta_2\\\ldots\\\sin\htheta_{h-1}
\end{array}
\right], \qquad \htheta_l:=\frac{2\pi}{h}(l-1),
\end{split}\end{equation}
 and
\begin{equation}\label{onezero}
\oone:=\left[
\begin{array}{c}
1\\1\\\ldots\\1
\end{array}
\right], \qquad
\hone:=\left[
\begin{array}{c}
1\\1\\\ldots\\1
\end{array}
\right], \qquad
\ozero:=\left[
\begin{array}{c}
0\\0\\\ldots\\0
\end{array}
\right], \qquad
\hzero:=\left[
\begin{array}{c}
0\\0\\\ldots\\0
\end{array}
\right],
\end{equation}
where $\oone$ and $\ozero$ are $k$-dimensional vectors, and $\hone$ and $\hzero$ are vectors of dimension $h$. Likewise, define
$$
\tilde{c}_0:=\left[
\begin{array}{c}
c_{00}\\\ldots\\c_{n0}
\end{array}
\right]\in \R^{n+1}, \qquad
\overline{c}_\alpha:=\left[
\begin{array}{c}
\overline{c}_{\alpha 1}\\\ldots\\\overline{c}_{\alpha k}
\end{array}
\right]\in \R^k, \qquad
\hat{c}_\alpha:=\left[
\begin{array}{c}
\hat{c}_{\alpha 1}\\\ldots\\\hat{c}_{\alpha h}
\end{array}
\right]\in \R^h,
$$
and
$$
\overline{c}:=\left[
\begin{array}{c}
\overline{c}_0\\\overline{c}_1\\\ldots\\\overline{c}_{n+1}
\end{array}
\right]\in\R^{(n+1)k}, \qquad
\hat{c}:=\left[
\begin{array}{c}
\hat{c}_0\\\hat{c}_1\\\ldots\\\hat{c}_n
\end{array}
\right]\in \R^{(n+1)h}.
$$
Thus,

\begin{prop}\label{systC}Solving system \eqref{systemFact1} is equivalent to solve
\begin{equation}\begin{split}\label{t0}
c_0
\cdot \left[
\begin{array}{c}
1\\-\oone\\\hone
\end{array}
\right]
&
+c_1\cdot \left[
\begin{array}{c}
0\\-\oone\\\hzero
\end{array}
\right]
+c_3\cdot \left[
\begin{array}{c}
0\\\ozero\\-\hone
\end{array}
\right]
=\,t_0\\
&+R_{h,k} [ c_0 , c_1 , \ldots c_n],
\end{split}\end{equation}

\begin{equation}\begin{split}\label{t1}
c_1\cdot \left[
\begin{array}{c}
1\\-\ocos\\-\hone
\end{array}
\right]
&+c_2\cdot \left[
\begin{array}{c}
0\\\osin\\\hzero
\end{array}
\right]
=\,t_1+ R_{h,k} [ c_0 , c_1 , \ldots c_n],
\end{split}\end{equation}

\begin{equation}\begin{split}\label{t2}
c_1\cdot \left[
\begin{array}{c}
0\\-\osin\\\hzero
\end{array}
\right]
&+c_2\cdot \left[
\begin{array}{c}
1\\-\ocos\\-\hone
\end{array}
\right]
=\,t_2+R_{h,k} [ c_0 , c_1 , \ldots c_n],
\end{split}\end{equation}

\begin{equation}\begin{split}\label{t3}
c_3\cdot \left[
\begin{array}{c}
1\\-\oone\\-\hcos
\end{array}
\right]
&
+ c_4\cdot \left[
\begin{array}{c}
0\\\ozero\\\hsin
\end{array}
\right]
=\,t_3+R_{h,k} [ c_0 , c_1 , \ldots c_n],
\end{split}\end{equation}

\begin{equation}\begin{split}\label{t4}
c_3\cdot \left[
\begin{array}{c}
0\\\ozero\\-\hsin
\end{array}
\right]
&+c_4\cdot \left[
\begin{array}{c}
1\\-\oone\\-\hcos
\end{array}
\right]
=\,t_4+R_{h,k} [ c_0 , c_1 , \ldots c_n],
\end{split}\end{equation}

\begin{equation}\begin{split}\label{zalpha}
c_\alpha\cdot \left[
\begin{array}{c}
1\\-\oone\\-\hone
\end{array}
\right]
=t_\alpha+R_{h,k} [ c_0 , c_1 , \ldots c_n],
\end{split}\end{equation}
for $\alpha=5,\ldots,n$,
\begin{equation}\begin{split}\label{nmas1}
c_2\cdot \left[
\begin{array}{c}
0\\\oone\\\hzero
\end{array}
\right]
=t_{n+1}+R_{h,k} [ c_0 , c_1 , \ldots c_n],
\end{split}\end{equation}

\begin{equation}\begin{split}\label{nmas2}
c_4 \cdot \left[
\begin{array}{c}
0\\\ozero\\\hone
\end{array}
\right]
=t_{n+2}+R_{h,k} [ c_0 , c_1 , \ldots c_n],
\end{split}\end{equation}

\begin{equation}\begin{split}\label{nmas3}
c_0\cdot \left[
\begin{array}{c}
0\\\ocos\\\hat 0
\end{array}
\right] -c_1\cdot \left[
\begin{array}{c}
0\\\ocos\\\hat 0
\end{array}
\right]
=t_{n+3}+R_{h,k} [ c_0 , c_1 , \ldots c_n],
\end{split}\end{equation}

\begin{equation}\begin{split}\label{nmas4}
c_0\cdot \left[
\begin{array}{c}
0\\\osin\\ \hat 0
\end{array}
\right] - c_1\cdot \left[
\begin{array}{c}
0\\\osin\\\hat 0
\end{array}
\right]
=t_{n+4}+R_{h,k} [ c_0 , c_1 , \ldots c_n],
\end{split}\end{equation}

\begin{equation}\begin{split}\label{nmas5}
c_0\cdot \left[
\begin{array}{c}
0\\ \overline 0 \\ \hcos
\end{array}
\right] - c_3\cdot \left[
\begin{array}{c}
0\\ \overline 0 \\ \hcos
\end{array}
\right]
=t_{n+5}+R_{h,k} [ c_0 , c_1 , \ldots c_n],
\end{split}\end{equation}

\begin{equation}\begin{split}\label{nmas6}
c_0\cdot \left[
\begin{array}{c}
0\\ \overline 0 \\ \hsin
\end{array}
\right] - c_3\cdot \left[
\begin{array}{c}
0\\ \overline 0 \\ \hsin
\end{array}
\right]
=t_{n+6}+R_{h,k} [ c_0 , c_1 , \ldots c_n],
\end{split}\end{equation}

\begin{equation}\begin{split}\label{nmas7}
c_1\cdot \left[
\begin{array}{c}
0\\\ozero\\\hcos
\end{array}
\right]+
c_3\cdot \left[
\begin{array}{c}
0\\-\ocos\\\hzero
\end{array}
\right]
=t_{n+7}+R_{h,k} [ c_0 , c_1 , \ldots c_n],
\end{split}\end{equation}

\begin{equation}\begin{split}\label{nmas8}
c_1\cdot \left[
\begin{array}{c}
0\\\ozero\\\hsin
\end{array}
\right]+
c_4\cdot \left[
\begin{array}{c}
0\\-\ocos\\\hzero
\end{array}
\right]
=t_{n+8}+R_{h,k} [ c_0 , c_1 , \ldots c_n],
\end{split}\end{equation}

\begin{equation}\begin{split}\label{nmasalphamas4}
c_\alpha\cdot \left[
\begin{array}{c}
0\\-\ocos\\\hzero
\end{array}
\right]
=t_{n+\alpha+4}+R_{h,k} [ c_0 , c_1 , \ldots c_n],
\end{split}\end{equation}
for $\alpha=5,\ldots,n$,
\begin{equation}\begin{split}\label{2nmas5}
c_2\cdot \left[
\begin{array}{c}
0\\\ozero\\\hcos
\end{array}
\right]+
c_3\cdot \left[
\begin{array}{c}
0\\-\osin\\\hzero
\end{array}
\right]
=t_{2n+5} +R_{h,k} [ c_0 , c_1 , \ldots c_n] ,
\end{split}\end{equation}

\begin{equation}\begin{split}\label{2nmas6}
c_2\cdot \left[
\begin{array}{c}
0\\\ozero\\\hsin
\end{array}
\right]+c_4\cdot \left[
\begin{array}{c}
0\\-\osin\\\hzero
\end{array}
\right]
=t_{2n+6} +R_{h,k} [ c_0 , c_1 , \ldots c_n],
\end{split}\end{equation}
and, for $\alpha=5,\ldots,n$,
\begin{equation}\begin{split}\label{2nmasalphamas2}
c_\alpha\cdot \left[
\begin{array}{c}
0\\-\osin\\\hzero
\end{array}
\right]
=t_{2n+\alpha+2}+R_{h,k} [ c_0 , c_1 , \ldots c_n],
\end{split}\end{equation}
\begin{equation}\begin{split}\label{3nmasalphamenos2}
c_\alpha\cdot \left[
\begin{array}{c}
0\\\ozero\\-\hsin
\end{array}
\right]
=t_{3n+\alpha-2} +R_{h,k} [ c_0 , c_1 , \ldots c_n],
\end{split}\end{equation}
\begin{equation}\begin{split}\label{4nmasalphamenos6}
c_\alpha\cdot \left[
\begin{array}{c}
0\\\ozero\\-\hcos
\end{array}
\right]
=t_{4n+\alpha-6}+R_{h,k} [ c_0 , c_1 , \ldots c_n].
\end{split}\end{equation}
 Here $t_i$, $i=0,\ldots, 5n-6$, are fixed numbers such that
$$\|t_i\|\leq C\|\varphi^\perp\|.$$
Moreover, $R_{h,k} [ c_0 , c_1 , \ldots c_n]$ stands for a function, whose specific definition changes from line to line, which can be described as follows:
\begin{equation*}\begin{split}
R_{h,k} [ c_0 , c_1 , \ldots c_n] &= \Theta_{k,h} \mathcal{L} \left(
c_{00}, \ldots , c_{n0}
\right) + \overline \Theta_{k,h} \overline{\mathcal{L}} \left( \overline c_1 , \ldots , \overline c_k \right) \\
&+\hat  \Theta_{k,h} \hat{\mathcal{L}} \left( \hat c_1 , \ldots , \hat c_h \right)
\end{split} \end{equation*}
and $\mathcal{L}:\R^{n+1}\rightarrow {\R}$, $\overline{\mathcal{L}}: \R^{k(n+1)}\rightarrow {\R}$, $\hat{\mathcal{L}}: \R^{h(n+1)}\rightarrow {\R}$ are linear functions uniformly bounded when $k,h\rightarrow\infty$, and
$$\Theta_{k,h}= O (k^{1-\frac{n}{q}}),\qquad \overline \Theta_{k,h} =O ( k^{-\frac{n}{q}}),\qquad \hat \Theta_{k,h} =O( k^{-\frac{n}{q}} ),$$
where $O(1)$ denotes a quantity uniformly bounded when $k,h\rightarrow \infty$, and $\frac{n}{2}<q<n$ is the number fixed in \eqref{normStarStar}.
\end{prop}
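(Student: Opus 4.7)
The plan is to substitute the decomposition $\tilde\varphi=\sum_{\alpha=0}^n c_\alpha\cdot\Pi_\alpha+\varphi^\perp$ into \eqref{systemFact1} and then compute, at leading order, each of the $N_0=5(n-1)$ scalar products $\int_{\R^n}|u|^{p-1}\Pi_\alpha\mathbf{z}_\beta$. The starting input is the expansion of every $\mathbf{z}_\beta$ as an explicit linear combination of the single-bubble basis elements $Z_{\alpha 0}$, $\oZ_{\alpha j}$, $\hZ_{\alpha l}$, with coefficients involving the trigonometric weights $\cos\otheta_j,\sin\otheta_j,\cos\htheta_l,\sin\htheta_l$ forced by the angular positions of the bubbles $\xi_j,\eta_l$. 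Such expansions, derived in Appendix \ref{appe1} by differentiating the ansatz \eqref{finalform} along the symmetry generators \eqref{z0}--\eqref{z5}, already prescribe the structure of the left-hand sides appearing in \eqref{t0}--\eqref{4nmasalphamenos6}.

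First I would prove the selection principle
\[
\int_{\R^n}|u|^{p-1}Z_*Z_{**}=\delta_{Z_*,Z_{**}}\,\mathcal{A}_\alpha+R(Z_*,Z_{**}),
\]
where $\mathcal{A}_\alpha$ is a positive constant proportional to $\int_{\R^n}U^{p-1}Z_\alpha^2$ (with $Z_\alpha$ as in \eqref{defZ}) and is nonzero only when $Z_*$ and $Z_{**}$ are centered at the same bubble and carry the same index $\alpha$. The remainder $R(Z_*,Z_{**})$ is of order $k^{1-n/q}$ when one of the factors sits on the central bubble, and $k^{-n/q}$ when both factors sit on rescaled bubbles. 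Three ingredients go into this step: the single-bubble orthogonalities $\int U^{p-1}Z_\alpha Z_{\alpha'}=0$ for $\alpha\ne\alpha'$ in $\{0,\ldots,n\}$; the pointwise domination of $|u|^{p-1}$ by $U^{p-1}+\sum_j U_{\mu,\xi_j}^{p-1}+\sum_l U_{\lambda,\eta_l}^{p-1}$ plus $\phi$-corrections coming from Proposition \ref{existProjProb}; and the same region-by-region H\"older splitting used in Section \ref{error}. Plugging the Appendix expansion of $\mathbf{z}_\beta$ into $\int|u|^{p-1}\Pi_\alpha\mathbf{z}_\beta$ and collecting the diagonal contributions produces exactly the vector coefficients on the left-hand side of \eqref{t0}--\eqref{4nmasalphamenos6}; for instance, the block $[1,-\oone,-\hone]^T$ in \eqref{zalpha} arises because the pure translation generator $z_\alpha$ expands as $Z_{\alpha 0}-\sum_j\oZ_{\alpha j}-\sum_l\hZ_{\alpha l}$, while the trigonometric blocks in \eqref{t1}--\eqref{t4} and \eqref{nmas3}--\eqref{4nmasalphamenos6} come from pairing $\xi_j=\sqrt{1-\mu^2}(\cos\otheta_j,\sin\otheta_j,0,\ldots)$ with the corresponding derivative.

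The two remaining items are (a) controlling the source terms $t_\beta=-\int\varphi^\perp|u|^{p-1}\mathbf{z}_\beta$, which is immediate from H\"older's inequality with the $\|\cdot\|_*$ and $\|\cdot\|_{**}$ norms together with uniform bounds on the weighted $L^{q/(q-1)}$ norm of $\mathbf{z}_\beta$, yielding $|t_\beta|\le C\|\varphi^\perp\|$; and (b) reorganising the remaining cross-bubble interactions into the single remainder $R_{h,k}[c_0,\ldots,c_n]=\Theta_{k,h}\mathcal{L}(c_{00},\ldots,c_{n0})+\overline\Theta_{k,h}\overline{\mathcal{L}}(\oc_1,\ldots,\oc_k)+\hat\Theta_{k,h}\hat{\mathcal{L}}(\hc_1,\ldots,\hc_h)$ with $\Theta_{k,h}=O(k^{1-n/q})$ and $\overline\Theta_{k,h},\hat\Theta_{k,h}=O(k^{-n/q})$. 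Step (b) is the main obstacle: it requires keeping precise track of the trigonometric factors $\cos\otheta_j,\sin\otheta_j,\cos\htheta_l,\sin\htheta_l$ (which convert sums over $j$ and $l$ into the linear functionals $\mathcal{L},\overline{\mathcal{L}},\hat{\mathcal{L}}$), then bounding each resulting integral by the same dyadic splitting and decay arguments used in Section \ref{error}, relying on $\sum_{j\ne 1}|\xi_j-\xi_1|^{-(n-2)}\le Ck^{n-2}$, on the scalings $\mu\simeq k^{-2}$ and $\lambda\simeq h^{-2}$ from \eqref{deltaeps}, and on the hypothesis $h=O(k)$. Once (a) and (b) are in place, the $N_0$ equations \eqref{t0}--\eqref{4nmasalphamenos6} follow term by term from \eqref{systemFact1}, which proves the proposition.
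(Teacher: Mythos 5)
Your proposal is correct and follows essentially the same route as the paper: expand each ${\bf z}_\beta$ via the Appendix identities, establish the almost-orthogonality estimates for $\int |u|^{p-1}Z_*Z_{**}$ (the paper's \eqref{int1}--\eqref{int6}), set $t_\beta$ proportional to $\int\varphi^\perp|u|^{p-1}{\bf z}_\beta$, and absorb the off-diagonal interactions into $R_{h,k}$. The only minor discrepancy is bookkeeping: in the paper the cross-terms of the $Z$-products are actually $O(\mu^{\frac{n-2}{2}}+\lambda^{\frac{n-2}{2}})$, and the weaker rates $k^{1-\frac{n}{q}}$, $k^{-\frac{n}{q}}$ in $\Theta_{k,h}$, $\overline\Theta_{k,h}$, $\hat\Theta_{k,h}$ originate from the remainder functions controlled in Proposition \ref{behaviorPi}, not from those products themselves; your coarser attribution still suffices for the stated conclusion.
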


\noindent We will prove this result in Section \ref{prop31}.
\medskip

\medskip

\noindent {\bf Fact 3:} Multiplying \eqref{eqPi} for every $Z_{\alpha 0}$, $\oZ_{\alpha j}$, $\hZ_{\alpha l}$, $\alpha=0,1,\ldots, n$, $j=1,\ldots, k$ and $l=1,\ldots, h$ and integrating in $\mathbb{R}^n$ we get a system of the form
\begin{equation}\label{syst}
M
\left[\begin{array}{c}
c_0\\c_1\\\vdots\\c_n
\end{array}\right]=
-\left[\begin{array}{c}
r_0\\r_1\\\vdots\\r_n
\end{array}\right]\qquad \hbox{ with }\qquad r_\alpha:=\left[\begin{array}{c}
\int_{\mathbb{R}^n}L(\varphi^\perp)Z_{\alpha 0}\\
\int_{\mathbb{R}^n}L(\varphi^\perp)\oZ_{\alpha 1}\\
\vdots\\
\int_{\mathbb{R}^n}L(\varphi^\perp)\oZ_{\alpha k}\\
\int_{\mathbb{R}^n}L(\varphi^\perp)\hZ_{\alpha 1}\\
\vdots\\
\int_{\mathbb{R}^n}L(\varphi^\perp)\hZ_{\alpha h}
\end{array}\right].\end{equation}
Due to the symmetries, the matrix $M$ has the form
$$M=
\left[\begin{array}{cc}
M_1 & 0\\
0 & M_2
\end{array}\right],$$
where $M_1$ and $M_2$ are square matrices of dimensions $(5\times(k+h+1))^2$ and $((n-4)\times(k+h+1))^2$ of the form
$$M_1=
\left[\begin{array}{ccccc}
\tilde{A} & \tilde{B} & \tilde{C} & \tilde{D} & \tilde{E}\\
\tilde{B}^T & \tilde{F} & \tilde{G} & \tilde{H} & \tilde{I}\\
\tilde{C}^T & \tilde{G}^T & \tilde{J} & \tilde{K} & \tilde{L}\\
\tilde{D}^T & \tilde{H}^T & \tilde{K}^T & \tilde{M} & \tilde{N}\\
\tilde{E}^T & \tilde{I}^T & \tilde{L}^T & \tilde{N}^T & \tilde{P}
\end{array}\right],\qquad
M_2=
\left[\begin{array}{cccc}
\tilde{H}_5 & 0 & 0 & 0 \\
0 & \tilde{H}_6 & 0 & 0\\
\ldots & \ldots & \ldots & \ldots\\
0 & 0 & 0 & \tilde{H}_n
\end{array}\right],$$
with
\begin{equation}\label{Halpha}
\tilde{H}_\alpha=
\left[\begin{array}{ccc}
\int L(Z_{\alpha 0})Z_{\alpha 0} & \left(\int L(Z_{\alpha 0})\oZ_{\alpha j}\right)_{j}  & \left(\int L(Z_{\alpha 0})\hZ_{\alpha l}\right)_{l} \\
\left(\int L(\oZ_{\alpha i})Z_{\alpha 0}\right)_{i} & \left(\int L(\oZ_{\alpha i})\oZ_{\alpha j}\right)_{i,j} & \left(\int L(\oZ_{\alpha i})\hZ_{\alpha l}\right)_{i,l}\\
\left(\int L(\hZ_{\alpha m})Z_{\alpha 0}\right)_{m} & \left(\int L(\hZ_{\alpha m})\oZ_{\alpha j}\right)_{m,j} &  \left(\int L(\hZ_{\alpha m})\hZ_{\alpha l}\right)_{m,l}
\end{array}\right],
\end{equation}
for $i,j=1,\ldots,k$ and $m,l=1,\ldots,h$. Thus, solving \eqref{syst} is equivalent to find a solution of
\begin{equation}\label{finalSyst}
M_1\left[
\begin{array}{c}
c_0\\
c_1\\
c_2\\
c_3\\
c_4
\end{array}
\right]=
\left[
\begin{array}{c}
r_0\\
r_1\\
r_2\\
r_3\\
r_4
\end{array}
\right]
,\qquad
\tilde{H}_\alpha c_\alpha=r_\alpha\hbox{ for }\alpha=5,\ldots,n,
\end{equation}
with $r_\alpha$ defined in \eqref{syst}.

\begin{prop}\label{nonso}
There exists $k_0,h_0$ such that, for all $k>k_0$, $h>h_0$, system \eqref{finalSyst} is solvable. Moreover, the solution has the form
%\begin{equation*}\begin{split}
%c_0
%=&\;
%v_0+
%t_0\left[
%\begin{array}{c}
%1\\-\oone\\-\hone
%\end{array}
%\right]+t_1\left[
%\begin{array}{c}
%0\\\ozero\\\hzero
%\end{array}
%\right]+t_2\left[
%\begin{array}{c}
%0\\\ozero\\\hzero
%\end{array}
%\right]
%+t_3\left[
%\begin{array}{c}
%0\\\ozero\\\hzero
%\end{array}
%\right]+t_4\left[
%\begin{array}{c}
%0\\\ozero\\\hzero
%\end{array}
%\right]\\
%&+
%\overline{t}_0\left[
%\begin{array}{c}
%0\\ \ozero\\ \hzero\\
%\end{array}\right]+\overline{t}_1 \left[
%\begin{array}{c}
%0\\ \ocos\\ \hzero
%\end{array}\right]
%+\overline{t}_2 \left[
%\begin{array}{c}
%0\\ \osin\\ \hzero
%\end{array}\right]+\overline{t}_3 \left[
%\begin{array}{c}
%0\\ \ozero\\ \hzero
%\end{array}\right]+\overline{t}_4 \left[
%\begin{array}{c}
%0\\ \ozero\\ \hzero
%\end{array}\right]
%\\
%&+\overline{t}_5 \left[
%\begin{array}{c}
%0\\ \ozero\\ \hzero
%\end{array}\right]
%+\overline{t}_6 \left[
%\begin{array}{c}
%0\\ \ozero\\ \hzero
%\end{array}\right]
%+
%\hat{t}_0 \left[
%\begin{array}{c}
%0\\ \ozero\\ \hzero
%\end{array}\right]+\hat{t}_1 \left[
%\begin{array}{c}
%0\\ \ozero\\ \hcos
%\end{array}\right]+\hat{t}_2 \left[
%\begin{array}{c}
%0\\ \ozero\\ \hsin
%\end{array}\right]\\
%&+\hat{t}_3 \left[
%\begin{array}{c}
%0\\ \ozero\\ \hzero
%\end{array}\right]
%+\hat{t}_4 \left[
%\begin{array}{c}
%0\\ \ozero\\ \hzero
%\end{array}\right]
%+\hat{t}_5 \left[
%\begin{array}{c}
%0\\ \ozero\\ \hzero
%\end{array}\right]
%+\hat{t}_6 \left[
%\begin{array}{c}
%0\\ \ozero\\ \hzero
%\end{array}\right],
%\end{split}\end{equation*}
\begin{equation*}\begin{split}
&c_0
=\;
v_0+
t_0\left[
\begin{array}{c}
1\\-\oone\\-\hone
\end{array}
\right]+t_1\left[
\begin{array}{c}
0\\\ozero\\\hzero
\end{array}
\right]+t_2\left[
\begin{array}{c}
0\\\ozero\\\hzero
\end{array}
\right]
+t_3\left[
\begin{array}{c}
0\\\ozero\\\hzero
\end{array}
\right]+t_4\left[
\begin{array}{c}
0\\\ozero\\\hzero
\end{array}
\right]\\
&\qquad+
\overline{t}_0\left[
\begin{array}{c}
0\\ \ozero\\ \hzero\\
\end{array}\right]+\overline{t}_1 \left[
\begin{array}{c}
0\\ \ocos\\ \hzero
\end{array}\right]
+\overline{t}_2 \left[
\begin{array}{c}
0\\ \osin\\ \hzero
\end{array}\right]
+
\hat{t}_0 \left[
\begin{array}{c}
0\\ \ozero\\ \hzero
\end{array}\right]+\hat{t}_1 \left[
\begin{array}{c}
0\\ \ozero\\ \hcos
\end{array}\right]+\hat{t}_2 \left[
\begin{array}{c}
0\\ \ozero\\ \hsin
\end{array}\right],\\
\end{split}\end{equation*}
%\begin{equation*}\begin{split}
%c_1
%=&\;
%v_1+
%t_0\left[
%\begin{array}{c}
%0\\-\oone\\\hzero
%\end{array}
%\right]+t_1\left[
%\begin{array}{c}
%1\\-\frac{1}{\sqrt{1-\mu^2}}\ocos\\-\hone
%\end{array}
%\right]+t_2\left[
%\begin{array}{c}
%0\\-\frac{1}{\sqrt{1-\mu^2}}\osin\\\hzero
%\end{array}
%\right]\\
%&
%+t_3\left[
%\begin{array}{c}
%0\\\ozero\\\hzero
%\end{array}
%\right]+t_4\left[
%\begin{array}{c}
%0\\\ozero\\\hzero
%\end{array}
%\right]
%+
%\overline{t}_0\left[
%\begin{array}{c}
%0\\ \ozero\\ \hzero\\
%\end{array}\right]+\overline{t}_1 \left[
%\begin{array}{c}
%0\\ -\ocos\\ \hzero\\
%\end{array}\right]
%+\overline{t}_2 \left[
%\begin{array}{c}
%0\\ -\osin\\ \hzero\\
%\end{array}\right]\\
%&+\overline{t}_3 \left[
%\begin{array}{c}
%0\\ \ozero\\ \hzero\\
%\end{array}\right]+\overline{t}_4 \left[
%\begin{array}{c}
%0\\ \ozero\\ \hzero\\
%\end{array}\right]
%+\overline{t}_5 \left[
%\begin{array}{c}
%0\\ \ozero\\ \hzero\\
%\end{array}\right]
%+\overline{t}_6 \left[
%\begin{array}{c}
%0\\ \ozero\\ \hzero\\
%\end{array}\right]
%+
%\hat{t}_0 \left[
%\begin{array}{c}
%0\\ \ozero\\ \hzero\\
%\end{array}\right]+\hat{t}_1 \left[
%\begin{array}{c}
%0\\ \ozero\\ \hzero\\
%\end{array}\right]\\
%&+\hat{t}_2 \left[
%\begin{array}{c}
%0\\ \ozero\\ \hzero\\
%\end{array}\right]+\hat{t}_3 \left[
%\begin{array}{c}
%0\\ \ozero\\ \hcos\\
%\end{array}\right]
%+\hat{t}_4 \left[
%\begin{array}{c}
%0\\ \ozero\\ \hsin\\
%\end{array}\right]
%+\hat{t}_5 \left[
%\begin{array}{c}
%0\\ \ozero\\ \hzero\\
%\end{array}\right]
%+\hat{t}_6 \left[
%\begin{array}{c}
%0\\ \ozero\\ \hzero\\
%\end{array}\right],
%\end{split}\end{equation*}
\begin{equation*}\begin{split}
&c_1
=\;
v_1+
t_0\left[
\begin{array}{c}
0\\-\oone\\\hzero
\end{array}
\right]+t_1\left[
\begin{array}{c}
1\\-\frac{1}{\sqrt{1-\mu^2}}\ocos\\-\hone
\end{array}
\right]+t_2\left[
\begin{array}{c}
0\\-\frac{1}{\sqrt{1-\mu^2}}\osin\\\hzero
\end{array}
\right]+t_3\left[
\begin{array}{c}
0\\\ozero\\\hzero
\end{array}
\right]+t_4\left[
\begin{array}{c}
0\\\ozero\\\hzero
\end{array}
\right]\\
&\qquad
+
\overline{t}_0\left[
\begin{array}{c}
0\\ \ozero\\ \hzero\\
\end{array}\right]+\overline{t}_1 \left[
\begin{array}{c}
0\\ -\ocos\\ \hzero\\
\end{array}\right]
+\overline{t}_2 \left[
\begin{array}{c}
0\\ -\osin\\ \hzero\\
\end{array}\right]
+
\hat{t}_0 \left[
\begin{array}{c}
0\\ \ozero\\ \hzero\\
\end{array}\right]+\hat{t}_1 \left[
\begin{array}{c}
0\\ \ozero\\ \hzero\\
\end{array}\right]+\hat{t}_2 \left[
\begin{array}{c}
0\\ \ozero\\ \hzero
\end{array}\right],
\end{split}\end{equation*}
%\begin{equation*}\begin{split}
%c_2
%=&\;
%v_2+
%t_0\left[
%\begin{array}{c}
%0\\\ozero\\\hzero
%\end{array}
%\right]+t_1\left[
%\begin{array}{c}
%0\\\frac{1}{\sqrt{1-\mu^2}}\osin\\\hzero
%\end{array}
%\right]+t_2\left[
%\begin{array}{c}
%1\\-\frac{1}{\sqrt{1-\mu^2}}\ocos\\-\hone
%\end{array}
%\right]+t_3\left[
%\begin{array}{c}
%0\\\ozero\\\hzero
%\end{array}
%\right]\\
%&+t_4\left[
%\begin{array}{c}
%0\\\ozero\\\hzero
%\end{array}
%\right]
%+
%\overline{t}_0\left[
%\begin{array}{c}
%0\\ \oone\\ \hzero\\
%\end{array}\right]+\overline{t}_1 \left[
%\begin{array}{c}
%0\\ \ozero\\ \hzero\\
%\end{array}\right]
%+\overline{t}_2 \left[
%\begin{array}{c}
%0\\ \ozero\\ \hzero\\
%\end{array}\right]+\overline{t}_3 \left[
%\begin{array}{c}
%0\\ \ozero\\ \hzero\\
%\end{array}\right]+\overline{t}_4 \left[
%\begin{array}{c}
%0\\ \ozero\\ \hzero\\
%\end{array}\right]
%\\
%&+\overline{t}_5 \left[
%\begin{array}{c}
%0\\ \ozero\\ \hzero\\
%\end{array}\right]
%+\overline{t}_6 \left[
%\begin{array}{c}
%0\\ \ozero\\ \hzero\\
%\end{array}\right]
%+
%\hat{t}_0 \left[
%\begin{array}{c}
%0\\ \ozero\\ \hzero\\
%\end{array}\right]+\hat{t}_1 \left[
%\begin{array}{c}
%0\\ \ozero\\ \hzero\\
%\end{array}\right]+\hat{t}_2 \left[
%\begin{array}{c}
%0\\ \ozero\\ \hzero\\
%\end{array}\right]+\hat{t}_3 \left[
%\begin{array}{c}
%0\\ \ozero\\ \hzero\\
%\end{array}\right]
%\\
%&+\hat{t}_4 \left[
%\begin{array}{c}
%0\\ \ozero\\ \hzero\\
%\end{array}\right]
%+\hat{t}_5 \left[
%\begin{array}{c}
%0\\ \ozero\\ \hcos\\
%\end{array}\right]
%+\hat{t}_6 \left[
%\begin{array}{c}
%0\\ \ozero\\ \hsin\\
%\end{array}\right],
%\end{split}\end{equation*}
\begin{equation*}\begin{split}
&c_2
=\;
v_2+
t_0\left[
\begin{array}{c}
0\\\ozero\\\hzero
\end{array}
\right]+t_1\left[
\begin{array}{c}
0\\\frac{1}{\sqrt{1-\mu^2}}\osin\\\hzero
\end{array}
\right]+t_2\left[
\begin{array}{c}
1\\-\frac{1}{\sqrt{1-\mu^2}}\ocos\\-\hone
\end{array}
\right]+t_3\left[
\begin{array}{c}
0\\\ozero\\\hzero
\end{array}
\right]+t_4\left[
\begin{array}{c}
0\\\ozero\\\hzero
\end{array}
\right]\\
&\qquad
+
\overline{t}_0\left[
\begin{array}{c}
0\\ \oone\\ \hzero\\
\end{array}\right]+\overline{t}_1 \left[
\begin{array}{c}
0\\ \ozero\\ \hzero\\
\end{array}\right]
+\overline{t}_2 \left[
\begin{array}{c}
0\\ \ozero\\ \hzero\\
\end{array}\right]+
\hat{t}_0 \left[
\begin{array}{c}
0\\ \ozero\\ \hzero\\
\end{array}\right]
+\hat{t}_1 \left[
\begin{array}{c}
0\\ \ozero\\ \hzero\\
\end{array}\right]+\hat{t}_2 \left[
\begin{array}{c}
0\\ \ozero\\ \hzero\\
\end{array}\right],
\end{split}\end{equation*}
%\begin{equation*}\begin{split}
%c_3
%=&\;
%v_3+
%t_0\left[
%\begin{array}{c}
%0\\\ozero\\-\hone
%\end{array}
%\right]+t_1\left[
%\begin{array}{c}
%0\\\ozero\\\hzero
%\end{array}
%\right]+t_2\left[
%\begin{array}{c}
%0\\\ozero\\\hzero\\
%\end{array}
%\right]+t_3\left[
%\begin{array}{c}
%1\\-\oone\\-\frac{1}{\sqrt{1-\lambda^2}}\hcos
%\end{array}
%\right]\\
%&+t_4\left[
%\begin{array}{c}
%0\\\ozero\\-\frac{1}{\sqrt{1-\lambda^2}}\hsin
%\end{array}
%\right]
%+
%\overline{t}_0\left[
%\begin{array}{c}
%0\\ \ozero\\ \hzero
%\end{array}\right]+\overline{t}_1 \left[
%\begin{array}{c}
%0\\ \ozero\\ \hzero
%\end{array}\right]
%+\overline{t}_2 \left[
%\begin{array}{c}
%0\\ \ozero\\ \hzero
%\end{array}\right]+\overline{t}_3 \left[
%\begin{array}{c}
%0\\ \ocos\\ \hzero
%\end{array}\right]\\
%&+\overline{t}_4 \left[
%\begin{array}{c}
%0\\ \osin\\ \hzero
%\end{array}\right]
%+\overline{t}_5 \left[
%\begin{array}{c}
%0\\ \ozero\\ \hzero
%\end{array}\right]
%+\overline{t}_6 \left[
%\begin{array}{c}
%0\\ \ozero\\ \hzero
%\end{array}\right]
%+
%\hat{t}_0 \left[
%\begin{array}{c}
%0\\ \ozero\\ \hzero
%\end{array}\right]+\hat{t}_1 \left[
%\begin{array}{c}
%0\\ \ozero\\ -\hcos
%\end{array}\right]\\
%&+\hat{t}_2 \left[
%\begin{array}{c}
%0\\ \ozero\\ -\hsin
%\end{array}\right]+\hat{t}_3 \left[
%\begin{array}{c}
%0\\ \ozero\\ \hzero
%\end{array}\right]
%+\hat{t}_4 \left[
%\begin{array}{c}
%0\\ \ozero\\ \hzero
%\end{array}\right]
%+\hat{t}_5 \left[
%\begin{array}{c}
%0\\ \ozero\\ \hzero
%\end{array}\right]
%+\hat{t}_6 \left[
%\begin{array}{c}
%0\\ \ozero\\ \hzero
%\end{array}\right],
%\end{split}\end{equation*}
\begin{equation*}\begin{split}
&c_3
=\;
v_3+
t_0\left[
\begin{array}{c}
0\\\ozero\\-\hone
\end{array}
\right]+t_1\left[
\begin{array}{c}
0\\\ozero\\\hzero
\end{array}
\right]+t_2\left[
\begin{array}{c}
0\\\ozero\\\hzero\\
\end{array}
\right]+t_3\left[
\begin{array}{c}
1\\-\oone\\-\frac{1}{\sqrt{1-\lambda^2}}\hcos
\end{array}
\right]+t_4\left[
\begin{array}{c}
0\\\ozero\\-\frac{1}{\sqrt{1-\lambda^2}}\hsin
\end{array}
\right]\\
&\qquad
+
\overline{t}_0\left[
\begin{array}{c}
0\\ \ozero\\ \hzero
\end{array}\right]+\overline{t}_1 \left[
\begin{array}{c}
0\\ \ozero\\ \hzero
\end{array}\right]
+\overline{t}_2 \left[
\begin{array}{c}
0\\ \ozero\\ \hzero
\end{array}\right]
+
\hat{t}_0 \left[
\begin{array}{c}
0\\ \ozero\\ \hzero
\end{array}\right]+\hat{t}_1 \left[
\begin{array}{c}
0\\ \ozero\\ -\hcos
\end{array}\right]+\hat{t}_2 \left[
\begin{array}{c}
0\\ \ozero\\ -\hsin
\end{array}\right],
\end{split}\end{equation*}
%\begin{equation*}\begin{split}
%c_4
%= &\;
%v_4+
%t_0\left[
%\begin{array}{c}
%0\\\ozero\\\hzero
%\end{array}
%\right]+t_1\left[
%\begin{array}{c}
%0\\\ozero\\\hzero
%\end{array}
%\right]+t_2\left[
%\begin{array}{c}
%0\\\ozero\\\hzero
%\end{array}
%\right]+t_3\left[
%\begin{array}{c}
%0\\\ozero\\\frac{1}{\sqrt{1-\lambda^2}}\hsin
%\end{array}
%\right]\\
%&+t_4\left[
%\begin{array}{c}
%1\\-\oone\\-\frac{1}{\sqrt{1-\lambda^2}}\hcos
%\end{array}
%\right]
%+
%\overline{t}_0\left[
%\begin{array}{c}
%0\\ \ozero\\ \hzero
%\end{array}\right]+\overline{t}_1 \left[
%\begin{array}{c}
%0\\ \ozero\\ \hzero
%\end{array}\right]+\overline{t}_2 \left[
%\begin{array}{c}
%0\\ \ozero\\ \hzero
%\end{array}\right]+\overline{t}_3 \left[
%\begin{array}{c}
%0\\ \ozero\\ \hzero
%\end{array}\right]\\
%&+\overline{t}_4 \left[
%\begin{array}{c}
%0\\ \ozero\\ \hzero
%\end{array}\right]
%+\overline{t}_5 \left[
%\begin{array}{c}
%0\\ \ocos\\ \hzero
%\end{array}\right]
%+\overline{t}_6 \left[
%\begin{array}{c}
%0\\ \osin\\ \hzero
%\end{array}\right]
%+
%\hat{t}_0 \left[
%\begin{array}{c}
%0\\ \ozero\\ \hone
%\end{array}\right]+\hat{t}_1 \left[
%\begin{array}{c}
%0\\ \ozero\\ \hzero
%\end{array}\right]\\
%&+\hat{t}_2 \left[
%\begin{array}{c}
%0\\ \ozero\\ \hzero
%\end{array}\right]+\hat{t}_3 \left[
%\begin{array}{c}
%0\\ \ozero\\ \hzero
%\end{array}\right]
%+\hat{t}_4 \left[
%\begin{array}{c}
%0\\ \ozero\\ \hzero
%\end{array}\right]
%+\hat{t}_5 \left[
%\begin{array}{c}
%0\\ \ozero\\ \hzero
%\end{array}\right]
%+\hat{t}_6 \left[
%\begin{array}{c}
%0\\ \ozero\\ \hzero
%\end{array}\right],
%\end{split}\end{equation*}
\begin{equation*}\begin{split}
&c_4
= \;
v_4+
t_0\left[
\begin{array}{c}
0\\\ozero\\\hzero
\end{array}
\right]+t_1\left[
\begin{array}{c}
0\\\ozero\\\hzero
\end{array}
\right]+t_2\left[
\begin{array}{c}
0\\\ozero\\\hzero
\end{array}
\right]+t_3\left[
\begin{array}{c}
0\\\ozero\\\frac{1}{\sqrt{1-\lambda^2}}\hsin
\end{array}
\right]+t_4\left[
\begin{array}{c}
1\\-\oone\\-\frac{1}{\sqrt{1-\lambda^2}}\hcos
\end{array}
\right]\\
&\qquad
+
\overline{t}_0\left[
\begin{array}{c}
0\\ \ozero\\ \hzero
\end{array}\right]+\overline{t}_1 \left[
\begin{array}{c}
0\\ \ozero\\ \hzero
\end{array}\right]+\overline{t}_2 \left[
\begin{array}{c}
0\\ \ozero\\ \hzero
\end{array}\right]
+
\hat{t}_0 \left[
\begin{array}{c}
0\\ \ozero\\ \hone
\end{array}\right]+\hat{t}_1 \left[
\begin{array}{c}
0\\ \ozero\\ \hzero
\end{array}\right]+\hat{t}_2 \left[
\begin{array}{c}
0\\ \ozero\\ \hzero
\end{array}\right],
\end{split}\end{equation*}
 and, for $\alpha=5,\ldots,n$,
\begin{equation*}\begin{split}
c_\alpha&=v_\alpha
+t_{\alpha }\left[\begin{array}{c}
1\\-\oone\\-\hone
\end{array}\right] + \overline \nu_{\alpha 1} \left[\begin{array}{c}
0\\\ocos\\ \hzero
\end{array}\right] + \overline \nu_{\alpha 2} \left[\begin{array}{c}
0\\\osin \\ \hzero
\end{array}\right] + \hat \nu_{\alpha 1} \left[\begin{array}{c}
0\\\ozero\\ \hcos
\end{array}\right] +\hat \nu_{\alpha 2} \left[\begin{array}{c}
0\\\ozero\\ \hsin
\end{array}\right],
\end{split}
\end{equation*}
for any $t_0,t_1,t_2,t_3,t_4$, $\overline{t}_0,\overline{t}_1,\overline{t}_2$,$\hat{t}_0,\hat{t}_1,\hat{t}_2$, and $t_{\alpha } , \overline \nu_{\alpha 1} , \overline \nu_{\alpha 2} , \hat \nu_{\alpha 1} , \hat \nu_{\alpha 2}$ real parameters. The vectors $v_\alpha\in \R^{k+h+1}$ are fixed and satisfy
$$\|v_\alpha\|\leq C\|\varphi^\perp\|, \;\alpha=0,1,\ldots, n.$$
\end{prop}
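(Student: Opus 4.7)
The plan is to reduce Proposition \ref{nonso} to a linear-algebra problem governed by the leading-order matrix of $M_1$ and each $\tilde H_\alpha$, identify the near-kernel of this matrix (which will account precisely for the free parameters $t_0,\ldots,t_4,\bar t_0,\bar t_1,\bar t_2,\hat t_0,\hat t_1,\hat t_2$ in Proposition \ref{nonso}, and $t_\alpha,\bar\nu_{\alpha i},\hat\nu_{\alpha i}$ for the lower blocks), and then invert on a complement of this near-kernel to obtain the bounded particular solutions $v_\alpha$ with $\|v_\alpha\|\leq C\|\varphi^\perp\|$.

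First I would expand the entries of $M_1$ and of each $\tilde H_\alpha$ at leading order. The diagonal entry $\int L(Z_{\alpha 0}) Z_{\alpha 0}$ is small because $Z_{\alpha 0}$ lies in the kernel of the linearization around $U$ alone, so the integrand reduces to a term controlled by $|U_*|^{p-1}-U^{p-1}$ and by $\phi$; both are of size $\mu^{(n-2)/2}k+\lambda^{(n-2)/2}h = O(k^{-1})$ after integration. The diagonal entries $\int L(\oZ_{\alpha j})\oZ_{\alpha j}$ and $\int L(\hZ_{\alpha l})\hZ_{\alpha l}$ are handled analogously after the rescalings $y\to\xi_j+\mu y$ and $y\to\eta_l+\lambda y$, and give fixed positive constants (essentially $\int U^{p-1}Z_\alpha^2$) times the coefficients in front of the near-kernel vectors. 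The off-diagonal entries across different bubbles, or between a bubble and the central profile, are governed by products of localized factors at distant centers and produce coefficients of the form $\mu^{n-2}/|\xi_j-\xi_i|^{n-2}$ or $\lambda^{n-2}/|\eta_l-\eta_m|^{n-2}$; a careful Taylor expansion of these cross terms yields exactly the blocks with the vectors $[1,-\oone,-\hone]^T$, $[0,\ocos,\hzero]^T$, $[0,\ozero,\hsin]^T$, etc.

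Next I would identify the near-kernel. By construction, the vectors displayed in the statement of Proposition \ref{nonso} correspond to variations of the approximate solution along directions that are true symmetries of \eqref{prob}: the global dilation parameter (producing the five $t_\beta$-directions in the $M_1$ block and the $t_\alpha$-direction in each $\tilde H_\alpha$), the planar rotations in the $(y_1,y_2)$- and $(y_3,y_4)$-planes (producing $\bar t_0$ and $\hat t_0$), the two planar translations in each plane ($\bar t_1,\bar t_2,\hat t_1,\hat t_2$), and the translations/rotations in $y_\alpha$ for $\alpha\geq 5$ (producing $\bar\nu_{\alpha i},\hat\nu_{\alpha i}$). Since these motions are exact symmetries, the corresponding vectors lie in the kernel of the leading-order part of $M_1$ and $\tilde H_\alpha$; the dimension count gives eleven such vectors for $M_1$ and five per block $\tilde H_\alpha$, matching exactly the number of free parameters in the statement.

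Finally, the leading-order matrix restricted to a complement of this near-kernel is uniformly invertible: its nondegenerate part is essentially diagonal, with entries bounded below by a fixed positive multiple of $\int U^{p-1}Z_\alpha^2 > 0$, and the perturbation to this leading order is of size $O(k^{-2+2/n})$ by the estimates of Section \ref{sec3} together with Fact 1. A Fredholm-alternative argument (or equivalently a direct block inversion after a change of basis aligning the near-kernel with the vectors displayed in the statement) then produces, for any right-hand side $r_\alpha$ with $\|r_\alpha\|\leq C\|\varphi^\perp\|$, a particular solution $v_\alpha$ with the same bound, and the general solution is $v_\alpha$ plus an arbitrary linear combination of near-kernel vectors.

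The hard part will be the precise leading-order computation of the cross integrals $\int L(\oZ_{\alpha j})\hZ_{\beta l}$ and $\int L(\oZ_{\alpha j})\oZ_{\beta i}$ with $i\neq j$ or $\alpha\neq\beta$: the cancellations among these (coming from the rotational symmetry of the configuration of points $\xi_j$ and $\eta_l$) are exactly what force the near-kernel to be spanned by the vectors $\ocos,\osin,\hcos,\hsin,\oone,\hone$ in the specific combinations shown, and any miscalculation would alter the dimension of the near-kernel and destroy the explicit parametrization of the solution space.
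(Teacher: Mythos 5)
Your overall intuition---that the free parameters come from kernel directions generated by the symmetries and that one then inverts on a complement---matches the paper's strategy, but three points in your execution are genuinely wrong or missing. First, the displayed vectors must lie in the \emph{exact} kernel of $M_1$ and of each $\tilde H_\alpha$, not merely in a ``near-kernel'' of a leading-order matrix: otherwise $v_\alpha$ plus an arbitrary multiple of such a vector does not solve \eqref{finalSyst}, and the claimed affine parametrization fails. The paper obtains exact kernel vectors from the identities $L({\bf z}_\beta)=0$ (the ${\bf z}_\beta$ being kernel elements of the linearization around the \emph{true} solution $u$), combined with the decompositions of the ${\bf z}_\beta$ in terms of $Z_{\alpha 0}$, $\oZ_{\alpha j}$, $\hZ_{\alpha l}$ given in Appendix \ref{appe1}; these yield exact row relations for $M_1$ and $\tilde H_\alpha$. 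Second, since the matrices are singular, solvability requires compatibility conditions on the right-hand sides $r_\alpha$ (orthogonality to the left kernel, as in \eqref{ortw} and \eqref{rowalpha}); you never verify these, and they are not automatic---the paper checks them using the same exact row relations satisfied by the $r_\alpha$, again consequences of $L({\bf z}_\beta)=0$ together with Lemma \ref{AuxLemma}.

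Third, your claim that the reduced matrix is ``uniformly invertible'' with diagonal entries bounded below by a fixed positive multiple of $\int U^{p-1}Z_\alpha^2>0$ is false. Every entry of $M_1$ has the form $\int L(V)W$ with $V$ an approximate kernel element of $L$, so \emph{all} entries---including $\int L(\oZ_{\alpha j})\oZ_{\alpha j}$---are small, of order $k^{-2n+4}$ or $k^{-2n+6}$ (see \eqref{boundsBeta}); you appear to be conflating $M_1$ with the Gram matrix built from $\int|u|^{p-1}VW$. Consequently the inverse on the complement of the kernel has norm of order $k^{n-4}$ (see \eqref{ss3} and \eqref{boundsW}), which degenerates as $k\to\infty$ when $n>4$; the bound $\|v_\alpha\|\le C\|\varphi^\perp\|$ survives only because $\|\overline r_\alpha\|\le C\mu^{\frac{n-2}{2}}\|\varphi^\perp\|$ is itself of order $k^{-(n-2)}\|\varphi^\perp\|$, so that $k^{n-4}\cdot k^{-(n-2)}=k^{-2}$. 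Finally, the coupling between the $k$-bubble block and the $h$-bubble block (the off-diagonal blocks $\gamma_\alpha\mathds{1}$ in \eqref{semplice1} and the coupled systems \eqref{system1}--\eqref{system2}) has to be resolved by a contraction argument exploiting the smallness of the cross coefficients; this step is absent from your outline.
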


\begin{proof}
Proceeding as in \cite[Proposition 6.1]{MW} it can be checked that, for any $\alpha=0,\ldots,n$,
$$\|\overline{r}_\alpha\|\leq C\mu^{\frac{n-2}{2}}\|\varphi^\perp\|,\qquad \|\hat{r}_\alpha\|\leq C\lambda^{\frac{n-2}{2}}\|\varphi^\perp\|,$$
and combining this estimate with Lemma \ref{uno} and Lemma \ref{dos} we obtain the result.
\end{proof}

%\noindent Proposition \ref{systC} implies the uniqueness of the solution $[c_0\,\, c_1\,\, \ldots\,\, c_n]^T$ of \eqref{finalSyst}. Moreover,
%$$\sum_{\alpha=0}^n\|c_\alpha\|\leq C\|\varphi^\perp\|_*,$$
%for some constant $C>0$ and this estimate, together with \eqref{f2}, allows us to conclude
%$$c_\alpha =0\qquad\forall\alpha=0,\ldots,n,$$
%and thus $\varphi^\perp\equiv 0$. Replacing this in \eqref{tildeVarphi} the proof of Theorem \ref{nondeg} is complete.

\medskip
\noindent
We shall use the following notations:
for any $\alpha=0,1,\ldots,n,$
\begin{equation}\label{vectorsC}
\tilde{c}_\alpha:=\left[\begin{array}{c}
\oc_\alpha\\\hc_{\alpha}
\end{array}\right] \in \R^{k+h} ,\;\;\oc_\alpha:=\left[\begin{array}{c}\oc_{\alpha 1}\\\ldots\\\oc_{\alpha k}\end{array}\right]\in \R^k ,\;\;\hc_\alpha:=\left[\begin{array}{c}\hc_{\alpha 1}\\\ldots\\\hc_{\alpha h}\end{array}\right]\in \R^h,
\end{equation}
\begin{equation*}
\tilde{r}_\alpha:=\left[\begin{array}{c}
\ovr_\alpha\\\hr_\alpha
\end{array}\right]\in \R^{k+h},\;\;
\end{equation*}
where
\begin{equation*}
\ovr_\alpha:=\left[\begin{array}{c}
\int_{\mathbb{R}^n}L(\varphi^\perp)\oZ_{\alpha 1}\\
\vdots\\
\int_{\mathbb{R}^n}L(\varphi^\perp)\oZ_{\alpha k}\end{array}\right]\in \R^k,\;\;
\hr_\alpha:=\left[\begin{array}{c}
\int_{\mathbb{R}^n}L(\varphi^\perp)\hZ_{\alpha 1}\\
\vdots\\
\int_{\mathbb{R}^n}L(\varphi^\perp)\hZ_{\alpha h}
\end{array}\right]\in \R^h.
\end{equation*}

\medskip
\section{Solving the second system in \eqref{finalSyst}}

Let $\alpha $ be fixed in $\{ 5, \ldots , n \}$. This section is devoted to solve
\begin{equation}\label{semplice}
\tilde{H}_\alpha c_\alpha=r_\alpha,
\end{equation}
where $r_\alpha$ is the vector defined in \eqref{syst}.

Using \eqref{Halpha} and \eqref{Z0big} and the fact that $L({\bf z}_\alpha)=0$ it follows that
\begin{equation*}
\hbox{row}_1(\tilde{H}_\alpha)=\sum_{l=2}^{k+h+1}\hbox{row}_l(\tilde{H}_\alpha).
\end{equation*}
As a consequence,
$\left[\begin{array}{c}
1\\
-\oone\\
-\hone
\end{array}\right]\in\ker(\tilde{H}_\alpha),$
and hence $\tilde{H}_\alpha c_\alpha=r_\alpha$ has a solution only if
$r_\alpha\cdot\left[\begin{array}{c}
1\\
-\oone\\
-\hone
\end{array}\right]=0$.
This last orthogonality condition is indeed fulfilled since one has
\begin{equation}\label{rowalpha}
\hbox{row}_1(r_\alpha)=\sum_{j=2}^{k+1}\hbox{row}_j(r_\alpha)+\sum_{l=k+2}^{h+k+1}\hbox{row}_l(r_\alpha),
\end{equation}
again as consequence of the fact that $L({\bf z}_\alpha)=0$. Thus, the general solution to \eqref{semplice} has the form
\begin{equation*}
c_\alpha=\left[\begin{array}{c}
0\\\tilde{c}_\alpha
\end{array}\right]
+t\left[\begin{array}{c}
1\\-\oone\\-\hone
\end{array}\right],\qquad t\in\R,
\end{equation*}
where $\tilde c_\alpha$ solves
\begin{equation}\label{semplice1}
H_\alpha \tilde c_\alpha = \tilde r_\alpha,
\end{equation}
with
\begin{equation*}
H_\alpha=\left[
\begin{array}{cc}
\overline{H}_\alpha & \gamma_\alpha \mathds{1}_{k\times h}\\
\gamma_\alpha \mathds{1}_{h\times k} & \hat{H}_\alpha
\end{array}
\right],\qquad \alpha=5,\ldots,n,
\end{equation*}
being $\bar H_\alpha$ and $\hat H_\alpha$ square matrices of dimensions $k\times k$ and $h\times h$ respectively, defined by
$$
\bar H_\alpha := \left( \int L(\bar Z_{\alpha , i} ) \bar Z_{\alpha ,j} \, dy \right)_{i,j=1, \ldots k}, \quad
\hat H_\alpha := \left( \int L(\hat Z_{\alpha , l} ) \hat Z_{\alpha ,m} \, dy \right)_{l,m=1, \ldots h}
$$
and
$$\gamma_\alpha:=\int_{\R^n}L(\oZ_{\alpha 1})\hZ_{\alpha 1}.$$
By $\mathds{1}_{s\times t}$ we mean a $s\times t$-dimensional matrix whose entries are all 1.
Observe that
\begin{equation}\label{ss0}
\left| \gamma_\alpha \right| \leq C k^{4-2n},
\end{equation}
for some fixed constant $C$.
Arguing as in \cite{MW}, one can show that $\bar H_\alpha$ and $\hat H_\alpha$ are circulant matrices of dimensions $(k\times k)$ and $(h\times h)$ respectively (see \cite{KS} for properties).
Moreover, \cite[Proposition 5.1]{MW} ensures that
\begin{equation}\label{semplice3}
\overline{H}_\alpha[\oc_\alpha]=\os_\alpha,\;\; \hat{H}_\alpha[\hc_\alpha]=\hs_\alpha,
\end{equation}
has a solution if
$$\os_\alpha\cdot\ocos = \os_\alpha\cdot\osin = 0\;\hbox{ and }\;\hs_\alpha\cdot\hcos = \hs_\alpha\cdot\hsin = 0.$$
Actually, if a solution to \eqref{semplice3} exists, it has the form
\begin{equation*}\begin{split}
&\oc_\alpha=\ow_\alpha+\overline \nu_1\ocos+\overline \nu_2\osin, \quad
\hc_\alpha=\hw_\alpha+\hat \nu_1\hcos+\hat \nu_2\hsin,
\end{split}\end{equation*}
for all $\overline \nu_1,\,\overline \nu_2,\, \hat \nu_1,\, \hat \nu_2\in\R$, where $\ow_\alpha,\, \hw_\alpha$ are the unique solutions to
$$
\overline H_\alpha \ow_\alpha = \os_\alpha , \quad \ow_\alpha \cdot \ocos = \ow_\alpha \cdot \osin = 0
$$
and
$$
\hat H_\alpha \hw_\alpha = \hs_\alpha , \quad \hw_\alpha \cdot \hcos = \hw_\alpha \cdot \hsin = 0.
$$
Furthermore, in \cite[Proposition 5.1]{MW}  it is proved that there exists a constant $C$ independent of $k$ so that, for all $k $ large
\begin{equation*}
\|\ow_\alpha \|\leq C k^{n-4} \|\os_\alpha \|\;\;\hbox{and}\;\;\|\hw_\alpha\|\leq C k^{n-4} \|\hs_\alpha\|.
\end{equation*}

We start with the observation that system \eqref{semplice1} is solvable. Indeed,
since $L({\bf z}_{n+\alpha + 4} )= L ({\bf z}_{2n+\alpha+2} ) = 0$,  one has that $\overline r_\alpha \cdot \ocos =
\overline r_\alpha \cdot \osin =0$. Similarly, one gets that $\hat r_\alpha \cdot \hcos =
\hat r_\alpha \cdot \hsin =0$, as consequence of the fact that $L({\bf z}_{3n+\alpha -2} )= L ({\bf z}_{4n+\alpha-6} ) = 0$. Moreover, the vector $\mathds{1}_{k\times h} \hat c_\alpha$ is a multiple of $\oone$, and $\mathds{1}_{h\times k} \overline c_\alpha$ is a multiple of $\hone$. Thus $\mathds{1}_{k\times h} \hat c_\alpha \cdot \cos = \mathds{1}_{k\times h} \hat c_\alpha \cdot \sin =0$, and
$\mathds{1}_{h\times k} \overline c_\alpha \cdot \hcos = \mathds{1}_{h\times k} \overline c_\alpha\cdot \hsin =0$.
Now we observe that the solution of system \eqref{semplice1} has the form
\begin{equation}\begin{split} \label{ss1}
\overline c_\alpha &= \ow_\alpha + \overline \nu_1 \ocos + \overline \nu_2 \osin \\
 \hat c_\alpha &= \hw_\alpha + \hat \nu_1 \ocos + \hat \nu_2 \osin ,
\end{split} \end{equation}
for any value for $\overline \nu_1,\,\overline \nu_2,\, \hat \nu_1,\, \hat \nu_2\in\R$, where $\ow_\alpha$ and $\hw_\alpha$ are the unique solutions to
\begin{equation}\begin{split} \label{ss2}
\overline H_\alpha \ow_\alpha &= \overline r_\alpha -\gamma_\alpha \mathds{1}_{k\times h} \hat w_\alpha, \quad \ow_\alpha \cdot \ocos = \ow_\alpha \cdot \osin = 0, \\
\hat H_\alpha \hw_\alpha &= \hat r_\alpha -\gamma_\alpha \mathds{1}_{h\times k} \overline w_\alpha, \quad \hw_\alpha \cdot \hcos = \hw_\alpha \cdot \hsin = 0.
\end{split}
\end{equation}
Moreover, there exists a constant $C$ so that
\begin{equation}
\label{ss3}
\|\ow_\alpha \|\leq C k^{n-4} \|\overline r_\alpha \|\;\;\hbox{and}\;\;\|\hw_\alpha\|\leq C k^{n-4} \|\hat r_\alpha\|.
\end{equation}
Existence and uniqueness of solutions to \eqref{ss2} satisfying \eqref{ss3} follows from a contraction map argument. Indeed,
$\left[\begin{array}{c}
\ow_\alpha \\\hw_\alpha
\end{array}\right]$ is a solution if and only if it is a fixed point to $A_\alpha \left[\begin{array}{c}
\ow_\alpha \\\hw_\alpha
\end{array}\right] := T_\alpha^{-1} \left( \left[\begin{array}{c}
\overline r_\alpha -\gamma_\alpha \mathds{1}_{k\times h} \hat w_\alpha \\ \hat r_\alpha -\gamma_\alpha \mathds{1}_{h\times k} \overline w_\alpha
\end{array}\right] \right)$, where
we denote by $T_\alpha$ the linear map $T_\alpha \left( \left[\begin{array}{c}
\ow_\alpha \\\hw_\alpha
\end{array}\right] \right) = \left( \left[\begin{array}{c}
\overline H_\alpha \ow_\alpha \\ \hat H_\alpha \hw_\alpha
\end{array}\right] \right)$, which is invertible for vectors that are orthogonal to $\ocos $, $\osin$, in their first components, and to $\hcos$, $\hsin$ in their second components.
Let
$$
B_r := \{ \left[\begin{array}{c}
\ow_\alpha \\\hw_\alpha
\end{array}\right] \in K_\alpha \, : \,  \|\ow_\alpha \|\leq r k^{n-4} \|\overline r_\alpha \|\;\;\hbox{and}\;\;\|\hw_\alpha\|\leq r k^{n-4} \|\hat r_\alpha\| \},
$$
where $K_\alpha := \{ \left[\begin{array}{c}
\ow_\alpha \\\hw_\alpha
\end{array}\right]  \in \R^{k+h} \, : \, \ow_\alpha \cdot \ocos = \ow_\alpha \cdot \osin =0, \hw_\alpha \cdot \hcos = \hw_\alpha \cdot \hsin =0 \}$. Then, choosing $r$ large but fixed, and thanks to \eqref{ss0}, one has that $A_\alpha$ is a contraction in $B_r$. This gives the existence of solutions to \eqref{ss2}, satisfying \eqref{ss3}.

\medskip
\noindent
Summarizing the above arguments, we have

\begin{lemma} \label{uno}
Let $\alpha \in \{5, \ldots ,n \}$ be fixed. Then system \eqref{semplice} is solvable, and the solution has the form
\begin{equation}\begin{split} \label{solsemplice}
c_\alpha&=\left[\begin{array}{c}
0\\\ow_\alpha \\ \hw_\alpha
\end{array}\right]
+t\left[\begin{array}{c}
1\\-\oone\\-\hone
\end{array}\right] + \overline \nu_1 \left[\begin{array}{c}
0\\\ocos\\ \hzero
\end{array}\right] + \overline \nu_2 \left[\begin{array}{c}
0\\\osin \\ \hzero
\end{array}\right] + \hat \nu_1 \left[\begin{array}{c}
0\\\ozero\\ \hcos
\end{array}\right] +\hat \nu_2 \left[\begin{array}{c}
0\\\ozero\\ \hsin
\end{array}\right],
\end{split}
\end{equation}
for any values of $t , \overline \nu_1 , \overline \nu_2 , \hat \nu_1 , \hat \nu_2 \in \R$.
In the above formula $\left[\begin{array}{c}
\ow_\alpha\\ \hw_\alpha
\end{array}\right]$ is the unique solution to \eqref{ss2}, and satisfies \eqref{ss3}.
\end{lemma}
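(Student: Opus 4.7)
\textbf{Proof proposal for Lemma \ref{uno}.} The plan is to carry out, in a self-contained way, the reduction sketched in the paragraphs preceding the statement, and then close the existence argument for the reduced $(k+h)$-dimensional system by a contraction. First I would verify the two structural facts: (a) the vector $[1,-\oone,-\hone]^T$ lies in $\ker(\tilde{H}_\alpha)$, which follows from differentiating the identity $L({\bf z}_\alpha)=0$ with ${\bf z}_\alpha = z_\alpha$ for $\alpha \ge 5$ and reading off that row $1$ of $\tilde H_\alpha$ equals the sum of the remaining rows; and (b) the compatibility condition $r_\alpha\cdot[1,-\oone,-\hone]^T=0$, which comes from the same identity tested against $\varphi^\perp$, yielding \eqref{rowalpha}. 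Together these show that a solution of $\tilde H_\alpha c_\alpha = r_\alpha$ exists if and only if the reduced system $H_\alpha \tilde c_\alpha = \tilde r_\alpha$ is solvable, and the general solution carries an arbitrary multiple of $[1,-\oone,-\hone]^T$.

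Next I would exploit the block structure
\[
H_\alpha =
\begin{bmatrix} \overline H_\alpha & \gamma_\alpha \mathds{1}_{k\times h} \\ \gamma_\alpha \mathds{1}_{h\times k} & \hat H_\alpha \end{bmatrix},
\]
where $\overline H_\alpha$ and $\hat H_\alpha$ are circulant (by the $2\pi/k$- and $2\pi/h$-invariance of the $\bar Z_{\alpha j}$, $\hat Z_{\alpha l}$), and where the off-diagonal coupling satisfies the decay estimate $|\gamma_\alpha|\le Ck^{4-2n}$ stated in \eqref{ss0} (proved by integrating $L(\bar Z_{\alpha 1})\hat Z_{\alpha 1}$ and using that the supports of the two bubbles are separated by $O(1)$ while each carries an $L^2$ mass of order $\mu^{(n-2)/2}\lambda^{(n-2)/2}\sim k^{-(n-2)}h^{-(n-2)}$ against a multiplier of order $1$). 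I would then recall the key result \cite[Proposition~5.1]{MW}: each of the uncoupled problems $\overline H_\alpha \ow_\alpha = \os_\alpha$, $\hat H_\alpha \hw_\alpha = \hs_\alpha$ is solvable precisely when $\os_\alpha$ is orthogonal to $\ocos,\osin$ and $\hs_\alpha$ is orthogonal to $\hcos,\hsin$, with two-dimensional kernels spanned by these vectors, and that the unique solutions in the orthogonal complement obey the bounds $\|\ow_\alpha\|\le Ck^{n-4}\|\os_\alpha\|$, $\|\hw_\alpha\|\le Ck^{n-4}\|\hs_\alpha\|$.

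To obtain the coupled solution, I would set up the fixed-point map
\[
A_\alpha \begin{bmatrix}\ow_\alpha\\\hw_\alpha\end{bmatrix} := T_\alpha^{-1}\!\left(\begin{bmatrix}\overline r_\alpha - \gamma_\alpha\mathds{1}_{k\times h}\hw_\alpha\\ \hat r_\alpha - \gamma_\alpha\mathds{1}_{h\times k}\ow_\alpha\end{bmatrix}\right)
\]
acting on the subspace $K_\alpha$ where $\ow_\alpha\perp\{\ocos,\osin\}$ and $\hw_\alpha\perp\{\hcos,\hsin\}$. The map is well-defined because $\mathds{1}_{k\times h}\hw_\alpha$ is a multiple of $\oone$ (hence orthogonal to $\ocos$ and $\osin$ since $k\ge 2$) and analogously for the second component; and one must also check $\overline r_\alpha\perp\{\ocos,\osin\}$, $\hat r_\alpha\perp\{\hcos,\hsin\}$, which follows by testing $L(\varphi^\perp)=0$ against ${\bf z}_{n+\alpha+4},{\bf z}_{2n+\alpha+2},{\bf z}_{3n+\alpha-2},{\bf z}_{4n+\alpha-6}$ and using the expressions in Appendix \ref{appe1}. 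Combining $\|T_\alpha^{-1}\|\le Ck^{n-4}$ with $|\gamma_\alpha|\le Ck^{4-2n}$, the contraction constant is $O(k^{-n})$, so $A_\alpha$ is a contraction on the ball $B_r$ of radius $r(k^{n-4}\|\overline r_\alpha\|,k^{n-4}\|\hat r_\alpha\|)$ for any fixed $r$ large enough, yielding existence, uniqueness in $K_\alpha$, and the bound \eqref{ss3}. Adding back the four free directions $\ocos,\osin,\hcos,\hsin$ from the kernel of $T_\alpha$ and the one direction $[1,-\oone,-\hone]^T$ from $\ker(\tilde H_\alpha)$ produces the five-parameter family \eqref{solsemplice}.

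The main obstacle I anticipate is the coupling: one cannot simply solve the two block diagonal systems separately because each right-hand side involves the other unknown. However, the decay $|\gamma_\alpha|=O(k^{4-2n})$ is very strong compared to the moderate norm growth $\|T_\alpha^{-1}\|=O(k^{n-4})$ of the diagonal inverses, and this is precisely what makes the Banach fixed-point argument work with a gain of $k^{-n}$ in the contraction constant. Verifying the four orthogonality conditions on $\overline r_\alpha$ and $\hat r_\alpha$ (which is what makes the reduced system solvable) is essentially bookkeeping once the identifications in Appendix \ref{appe1} between ${\bf z}_\beta$ and combinations of $Z_{\alpha 0},\overline Z_{\alpha j},\hat Z_{\alpha l}$ are in hand.
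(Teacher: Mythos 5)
Your proposal is correct and follows essentially the same route as the paper: identification of the kernel direction $[1,-\oone,-\hone]^T$ and the compatibility condition from $L({\bf z}_\alpha)=0$, reduction to the block system with circulant diagonal blocks, invocation of \cite[Proposition 5.1]{MW}, and the contraction argument driven by the smallness of $\gamma_\alpha$ against $\|T_\alpha^{-1}\|\le Ck^{n-4}$. The only minor imprecision is the claimed contraction constant $O(k^{-n})$, which omits the operator norm of $\mathds{1}_{k\times h}$ (of order $\sqrt{kh}$); this only changes the rate to $O(k^{1-n})$ and does not affect the conclusion.
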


\medskip
\section{Solving the first system in \eqref{finalSyst}}

This section is devoted to solve the first system in \eqref{finalSyst}, namely
\begin{equation}\label{difficile}
M_1\left[
\begin{array}{c}
c_0\\
c_1\\
c_2\\
c_3\\
c_4
\end{array}
\right]=
\left[
\begin{array}{c}
r_0\\
r_1\\
r_2\\
r_3\\
r_4
\end{array}
\right].
\end{equation}
Using \eqref{Halpha} and \eqref{Z0big} and the fact that $L({\bf z}_\alpha)=0$ for every $\alpha=0,\ldots,4$, together with the result in Section \ref{appe1},
we observe that
\begin{equation*}\begin{split}
\hbox{row}_1(M_1)=&\sum_{i=1}^{k}\hbox{row}_{1+i}(M_1)+\hbox{row}_{k+h+2+i}(M_1)+\sum_{i=1}^{h}\hbox{row}_{k+1+i}(M_1)+\hbox{row}_{4k+3h+4+i}(M_1),
\end{split}\end{equation*}
\begin{equation*}\begin{split}
\hbox{row}_{k+h+2}(M_1)=&\frac{1}{\sqrt{1-\mu^2}}\left[\sum_{i=1}^{k}\cos\otheta_i\hbox{row}_{k+h+2+i}(M_1)-\sin\otheta_i \hbox{row}_{2k+2h+3+i}(M_1)\right]\\
&+\sum_{i=1}^{h}\hbox{row}_{2k+h+2+i}(M_1),
\end{split}\end{equation*}

\begin{equation*}\begin{split}
\hbox{row}_{2k+2h+3}(M_1)=&\frac{1}{\sqrt{1-\mu^2}}\left[\sum_{i=1}^{k}\sin\otheta_i\hbox{row}_{k+h+2+i}(M_1)+\cos\otheta_i \hbox{row}_{2k+2h+3+i}(M_1)\right]\\
&+\sum_{i=1}^{h}\hbox{row}_{3k+2h+3+i}(M_1),
\end{split}\end{equation*}

\begin{equation*}\begin{split}
&\hbox{row}_{3k+3h+4}(M_1)=\sum_{i=1}^{k}\hbox{row}_{3k+3h+4+i}(M_1)\\
&\qquad+\frac{1}{\sqrt{1-\lambda^2}}\left[\sum_{i=1}^{h}\cos\htheta_i\hbox{row}_{4k+3h+4+i}(M_1)-\sin\htheta_i \hbox{row}_{5k+4h+5+i}(M_1)\right],
\end{split}\end{equation*}

\begin{equation*}\begin{split}
&\hbox{row}_{4k+4h+5}(M_1)=\sum_{i=1}^{k}\hbox{row}_{4k+4h+5+i}(M_1)\\
&\qquad+\frac{1}{\sqrt{1-\lambda^2}}\left[\sum_{i=1}^{h}\sin\htheta_i\hbox{row}_{4k+3h+4+i}(M_1)+\cos\htheta_i \hbox{row}_{5k+4h+5+i}(M_1)\right].
\end{split}\end{equation*}
From these facts we deduce that system \eqref{difficile}
is solvable only if
\begin{equation}\label{ortw}
\left[
\begin{array}{c}
r_0\\r_1\\r_2\\r_3\\r_4
\end{array}
\right]\cdot w_j=0,\qquad j=0,1,\ldots,4,
\end{equation}
where (recall definitions \eqref{cossin} and \eqref{onezero})
\begin{equation}\label{vectors11}
w_0:=
\left[
\begin{array}{c}
1\\-\oone\\-\hone\\0\\-\oone\\\hzero\\0\\\ozero\\\hzero\\0\\\ozero\\-\hone\\0\\\ozero\\\hzero
\end{array}
\right],\;\;
w_1:=\left[
\begin{array}{c}
0\\\ozero\\\hzero\\1\\-\frac{1}{\sqrt{1-\mu^2}}\ocos\\-\hone\\0\\\frac{1}{\sqrt{1-\mu^2}}\osin\\\hzero\\0\\\ozero\\\hzero\\0\\\ozero\\\hzero
\end{array}
\right],\;\;
w_2:=\left[
\begin{array}{c}
0\\\ozero\\\hzero\\0\\-\frac{1}{\sqrt{1-\mu^2}}\osin\\\hzero\\1\\-\frac{1}{\sqrt{1-\mu^2}}\ocos\\-\hone\\0\\\ozero\\\hzero\\0\\\ozero\\\hzero
\end{array}
\right],\end{equation}
\begin{equation}\label{vectors12}
w_3:=\left[
\begin{array}{c}
0\\\ozero\\\hzero\\0\\\ozero\\\hzero\\0\\\ozero\\\hzero\\1\\-\oone\\-\frac{1}{\sqrt{1-\lambda^2}}\hcos\\0\\\ozero\\\frac{1}{\sqrt{1-\lambda^2}}\hsin
\end{array}
\right],\;\;
w_4:=\left[
\begin{array}{c}
0\\\ozero\\\hzero\\0\\\ozero\\\hzero\\0\\\ozero\\\hzero\\0\\\ozero\\-\frac{1}{\sqrt{1-\lambda^2}}\hsin\\1\\-\oone\\-\frac{1}{\sqrt{1-\lambda^2}}\hcos
\end{array}
\right],
\end{equation}
which belong all to $\ker(M_1)$. On the other hand, using again that $L({\bf z}_\alpha)=0$ for every $\alpha=0,\ldots,4$ one sees that
the vectors $r_\alpha$ satisfy the following relations
\begin{equation*}\begin{split}
\hbox{row}_1(r_0)=&\sum_{j=2}^{k+1}\left[\hbox{row}_j(r_0)+\hbox{row}_j(r_1)\right]+\sum_{l=k+2}^{h+k+1}\left[\hbox{row}_l(r_0)+\hbox{row}_l(r_3)\right]\\
\hbox{row}_1(r_1)=&\frac{1}{\sqrt{1-\mu^2}}\sum_{j=2}^{k+1}\left[\cos\otheta_{j-1}\hbox{row}_j(r_1)-\sin\otheta_{j-1}\hbox{row}_j(r_2)\right]+\sum_{l=k+2}^{h+k+1}\hbox{row}_l(r_1),\\
\hbox{row}_1(r_2)=&\frac{1}{\sqrt{1-\mu^2}}\sum_{j=2}^{k+1}\left[\sin\otheta_{j-1}\hbox{row}_j(r_1)+\cos\otheta_{j-1}\hbox{row}_j(r_2)\right]+\sum_{l=k+2}^{h+k+1}\hbox{row}_l(r_2),\\
\hbox{row}_1(r_3)=&\sum_{j=2}^{k+1}\hbox{row}_j(r_3)+\frac{1}{\sqrt{1-\lambda^2}}\sum_{l=k+2}^{h+k+1}\left[\cos\htheta_{l-1}\hbox{row}_l(r_3)-\sin\htheta_{l-1}\hbox{row}_l(r_4)\right],\\
\hbox{row}_1(r_4)=&\sum_{j=2}^{k+1}\hbox{row}_j(r_4)+\frac{1}{\sqrt{1-\lambda^2}}\sum_{l=k+2}^{h+k+1}\left[\sin\htheta_{l-1}\hbox{row}_l(r_3)+\cos\htheta_{l-1}\hbox{row}_l(r_4)\right].
\end{split}\end{equation*}
These facts imply that  the orthogonality conditions \eqref{ortw} are satisfied, and thus \eqref{difficile} is solvable. The solution to \eqref{difficile} has the form
\begin{equation*}
\left[
\begin{array}{c}
c_0\\c_1\\c_2\\c_3\\c_4
\end{array}
\right]=\left[
\begin{array}{c}
0\\\tilde{c}_0\\0\\\tilde{c}_1\\0\\\tilde{c}_2\\0\\\tilde{c}_3\\0\\\tilde{c}_4
\end{array}\right]
+tw_0+sw_1+rw_2+uw_3+vw_4,
\end{equation*}
for any values of $t,s,r,u,v\in\R,$
where $\tilde{c}_\alpha :=\left[\begin{array}{c}
\oc_\alpha\\\hc_{\alpha}
\end{array}\right]$ are  solutions of
\begin{equation}\label{systemQ}
Q\left[
\begin{array}{c}
\tilde{c}_0\\\tilde{c}_1\\\tilde{c}_2\\\tilde{c}_3\\\tilde{c}_4
\end{array}\right]=
\left[
\begin{array}{c}
\tilde{r}_0\\\tilde{r}_1\\\tilde{r}_2\\\tilde{r}_3\\\tilde{r}_4
\end{array}\right], \quad \tilde r_\alpha :=\left[\begin{array}{c}
\overline r_\alpha\\\hat r_{\alpha}
\end{array}\right]
\end{equation}
Here $Q$ is the square matrix of dimension $[5 (k+h)\times (k+h)]^2$ defined as
\begin{equation*}
Q:=\left[\begin{array}{ccccc}
A & B & C & D & E\\
B^T & F & G & H & I\\
C^T & G^T & J & K & L\\
D^T & H^T & K^T & M & N\\
E^T & I^T & L^T & N^T & P
\end{array}\right],
\end{equation*}
where every submatrix of $Q$ has dimension $(k+h)\times (k+h)$ and entries of the form
$$\int_{\R^n}L(V)W,$$
where
\begin{itemize}
\item[(i)] In $A$: $V,W\in \{(\oZ_{0j})_{j=1,\ldots,k},(\hZ_{0l})_{l=1,\ldots,h}\}$.
\item[(ii)] In $B$: $V\in \{(\oZ_{0j})_{j=1,\ldots,k},(\hZ_{0l})_{l=1,\ldots,h}\}$, $W\in \{(\oZ_{1j})_{j=1,\ldots,k},(\hZ_{1l})_{l=1,\ldots,h}\}$.
\item[(iii)] In $C$: $V\in \{(\oZ_{0j})_{j=1,\ldots,k},(\hZ_{0l})_{l=1,\ldots,h}\}$, $W\in \{(\oZ_{2j})_{j=1,\ldots,k},(\hZ_{2l})_{l=1,\ldots,h}\}$.
\item[(iv)] In $D$: $V\in \{(\oZ_{0j})_{j=1,\ldots,k},(\hZ_{0l})_{l=1,\ldots,h}\}$, $W\in \{(\oZ_{3j})_{j=1,\ldots,k},(\hZ_{3l})_{l=1,\ldots,h}\}$.
\item[(v)] In $E$: $V\in \{(\oZ_{0j})_{j=1,\ldots,k},(\hZ_{0l})_{l=1,\ldots,h}\}$, $W\in \{(\oZ_{4j})_{j=1,\ldots,k},(\hZ_{4l})_{l=1,\ldots,h}\}$.
\item[(vi)] In $F$: $V,W\in \{(\oZ_{1j})_{j=1,\ldots,k},(\hZ_{1l})_{l=1,\ldots,h}\}$.
\item[(vii)] In $G$: $V\in \{(\oZ_{1j})_{j=1,\ldots,k},(\hZ_{1l})_{l=1,\ldots,h}\}$, $W\in \{(\oZ_{2j})_{j=1,\ldots,k},(\hZ_{2l})_{l=1,\ldots,h}\}$.
\item[(viii)] In $H$: $V\in \{(\oZ_{1j})_{j=1,\ldots,k},(\hZ_{1l})_{l=1,\ldots,h}\}$, $W\in \{(\oZ_{3j})_{j=1,\ldots,k},(\hZ_{3l})_{l=1,\ldots,h}\}$.
\item[(ix)] In $I$: $V\in \{(\oZ_{1j})_{j=1,\ldots,k},(\hZ_{1l})_{l=1,\ldots,h}\}$, $W\in \{(\oZ_{4j})_{j=1,\ldots,k},(\hZ_{4l})_{l=1,\ldots,h}\}$.
\item[(x)] In $J$: $V,W\in \{(\oZ_{2j})_{j=1,\ldots,k},(\hZ_{2l})_{l=1,\ldots,h}\}$.
\item[(xi)] In $K$: $V\in \{(\oZ_{2j})_{j=1,\ldots,k},(\hZ_{2l})_{l=1,\ldots,h}\}$, $W\in \{(\oZ_{3j})_{j=1,\ldots,k},(\hZ_{3l})_{l=1,\ldots,h}\}$.
\item[(xii)] In $L$: $V\in \{(\oZ_{2j})_{j=1,\ldots,k},(\hZ_{2l})_{l=1,\ldots,h}\}$, $W\in \{(\oZ_{4j})_{j=1,\ldots,k},(\hZ_{4l})_{l=1,\ldots,h}\}$.
\item[(xiii)] In $M$: $V,W\in \{(\oZ_{3j})_{j=1,\ldots,k},(\hZ_{3l})_{l=1,\ldots,h}\}$.
\item[(xiv)] In $N$: $V\in \{(\oZ_{3j})_{j=1,\ldots,k},(\hZ_{3l})_{l=1,\ldots,h}\}$, $W\in \{(\oZ_{4j})_{j=1,\ldots,k},(\hZ_{4l})_{l=1,\ldots,h}\}$.
\item[(xv)] In $P$: $V,W\in \{(\oZ_{4j})_{j=1,\ldots,k},(\hZ_{4l})_{l=1,\ldots,h}\}$.
\end{itemize}
Let us analize the structure of every matrix, where we will make use of the notation
\begin{equation}\label{defBeta}
\beta_{\alpha_1,\alpha_2}:=\int L(\oZ_{\alpha_11})\hZ_{\alpha_2 1},
\end{equation}
$$\otheta_j:=\frac{2\pi}{k}(j-1)\qquad\mbox{ and }\qquad \htheta_l:=\frac{2\pi}{h}(l-1).$$

\medskip
\noindent {\bf Matrix A.} Due to the invariance properties one can check that for $i, j=1,\ldots,k$ and $l,m=1,\ldots, h$,
\begin{equation*}\begin{split}
\int_{\R^n}L(\oZ_{0i})\oZ_{0j}&=\int_{\R^n}L(\oZ_{01})\oZ_{0,|i-j|+1},\\
\int_{\R^n}L(\hZ_{0l})\hZ_{0m}&=\int_{\R^n}L(\hZ_{01})\hZ_{0,|l-m|+1},\\
\int_{\R^n}L(\oZ_{0j})\hZ_{0l}&=\int_{\R^n}L(\hZ_{0l})\oZ_{0j}=\int_{\R^n}L(\oZ_{01})\hZ_{01}=\beta_{00}.
\end{split}\end{equation*}
Thus we can write
$$A=\left[
\begin{array}{cc}
\overline{A} & A_1:=\beta_{00} \mathds{1}_{k\times h}\\
A_2:=\beta_{00} \mathds{1}_{h\times k} & \hat{A}
\end{array}
\right],$$
where $\overline{A}$ and $\hat{A}$ are circulant matrices of dimensions $(k\times k)$ and $(h\times h)$ (see \cite{KS}).

\medskip
\noindent {\bf Matrix B.} Applying the invariance properties like in matrix A we obtain
\begin{equation*}
B=\left[
\begin{array}{cc}
\overline{B} & B_1:=\left(\int L(\oZ_{0j})\hZ_{1l}\right)_{j=1,\ldots,k,\, l=1,\ldots,h}\\
B_2:=\left(\int L(\hZ_{0l})\oZ_{1j}\right)_{l=1,\ldots,h,\, j=1,\ldots,k} & 0
\end{array}
\right],
\end{equation*}
where $\overline{B}$ is a $(k\times k)$ circulant matrix. Rotating in the $(y_1,y_2)$ and $(y_3,y_4)$ one obtains
\begin{equation*}\begin{split}
\int L(\oZ_{0j})\hZ_{1l}&=\cos \otheta_j\beta_{01}-\sin\otheta_j\beta_{02}=\cos \otheta_j\beta_{01},\\
\int L(\hZ_{0l})\oZ_{1j}&=\cos \otheta_j\beta_{10}-\sin\otheta_j\beta_{20}=\cos \otheta_j\beta_{10},
\end{split}\end{equation*}
for $j=1,\ldots,k,\, l=1,\ldots,h$, since  $\beta_{02}=\beta_{20}=0$ due to the symmetry properties. Notice that both expressions are independent of $l$.

\medskip
\noindent {\bf Matrix C.} Likewise,
\begin{equation*}
C=\left[
\begin{array}{cc}
\overline{C} & C_1:=\left(\int L(\oZ_{0j})\hZ_{2l}\right)_{j=1,\ldots,k,\, l=1,\ldots,h}\\
C_2:=\left(\int L(\hZ_{0l})\oZ_{2j}\right)_{l=1,\ldots,h,\, j=1,\ldots,k} & 0
\end{array}
\right],
\end{equation*}
whith $\overline{C}$ being a $(k\times k)$ circulant matrix and
\begin{equation*}\begin{split}
\int L(\oZ_{0j})\hZ_{2l}&=\sin \otheta_j\beta_{01},\qquad j=1,\ldots,k,\, l=1,\ldots,h,\\
\int L(\hZ_{0l})\oZ_{2j}&=\sin \otheta_j\beta_{10},\qquad j=1,\ldots,k,\, l=1,\ldots,h.
\end{split}\end{equation*}

\medskip
\noindent {\bf Matrix D.}
\begin{equation*}
D=\left[
\begin{array}{cc}
0 & D_1:=\left(\int L(\oZ_{0j})\hZ_{3l}\right)_{j=1,\ldots,k,\, l=1,\ldots,h}\\
D_2:=\left(\int L(\hZ_{0l})\oZ_{3j}\right)_{l=1,\ldots,h,\, j=1,\ldots,k} & \hat{D}
\end{array}
\right],
\end{equation*}
whith $\hat{D}$ being a $(h\times h)$ circulant matrix and
\begin{equation*}\begin{split}
\int L(\oZ_{0j})\hZ_{3l}&=\cos \htheta_l\beta_{03},\qquad j=1,\ldots,k,\, l=1,\ldots,h,\\
\int L(\hZ_{0l})\oZ_{3j}&=\cos\htheta_l\beta_{30},\qquad j=1,\ldots,k,\, l=1,\ldots,h,
\end{split}\end{equation*}
where we have used that $\beta_{04}=\beta_{40}=0$.
\medskip

\noindent {\bf Matrix E.}
\begin{equation*}
E=\left[
\begin{array}{cc}
0 & E_1:=\left(\int L(\oZ_{0j})\hZ_{4l}\right)_{j=1,\ldots,k,\, l=1,\ldots,h}\\
E_2:=\left(\int L(\hZ_{0l})\oZ_{4j}\right)_{l=1,\ldots,h,\, j=1,\ldots,k} & \hat{E}
\end{array}
\right],
\end{equation*}
whith $\hat{E}$ being a $(h\times h)$ circulant matrix and
\begin{equation*}\begin{split}
\int L(\oZ_{0j})\hZ_{4l}&=\sin \htheta_l\beta_{03},\qquad j=1,\ldots,k,\, l=1,\ldots,h,\\
\int L(\hZ_{0l})\oZ_{4j}&=\sin \htheta_l\beta_{30},\qquad j=1,\ldots,k,\, l=1,\ldots,h.
\end{split}\end{equation*}

\medskip
\noindent {\bf Matrix F.}
\begin{equation*}
F=\left[
\begin{array}{cc}
\overline{F} & F_1:=\left(\int L(\oZ_{1j})\hZ_{1l}\right)_{j=1,\ldots,k,\, l=1,\ldots,h}\\
F_2:=\left(\int L(\hZ_{1l})\oZ_{1j}\right)_{l=1,\ldots,h,\, j=1,\ldots,k} & \hat{F}
\end{array}
\right],
\end{equation*}
where $\overline{F}$ and $\hat{F}$ are $(k\times k)$ and $(h\times h)$ circulant matrices respectively and
\begin{equation*}\begin{split}
\int L(\oZ_{1j})\hZ_{1l}&=\int L(\hZ_{1l})\oZ_{1j}= \cos^2\otheta_j\beta_{11}+\sin^2\otheta_j\beta_{22},
\end{split}\end{equation*}
for $j=1,\ldots,k,\, l=1,\ldots,h,$ using that $\beta_{12}=\beta_{21}=0$.

\medskip
\noindent {\bf Matrix G.}
\begin{equation*}
G=\left[
\begin{array}{cc}
\overline{G} & G_1:=\left(\int L(\oZ_{1j})\hZ_{2l}\right)_{j=1,\ldots,k,\, l=1,\ldots,h}\\
G_2:=\left(\int L(\hZ_{1l})\oZ_{2j}\right)_{l=1,\ldots,h,\, j=1,\ldots,k} & 0
\end{array}
\right],
\end{equation*}
where $\overline{G}$ is a $(k\times k)$ circulant matrix and
\begin{equation*}\begin{split}
\int L(\oZ_{1j})\hZ_{2l}&= \int L(\hZ_{1l})\oZ_{2j}= \cos\otheta_j\sin\otheta_j\beta_{11}-\sin\otheta_j\cos\otheta_j\beta_{22},
\end{split}\end{equation*}
for $j=1,\ldots,k,\, l=1,\ldots,h.$

\medskip
\noindent {\bf Matrix H.}
\begin{equation*}
H=\left[
\begin{array}{cc}
0 & H_1:=\left(\int L(\oZ_{1j})\hZ_{3l}\right)_{j=1,\ldots,k,\, l=1,\ldots,h}\\
H_2:=\left(\int L(\hZ_{1l})\oZ_{3j}\right)_{l=1,\ldots,h,\, j=1,\ldots,k} & 0
\end{array}
\right],
\end{equation*}
where, since $\beta_{14}=\beta_{41}=\beta_{23}=\beta_{32}=\beta_{24}=\beta_{42}=0$,
\begin{equation*}\begin{split}
\int L(\oZ_{1j})\hZ_{3l}&= \cos\otheta_j\cos\htheta_l\beta_{13},\\
\int L(\hZ_{1l})\oZ_{3j}&= \cos\otheta_j\cos\htheta_l\beta_{31},
\end{split}\end{equation*}
for $j=1,\ldots,k,\, l=1,\ldots,h.$

\medskip
\noindent {\bf Matrix I.}
\begin{equation*}
I=\left[
\begin{array}{cc}
0 & I_1:=\left(\int L(\oZ_{1j})\hZ_{4l}\right)_{j=1,\ldots,k,\, l=1,\ldots,h}\\
I_2:=\left(\int L(\hZ_{1l})\oZ_{4j}\right)_{l=1,\ldots,h,\, j=1,\ldots,k} & 0
\end{array}
\right],
\end{equation*}
where
\begin{equation*}\begin{split}
\int L(\oZ_{1j})\hZ_{4l}&= \cos\otheta_j\sin\htheta_l\beta_{13},\\
\int L(\hZ_{1l})\oZ_{4j}&= \cos\otheta_j\sin\htheta_l\beta_{31},
\end{split}\end{equation*}
for $j=1,\ldots,k,\, l=1,\ldots,h.$

\medskip
\noindent {\bf Matrix J.}
\begin{equation*}
J=\left[
\begin{array}{cc}
\overline{J} & J_1:=\left(\int L(\oZ_{2j})\hZ_{2l}\right)_{j=1,\ldots,k,\, l=1,\ldots,h}\\
J_2:=\left(\int L(\hZ_{2l})\oZ_{2j}\right)_{l=1,\ldots,h,\, j=1,\ldots,k} & \hat{J}
\end{array}
\right],
\end{equation*}
where $\overline{J}$ and $\hat{J}$ are $(k\times k)$ and $(h\times h)$ circulant matrices respectively and
\begin{equation*}\begin{split}
\int L(\oZ_{2j})\hZ_{2l}&=\int L(\hZ_{2l})\oZ_{2j}= \sin^2\otheta_j\beta_{11}+\cos^2\otheta_j\beta_{22},
\end{split}\end{equation*}
for $j=1,\ldots,k,\, l=1,\ldots,h$.

\medskip
\noindent {\bf Matrix K.}
\begin{equation*}
K=\left[
\begin{array}{cc}
0 & K_1:=\left(\int L(\oZ_{2j})\hZ_{3l}\right)_{j=1,\ldots,k,\, l=1,\ldots,h}\\
K_2:=\left(\int L(\hZ_{2l})\oZ_{3j}\right)_{l=1,\ldots,h,\, j=1,\ldots,k} & 0
\end{array}
\right],
\end{equation*}
where
\begin{equation*}\begin{split}
\int L(\oZ_{2j})\hZ_{3l}&= \sin\otheta_j\cos\htheta_l\beta_{13},\\
\int L(\hZ_{2l})\oZ_{3j}&= \sin\otheta_j\cos\htheta_l\beta_{31},
\end{split}\end{equation*}
for $j=1,\ldots,k,\, l=1,\ldots,h.$

\medskip
\noindent {\bf Matrix L.}
\begin{equation*}
L=\left[
\begin{array}{cc}
0 & L_1:=\left(\int L(\oZ_{2j})\hZ_{4l}\right)_{j=1,\ldots,k,\, l=1,\ldots,h}\\
L_2:=\left(\int L(\hZ_{2l})\oZ_{4j}\right)_{l=1,\ldots,h,\, j=1,\ldots,k} & 0
\end{array}
\right],
\end{equation*}
where
\begin{equation*}\begin{split}
\int L(\oZ_{2j})\hZ_{4l}&= \sin\otheta_j\sin\htheta_l\beta_{13},\\
\int L(\hZ_{2l})\oZ_{4j}&= \sin\otheta_j\sin\htheta_l\beta_{31},
\end{split}\end{equation*}
for $j=1,\ldots,k,\, l=1,\ldots,h.$

\medskip
\noindent {\bf Matrix M.}
\begin{equation*}
M=\left[
\begin{array}{cc}
\overline{M} & M_1:=\left(\int L(\oZ_{3j})\hZ_{3l}\right)_{j=1,\ldots,k,\, l=1,\ldots,h}\\
M_2:=\left(\int L(\hZ_{3l})\oZ_{3j}\right)_{l=1,\ldots,h,\, j=1,\ldots,k} & \hat{M}
\end{array}
\right],
\end{equation*}
where $\overline{M}$ and $\hat{M}$ are $(k\times k)$ and $(h\times h)$ circulant matrices respectively and, since $\beta_{34}=\beta_{43}=0$,
\begin{equation*}\begin{split}
\int L(\oZ_{3j})\hZ_{3l}&=\int L(\hZ_{3l})\oZ_{3j}= \cos^2\htheta_l\beta_{33}+\sin^2\htheta_l\beta_{44},
\end{split}\end{equation*}
for $j=1,\ldots,k,\, l=1,\ldots,h.$

\medskip
\noindent {\bf Matrix N.}
\begin{equation*}
N=\left[
\begin{array}{cc}
0 & N_1:=\left(\int L(\oZ_{3j})\hZ_{4l}\right)_{j=1,\ldots,k,\, l=1,\ldots,h}\\
N_2:=\left(\int L(\hZ_{3l})\oZ_{4j}\right)_{l=1,\ldots,h,\, j=1,\ldots,k} & \hat{N}
\end{array}
\right],
\end{equation*}
where $\hat{N}$ is a $(h\times h)$ circulant matrix and
\begin{equation*}\begin{split}
\int L(\oZ_{3j})\hZ_{4l}&= \int L(\hZ_{3l})\oZ_{4j}= \cos\htheta_l\sin\htheta_l\beta_{33}-\sin\htheta_l\cos\htheta_l\beta_{44},
\end{split}\end{equation*}
for $j=1,\ldots,k,\, l=1,\ldots,h.$

\medskip
\noindent {\bf Matrix P.}
\begin{equation*}
P=\left[
\begin{array}{cc}
\overline{P} & P_1:=\left(\int L(\oZ_{4j})\hZ_{4l}\right)_{j=1,\ldots,k,\, l=1,\ldots,h}\\
P_2:=\left(\int L(\hZ_{4l})\oZ_{4j}\right)_{l=1,\ldots,h,\, j=1,\ldots,k} & \hat{P}
\end{array}
\right],
\end{equation*}
where $\overline{P}$ and $\hat{P}$ are $(k\times k)$ and $(h\times h)$ circulant matrices respectively and
\begin{equation*}\begin{split}
\int L(\oZ_{4j})\hZ_{4l}&=\int L(\hZ_{4l})\oZ_{4j}= \sin^2\htheta_l\beta_{33}+\cos^2\htheta_l\beta_{44},
\end{split}\end{equation*}
for $j=1,\ldots,k,\, l=1,\ldots,h.$

\medskip
Notice that
$$\hat{F}=\hat{J}\qquad\mbox{and}\qquad\overline{M}=\overline{P}.$$

Henceforth, system \eqref{systemQ} can be decomposed in two different systems in the following way,
\begin{equation}\label{system1}
\left[
\begin{array}{ccccc}
\overline{A} &  \overline{B} &  \overline{C} & 0 & 0  \\
\overline{B}^T & \overline{F} & \overline{G} & 0 & 0\\
\overline{C}^T & \overline{G}^T & \overline{J} & 0 & 0 \\
0 & 0 & 0 & \overline{M} & 0 \\
0 & 0 & 0 & 0 & \overline{P} \\
\end{array}\right]
%\cdot
\left[
\begin{array}{c}
\overline{c}_0\\ \overline{c}_1\\ \overline{c}_2\\ \overline{c}_3\\ \overline{c}_4
\end{array}\right]
=
\left[
\begin{array}{c}
\overline{r}_0\\ \overline{r}_1\\ \overline{r}_2\\ \overline{r}_3\\ \overline{r}_4
\end{array}\right]
-
\left[
\begin{array}{ccccc}
A_1 &  B_1 &  C_1 & D_1 & E_1  \\
B_2^T & F_1 & G_1 & H_1 & I_1\\
C_2^T & G_2^T & J_1 & K_1 & L_1 \\
D_2^T & H_2^T & K_2^T & M_1 & N_1 \\
E_2^T & I_2^T & L_2^T & N_2^T & P_1 \\
\end{array}\right]
%\cdot
\left[
\begin{array}{c}
\hat{c}_0\\ \hat{c}_1\\ \hat{c}_2\\ \hat{c}_3\\ \hat{c}_4
\end{array}\right],
\end{equation}
\begin{equation}\label{system2}
\left[
\begin{array}{ccccc}
\hat{A} & 0 & 0 & \hat{D} & \hat{E}\\
0 & \hat{F} & 0 & 0 &0 \\
0 & 0 & \hat{J} & 0 & 0 \\
\hat{D}^T & 0 & 0  & \hat{M} & \hat{N}\\
\hat{E}^T & 0 & 0 & \hat{N}^T & \hat{P}
\end{array}\right]
\left[
\begin{array}{c}
\hat{c}_0\\ \hat{c}_1\\ \hat{c}_2\\ \hat{c}_3\\ \hat{c}_4
\end{array}\right]
=
\left[
\begin{array}{c}
\hat{r}_0\\ \hat{r}_1\\ \hat{r}_2\\ \hat{r}_3\\ \hat{r}_4
\end{array}\right]
-
\left[
\begin{array}{ccccc}
A_2 &  B_2 &  C_2 & D_2 & E_2  \\
B_1^T & F_2 & G_2 & H_2 & I_2\\
C_1^T & G_1^T & J_2 & K_2 & L_2 \\
D_1^T & H_1^T & K_1^T & M_2 & N_2 \\
E_1^T & I_1^T & L_1^T & N_1^T & P_2 \\
\end{array}\right]
%\cdot
\left[
\begin{array}{c}
\overline{c}_0\\ \overline{c}_1\\ \overline{c}_2\\ \overline{c}_3\\ \overline{c}_4
\end{array}\right].
\end{equation}
By \cite[Proposition 5.1]{MW} we know that the systems
\begin{equation*}
\left[
\begin{array}{ccccc}
\overline{A} &  \overline{B} &  \overline{C} & 0 & 0  \\
\overline{B}^T & \overline{F} & \overline{G} & 0 & 0\\
\overline{C}^T & \overline{G}^T & \overline{J} & 0 & 0 \\
0 & 0 & 0 & \overline{M} & 0 \\
0 & 0 & 0 & 0 & \overline{P} \\
\end{array}\right]
%\cdot
\left[
\begin{array}{c}
\overline{c}_0\\ \overline{c}_1\\ \overline{c}_2\\ \overline{c}_3\\ \overline{c}_4
\end{array}\right]
=
\left[
\begin{array}{c}
\overline{s}_0\\ \overline{s}_1\\ \overline{s}_2\\ \overline{s}_3\\ \overline{s}_4
\end{array}\right],
\end{equation*}
\begin{equation*}
\left[
\begin{array}{ccccc}
\hat{A} & 0 & 0 & \hat{D} & \hat{E}\\
0 & \hat{F} & 0 & 0 &0 \\
0 & 0 & \hat{J} & 0 & 0 \\
\hat{D}^T & 0 & 0  & \hat{M} & \hat{N}\\
\hat{E}^T & 0 & 0 &\hat{N}^T & \hat{P}
\end{array}\right]
\left[
\begin{array}{c}
\hat{c}_0\\ \hat{c}_1\\ \hat{c}_2\\ \hat{c}_3\\ \hat{c}_4
\end{array}\right]
=
\left[
\begin{array}{c}
\hat{s}_0\\ \hat{s}_1\\ \hat{s}_2\\ \hat{s}_3\\ \hat{s}_4
\end{array}\right]
\end{equation*}
are solvable if the orthogonality conditions
\begin{equation}\begin{split}\label{GeneralConditions}
&\os_2\cdot\oone=(\os_0+\os_1)\cdot\ocos=(\os_0+\os_1)\cdot\osin = 0,\\
&\os_3\cdot\ocos=\os_3\cdot\osin=0,\\
&\os_4\cdot\ocos=\os_4\cdot\osin=0,\\
&\hs_4\cdot\hone=(\hs_0+\hs_3)\cdot\hcos=(\hs_0+\hs_3)\cdot\hsin = 0,\\
&\hs_1\cdot\hcos=\hs_1\cdot\hsin=0,\\
&\hs_2\cdot\hcos=\hs_2\cdot\hsin=0,
\end{split}\end{equation}
hold. Moreover,
\begin{equation}\label{formCalpha}\begin{split}
&\left[\begin{array}{c}
\oc_0\\\oc_1\\\oc_2
\end{array}\right]=
\left[\begin{array}{c}
\ow_0\\\ow_1\\\ow_2
\end{array}\right]+\ot_1
\left[\begin{array}{c}
0\\0\\\oone
\end{array}\right]+\ot_2
\left[\begin{array}{c}
\ocos\\-\ocos\\0
\end{array}\right]+\ot_3
\left[\begin{array}{c}
\osin\\-\osin\\0
\end{array}\right],\;\;\;\forall \,\ot_1,\ot_2,\ot_3\in\R,\\
&\oc_3=\ow_3+\ot_4\ocos+\ot_5\osin\;\;\forall\,\ot_4,\ot_5\in\R,\\
&\oc_4=\ow_4+\ot_6\ocos+\ot_7\osin,\;\;\forall\,\ot_6,\ot_7\in\R,\\
&\hc_1=\hw_1+\hatt_1\hcos+\hatt_2\hsin\;\;\forall\,\hatt_1,\hatt_2\in\R,\\
&\hc_2=\hw_2+\hatt_3\hcos+\hatt_4\hsin,\;\;\forall\,\hatt_3,\hatt_4\in\R,\\
&\left[\begin{array}{c}
\hc_0\\\hc_3\\\hc_4
\end{array}\right]=
\left[\begin{array}{c}
\hw_0\\\hw_3\\\hw_4
\end{array}\right]+\hatt_5
\left[\begin{array}{c}
0\\0\\\hone
\end{array}\right]+\hatt_6
\left[\begin{array}{c}
\hcos\\-\hcos\\0
\end{array}\right]+\hatt_7
\left[\begin{array}{c}
\hsin\\-\hsin\\0
\end{array}\right],\;\;\;\forall \,\hatt_5,\hatt_6,\hatt_7\in\R,
\end{split}\end{equation}
with $\left[\begin{array}{c}\ow_0\\\ldots \\\ow_4\end{array}\right],\,\left[\begin{array}{c}\hw_0\\\ldots \\\hw_4\end{array}\right]$ fixed vectors such that
\begin{equation}\label{boundsW}
\|\left[\begin{array}{c}\ow_0\\\ldots\\ \ow_4\end{array}\right]\|\leq\frac{C}{k^n\mu^{n-2}}\|\left[\begin{array}{c}\os_0\\\ldots\\ \os_4\end{array}\right]\|,\;\;\;\;
\|\left[\begin{array}{c}\hw_0\\\ldots\\ \hw_4\end{array}\right]\|\leq\frac{C}{h^n\lambda^{n-2}}\|\left[\begin{array}{c}\hs_0\\\ldots\\ \hs_4\end{array}\right]\|.
\end{equation}
%\textcolor{red}{Thus we can assume that $\oc_0,\ldots,\oc_4,\hc_1,\ldots,\hc_4$ satisfy \eqref{GeneralConditions}}.
We will prove that \eqref{system1} and \eqref{system2} have a solution $\left[\begin{array}{c}\oc_0\\\hc_0\\\ldots\\\oc_4\\\hc_4\end{array}\right]$ in the space
$$X:=\left\{\left[\begin{array}{c}\oc_0\\\hc_0\\\ldots\\\oc_4\\\hc_4\end{array}\right]:\,
\begin{array}{cc}
\oc_3\cdot\ocos=\oc_3\cdot\osin=0, & \oc_4\cdot\ocos=\oc_4\cdot\osin=0,\\
\hc_1\cdot\hcos=\hc_1\cdot\hsin=0, & \hc_2\cdot\hcos=\hc_2\cdot\hsin=0.
\end{array}\right\}.$$
We need the following auxiliar result, whose proof follows straightforward using the same argument as in Lemma \ref{proj1} so we skip it.
\begin{lemma}\label{AuxLemma}
Let $h, g$ be functions in $\R^n$ such that $h(y)=h(e^{\frac{2\pi}{k}(j-1)}\overline{y},\hat{y},y')$ for all $j=1,\ldots, k$ and $g(y)=g(\overline{y},e^{\frac{2\pi}{h}(l-1)}\hat{y},y')$ for all $l=1,\ldots, h$. Then,
$$\int_{\R^n}\hat{Z}_{1l}(y)h(y)\,dy=\int_{\R^n}\hat{Z}_{2l}(y)h(y)\,dy=0,\; \forall \,l=1,\ldots, h,$$
$$\int_{\R^n}\overline{Z}_{3j}(y)g(y)\,dy=\int_{\R^n}\overline{Z}_{4j}(y)g(y)\,dy=0,\; \forall \,j=1,\ldots, k.$$
\end{lemma}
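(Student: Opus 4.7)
The plan is to adapt the rotation-and-translation trick that was used in the proof of Lemma~\ref{proj1} for the case $\alpha=3$. The essential observation is that the function
\[
F_l(y) := U_{\lambda,\eta_l}(y) + \hat\phi_l(y)
\]
is invariant under every rotation $R_j$ by angle $\theta_j := \tfrac{2\pi(j-1)}{k}$ in the $(y_1,y_2)$-plane (extended as the identity on the remaining coordinates). Indeed, each $\eta_l$ lies in the $(y_3,y_4)$-plane so $|y-\eta_l|^2 = y_1^2+y_2^2+|\hat y-\hat\eta_l|^2+|y'|^2$ depends on $(y_1,y_2)$ only through $y_1^2+y_2^2$; and the relation \eqref{cond3} together with the last symmetry in \eqref{cond4} propagates the invariance of $\hat\phi_1$ under rotation by $2\pi(j-1)/k$ in $(y_1,y_2)$ to each $\hat\phi_l$. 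The hypothesis on $h$ gives $h\circ R_j = h$ as well.

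Next I would introduce the two one-parameter families
\[
J(t) := \int_{\R^n} F_l(y + t e_1)\, h(y)\, dy, \qquad K(t) := \int_{\R^n} F_l(y + t e_2)\, h(y)\, dy,
\]
so that, by definition of $\hat Z_{1l},\hat Z_{2l}$, one has $J'(0) = \int \hat Z_{1l}\, h$ and $K'(0)=\int \hat Z_{2l}\, h$. Performing the change of variable $y=R_j z$ and invoking $h\circ R_j = h$ and $F_l\circ R_j = F_l$ rewrites these as
\[
J(t) = \int_{\R^n} F_l\bigl(z + t R_j^{-1} e_1\bigr) h(z)\, dz, \qquad K(t) = \int_{\R^n} F_l\bigl(z + t R_j^{-1} e_2\bigr) h(z)\, dz.
\]
Since $R_j^{-1} e_1 = \cos\theta_j\, e_1 - \sin\theta_j\, e_2$ and $R_j^{-1} e_2 = \sin\theta_j\, e_1 + \cos\theta_j\, e_2$, differentiating at $t=0$ and equating with the original expressions gives, for every $j=2,\ldots,k$, the linear system
\[
(1-\cos\theta_j)\!\int \hat Z_{1l}\,h + \sin\theta_j\!\int \hat Z_{2l}\,h = 0, \qquad -\sin\theta_j\!\int \hat Z_{1l}\,h + (1-\cos\theta_j)\!\int \hat Z_{2l}\,h = 0.
\]
Choosing any $j\geq 2$ (so $\theta_j\neq 0$ for $k$ large) the determinant $2(1-\cos\theta_j)$ is strictly positive, forcing $\int \hat Z_{1l} h = \int \hat Z_{2l} h = 0$ as claimed.

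The remaining two identities for $\overline Z_{3j}$ and $\overline Z_{4j}$ follow by the symmetric argument, exchanging the roles of the $(y_1,y_2)$ and $(y_3,y_4)$ planes, of the points $\xi_j$ and $\eta_l$, and of the integers $k$ and $h$: the relevant invariance is now that of $U_{\mu,\xi_j}+\overline\phi_j$ under rotations in $(y_3,y_4)$, which is provided by the last line of \eqref{cond2} combined with \eqref{cond1}. There is no serious obstacle in the argument; the only point requiring care is the systematic verification of the invariance properties of $F_l$ (respectively the analogous object built from $\overline\phi_j$) for \emph{all} indices $l$ and $j$, not just for the base index, by tracing the symmetry conditions \eqref{cond1}--\eqref{cond4} through the definitions.
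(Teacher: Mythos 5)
Your proof is correct and is essentially the argument the paper intends (it simply refers the reader back to the rotation--translation trick of Lemma \ref{proj1}): you translate $U_{\lambda,\eta_l}+\hat\phi_l$ in the $e_1$ and $e_2$ directions, use the invariance of both the integrand factor and $h$ under the discrete rotations in the $(y_1,y_2)$-plane, and compare derivatives at $t=0$. Your one genuine adaptation --- deriving a $2\times 2$ linear system with determinant $2(1-\cos\otheta_j)>0$ rather than first killing one integral by evenness as in Lemma \ref{proj1} --- is exactly what is needed here, since the hypotheses of this lemma do not include evenness of $h$ or $g$.
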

Let us focus on \eqref{system1}. By \eqref{eqPi},
\begin{equation*}\begin{split}
\overline{r}_2\cdot \oone&=\sum_{j=1}^k\int L(\varphi^\perp)\oZ_{2j}\\
&=\sum_{j=1}^k\left[\int L(\sum_{\alpha=0}^n\sum_{i=1}^k\oZ_{\alpha i})\oZ_{2j}+\int L(\sum_{\alpha=0}^n\sum_{l=1}^h\hZ_{\alpha l})\oZ_{2j}\right].
\end{split}\end{equation*}
Notice that the second term vanishes due to Lemma \ref{AuxLemma} since, by the symmetry properties of the functions,
$$\sum_{j=1}^k\int L(\sum_{\alpha=0}^n\sum_{l=1}^h\hZ_{\alpha l})\oZ_{2j}=\sum_{j=1}^k\int L(\sum_{l=1}^h\hZ_{2 l})\oZ_{2j}=\sum_{l=1}^h\int \hZ_{2l}L(\sum_{j=1}^k \oZ_{2j}),$$
and $L(\sum_{j=1}^k \oZ_{2j})$ is invariant under rotation of angle $\frac{2\pi}{k}(j-1)$ in the $(y_1,y_2)$ plane. Therefore,
\begin{equation}\begin{split}\label{ortOR2}
\overline{r}_2\cdot \oone&=\left(\int L(\sum_{\alpha=0}^n\sum_{i=1}^k \oZ_{\alpha i})\oZ_{21}\right) \sum_{j=1}^k \sin\otheta_j+\left(\int L(\sum_{\alpha=0}^k\sum_{i=1}^k \oZ_{\alpha i})\oZ_{22}\right) \sum_{j=1}^k \cos\otheta_j=0.
\end{split}\end{equation}
On the other hand, as a consequence of \cite[Lemma 6.1]{MW},
\begin{equation}
(\overline{r}_0+\overline{r}_1)\cdot\ocos=(\overline{r}_0+\overline{r}_1)\cdot\osin =0.
\end{equation}
Using the invariances under rotation in the planes $(y_1,y_2)$ and $(y_3,y_4)$ and Lemma \ref{AuxLemma} we get
\begin{equation*}\begin{split}
\int L(\varphi^\perp)\oZ_{3j}&=\int L(\sum_{\alpha=1}^n\sum_{i=1}^k\oZ_{\alpha i})\oZ_{3j}+\int L(\sum_{\alpha=1}^n\sum_{l=1}^h\hZ_{\alpha l})\oZ_{3j}\\
&=\int L(\sum_{\alpha=1}^n\sum_{i=1}^k\oZ_{\alpha i})\oZ_{31},
\end{split}\end{equation*}
and thus
\begin{equation}\label{ortOR3}
\overline{r}_3\cdot\ocos= \int L(\sum_{\alpha=1}^n\sum_{i=1}^k\oZ_{\alpha i})\oZ_{31}\left(\sum_{j=1}^k\cos{\theta_j}\right)=0.
\end{equation}
Analogously,
\begin{equation}\label{ortOR4}
\overline{r}_3\cdot\osin= \overline{r}_4\cdot\ocos= \overline{r}_4\cdot\osin= 0.
\end{equation}
Let us now check the last term in \eqref{system1}. We expect
\begin{eqnarray}
&&(C_2^T[\hc_0]+G_2^T[\hc_1]+J_1[\hc_2]+K_1[\hc_3]+L_1[\hc_4])\cdot\oone =0,\label{cond1}\\
&&(A_1[\hc_0]+B_1[\hc_1]+C_1[\hc_2]+D_1[\hc_3]+E_1[\hc_4]\nonumber\\
&&\;\; +B_2^T[\hc_0]+F_1[\hc_1]+G_1[\hc_2]+H_1[\hc_3]+I_1[\hc_4])\cdot \ocos =0,\label{cond2}\\
&&(A_1[\hc_0]+B_1[\hc_1]+C_1[\hc_2]+D_1[\hc_3]+E_1[\hc_4]\nonumber\\
&&\;\; +B_2^T[\hc_0]+F_1[\hc_1]+G_1[\hc_2]+H_1[\hc_3]+I_1[\hc_4])\cdot \osin =0,\label{cond3}\\
&&(D_2^T[\hc_0]+H_2^T[\hc_1]+K_2^T[\hc_2]+M_1[\hc_3]+N_1[\hc_4])\cdot\ocos =0,\label{cond4}\\
&&(D_2^T[\hc_0]+H_2^T[\hc_1]+K_2^T[\hc_2]+M_1[\hc_3]+N_1[\hc_4])\cdot\osin =0,\label{cond5}\\
&&(E_2^T[\hc_0]+I_2^T[\hc_1]+L_2^T[\hc_2]+N_2^T[\hc_3]+P_1[\hc_4])\cdot\ocos =0,\label{cond6}\\
&&(E_2^T[\hc_0]+I_2^T[\hc_1]+L_2^T[\hc_2]+N_2^T[\hc_3]+P_1[\hc_4])\cdot\osin =0,\label{cond7}
\end{eqnarray}
where  $\hc_0,\ldots,\hc_4$ satisfy
\begin{equation}\begin{split}\label{GeneralConditionsHat}
&\hc_1\cdot\hcos=\hc_1\cdot\hsin=0,\\
&\hc_2\cdot\hcos=\hc_2\cdot\hsin=0.
\end{split}\end{equation}
Notice that, since $C_2$ and $G_2$ have all their rows identical,
$$C_2^T[\hc_0]\cdot\oone = \beta_{10}\left(\sum_{l=1}^h \hc_{0l}\right)\left(\sum_{j=1}^k \sin\otheta_j\right) =0,$$
$$G_2^T[\hc_1]\cdot\oone= (\beta_{11}-\beta_{22})\left(\sum_{l=1}^h \hc_{1l}\right)\left(\sum_{j=1}^k \sin\otheta_j\cos\otheta_j\right) =0.$$
Likewise, using the definition of $J_1$ and Lemma \ref{AuxLemma},
$$J_1[\hc_2]\cdot\oone = \left(\sum_{l=1}^h \hc_{2l}\right) \int L\left(\sum_{j=1}^k \oZ_{2j}\right)\hZ_{11}=0,$$
since $L\left(\sum_{j=1}^k \oZ_{2j}\right)$ is invariant under rotation of angle $\theta_j$, and
$$K_1[\hc_3]\cdot\oone=\beta_{13}(\hc_3\cdot\hcos)\left(\sum_{j=1}^k\sin\otheta_j\right)=0,$$
$$L_1[\hc_4]\cdot\oone=\beta_{13}(\hc_4\cdot\hsin)\left(\sum_{j=1}^k\sin\otheta_j\right)=0.$$
Thus, \eqref{cond1} follows. Furthermore, again by \cite[Lemma 6.1]{MW},
$$(A_1+B_2^T)_{jl}=\int L(\hZ_{0l})(\oZ_{0j}+\oZ_{1j})=0,$$
$$(B_1+F_1)_{jl}=\int L(\hZ_{1l})(\oZ_{0j}+\oZ_{1j})=0,$$
$$(C_1+G_1)_{jl}=\int L(\hZ_{2l})(\oZ_{0j}+\oZ_{1j})=0,$$
and thus $(A_1+B_2^T)[\hc_0]=(B_1+F_1)[\hc_1]=(C_1+G_1)[\hc_2]=0$. Likewise,
$$(D_1+H_1)_{jl}=\int L(\hZ_{3l})(\oZ_{0j}+\oZ_{1j})=0,$$
$$(E_1+I_1)_{jl}=\int L(\hZ_{4l})(\oZ_{0j}+\oZ_{1j})=0,$$
and therefore $(D_1+H_1)[\hc_3]=(E_1+I_1)[\hc_4]=0$.
%On the other hand,
%$$D_1[\hc_3]\cdot\ocos=\beta_{03}(\hc_3\cdot\hcos)\left(\sum_{j=1}^k\cos\otheta_j\right)=0,$$
%$$E_1[\hc_4]\cdot\ocos=\beta_{03}(\hc_4\cdot\hsin)\left(\sum_{j=1}^k\cos\otheta_j\right)=0,$$
%\textcolor{red}{and, applying \eqref{ortHat},
%$$H_1[\hc_3]\cdot\ocos=\beta_{13}(\hc_3\cdot\hcos)\left(\sum_{j=1}^k\cos^2\otheta_j\right)=0,$$
%$$I_1[\hc_4]\cdot\ocos=\beta_{13}(\hc_3\cdot\hsin)\left(\sum_{j=1}^k\cos^2\otheta_j\right)=0,$$
%that concludes the proof of \eqref{cond2}.}
\eqref{cond3} can be analogously proved. Furthermore,
$$D_2^T[\hc_0]\cdot\ocos=\beta_{31}(\hc_0\cdot\hcos)\left(\sum_{j=1}^k\cos\otheta_j\right)=0,$$
$$M_1[\hc_3]\cdot\ocos=\left[\beta_{33}\left(\sum_{l=1}^h\hc_{3l}\cos^2\htheta_l\right)+\beta_{44}\left(\sum_{l=1}^h\hc_{3l}\sin^2\htheta_l\right)\right]\left(\sum_{j=1}^k \cos\otheta_j\right)=0,$$
$$N_1[\hc_4]\cdot\ocos=(\beta_{33}-\beta_{44})\left(\sum_{l=1}^h\hc_{4l}\sin\htheta_l\cos\htheta_l\right)\left(\sum_{j=1}^k \cos\otheta_j\right)=0,$$
and, due to \eqref{GeneralConditionsHat},
$$H_2^T[\hc_1]\cdot\ocos=\beta_{31}(\hc_1\cdot\hcos)\left(\sum_{j=1}^k\cos^2\otheta_j\right)=0,$$
$$K_2^T[\hc_2]\cdot\ocos=\beta_{31}(\hc_2\cdot\hcos)\left(\sum_{j=1}^k\sin\otheta_j\cos\otheta_j\right)=0,$$
so \eqref{cond4} holds. Identities \eqref{cond5}-\eqref{cond7} can be obtained in a similar way, and thus \eqref{system1} is solvable. An analogous reasoning proves the solvability of \eqref{system2}.
Thus, the systems \eqref{system1} and \eqref{system2} have a solution in $X$ with the form \eqref{formCalpha}, where $[\ow_0,\ldots,\ow_4,\hw_0,\ldots,\hw_4]$ satisfies \eqref{GeneralConditions}.
It can be checked that
\begin{equation}\begin{split}\label{boundsBeta}
|\beta_{\alpha \alpha}|\leq Ck^{-2n+4},\;\alpha=0,2,4,&\qquad |\beta_{\alpha \alpha}|\leq Ck^{-2n+6},\;\alpha=1,3,\\
|\beta_{\alpha_1 \alpha_2}|\leq Ck^{-2n+6},\;\alpha_1,&\;\alpha_2=0,1,3,\;\alpha_1\neq\alpha_2.
\end{split}\end{equation}
where $\beta_{\alpha_1,\alpha_2}$ was defined in \eqref{defBeta}, and henceforth, by \eqref{boundsW} and recalling that $h=O(k)$,
$$\|\ow_\alpha\|\leq Ck^{n-4}\|\overline{r}_\alpha\|,\qquad \|\hw_\alpha\|\leq Ck^{n-4}\|\hat{r}_\alpha\|,\qquad \alpha=0, 1, \ldots, 4.$$
As it was done in the case $\alpha\geq 5$, we will solve the systems by means of a fixed point argument. If we denote
$$\overline{M}_1:=\left[
\begin{array}{ccccc}
\overline{A} &  \overline{B} &  \overline{C} & 0 & 0  \\
\overline{B}^T & \overline{F} & \overline{G} & 0 & 0\\
\overline{C}^T & \overline{G}^T & \overline{J} & 0 & 0 \\
0 & 0 & 0 & \overline{M} & 0 \\
0 & 0 & 0 & 0 & \overline{P} \\
\end{array}\right],\;\overline{\overline{M}}_1:=\left[
\begin{array}{ccccc}
A_1 &  B_1 &  C_1 & D_1 & E_1  \\
B_2^T & F_1 & G_1 & H_1 & I_1\\
C_2^T & G_2^T & J_1 & K_1 & L_1 \\
D_2^T & H_2^T & K_2^T & M_1 & N_1 \\
E_2^T & I_2^T & L_2^T & N_2^T & P_1 \\
\end{array}\right],$$
$$\hat{M}_1:=\left[
\begin{array}{ccccc}
\hat{A} & 0 & 0 & \hat{D} & \hat{E}\\
0 & \hat{F} & 0 & 0 &0 \\
0 & 0 & \hat{J} & 0 & 0 \\
\hat{D}^T & 0 & 0  & \hat{M} & \hat{N}\\
\hat{E}^T & 0 & 0 & \hat{N}^T & \hat{P}
\end{array}\right],\;\hat{\hat{M}}_1:=\left[
\begin{array}{ccccc}
A_2 &  B_2 &  C_2 & D_2 & E_2  \\
B_1^T & F_2 & G_2 & H_2 & I_2\\
C_1^T & G_1^T & J_2 & K_2 & L_2 \\
D_1^T & H_1^T & K_1^T & M_2 & N_2 \\
E_1^T & I_1^T & L_1^T & N_1^T & P_2 \\
\end{array}\right],$$
then $[\ow_0,\ldots,\ow_4,\hw_0,\ldots,\hw_4]$ is a solution of \eqref{system1}-\eqref{system2} if and only if it is a fixed point of
$$F\left[
\begin{array}{c}
\ow_0\\ \ldots\\ \ow_4\\ \hw_0\\\ldots\\ \hw_4
\end{array}\right]:=S^{-1}\left(
\left[
\begin{array}{c}
\overline{r}_0\\ \overline{r}_1\\ \overline{r}_2\\ \overline{r}_3\\ \overline{r}_4
\end{array}\right]
-
\overline{\overline{M}}_1
\left[
\begin{array}{c}
\hat{w}_0\\ \hat{w}_1\\ \hat{w}_2\\ \hat{w}_3\\ \hat{w}_4
\end{array}\right],
\left[
\begin{array}{c}
\hat{r}_0\\ \hat{r}_1\\ \hat{r}_2\\ \hat{r}_3\\ \hat{r}_4
\end{array}\right]
-
\hat{\hat{M}}_1
\left[
\begin{array}{c}
\overline{w}_0\\ \overline{w}_1\\ \overline{w}_2\\ \overline{w}_3\\ \overline{w}_4
\end{array}\right]
\right),$$
where
$$S\left(\left[
\begin{array}{c}
\overline{w}_0\\ \overline{w}_1\\ \overline{w}_2\\ \overline{w}_3\\ \overline{w}_4
\end{array}\right], \left[
\begin{array}{c}
\hat{w}_0\\ \hat{w}_1\\ \hat{w}_2\\ \hat{w}_3\\ \hat{w}_4
\end{array}\right] \right):=\left(\overline{M}_1\left[
\begin{array}{c}
\overline{w}_0\\ \overline{w}_1\\ \overline{w}_2\\ \overline{w}_3\\ \overline{w}_4
\end{array}\right], \hat{M}_1\left[
\begin{array}{c}
\hat{w}_0\\ \hat{w}_1\\ \hat{w}_2\\ \hat{w}_3\\ \hat{w}_4
\end{array}\right]\right).$$
Notice that $S$ is a linear map which is invertible for vectors satisfying the orthogonality conditions \eqref{GeneralConditions}. Let
\begin{equation*}\begin{split}
B_r:=\{\left[\begin{array}{c}
\overline{w}_0\\ \ldots \\ \overline{w}_4
\end{array}\right], &
\left[
\begin{array}{c}
\hat{w}_0\\ \ldots\\ \hat{w}_4
\end{array}\right]\in K : \|\left[\begin{array}{c}
\overline{w}_0\\ \ldots \\ \overline{w}_4
\end{array}\right]\|\leq rk^{n-4}\|\left[\begin{array}{c}
\overline{r}_0\\ \ldots \\ \overline{r}_4
\end{array}\right]\|,\\
&\|\left[
\begin{array}{c}
\hat{w}_0\\ \ldots\\ \hat{w}_4
\end{array}\right]\|\leq rk^{n-4}\|\left[
\begin{array}{c}
\hat{r}_0\\ \ldots\\ \hat{r}_4\end{array}\right]\},
\end{split}\end{equation*}
for some fixed $r$ large, where
$$K:=\{\left[\begin{array}{c}
\overline{w}_0\\ \ldots \\ \overline{w}_4
\end{array}\right]\in\R^{5\times k},
\left[
\begin{array}{c}
\hat{w}_0\\ \ldots\\ \hat{w}_4
\end{array}\right]\in \R^{5\times h}\mbox{ satisfying }\eqref{GeneralConditions}.\}.$$
Thanks to the particular form of the matrices $\overline{\overline{M}}_1$, $\hat{\hat{M}}$ (all their submatrices are combinations of sinus and cosinus multiplied by a term $\beta_{\alpha_1,\alpha_2}$) and \eqref{boundsBeta} it can be checked that $F$ is a contraction mapping that sends $B_r$ into $B_r$. This finishes the proof of the existence of a solution to \eqref{system1}-\eqref{system2} satisfying \eqref{GeneralConditions}.

Define the vectors
\begin{equation}\label{vectors21}
\overline{u}_0:=
\left[
\begin{array}{c}
0\\ \ozero\\ \hzero\\
0\\ \ozero\\ \hzero\\
0\\ \oone\\ \hzero\\
0\\ \ozero\\ \hzero\\
0\\ \ozero\\ \hzero
\end{array}\right],\,
\overline{u}_1:=
\left[
\begin{array}{c}
0\\ \ocos\\ \hzero\\
0\\ -\ocos\\ \hzero\\
0\\ \ozero\\ \hzero\\
0\\ \ozero\\ \hzero\\
0\\ \ozero\\ \hzero
\end{array}\right],\,
\overline{u}_2:=
\left[
\begin{array}{c}
0\\ \osin\\ \hzero\\
0\\ -\osin\\ \hzero\\
0\\ \ozero\\ \hzero\\
0\\ \ozero\\ \hzero\\
0\\ \ozero\\ \hzero
\end{array}\right],\,
%\overline{u}_3:=
%\left[
%\begin{array}{c}
%0\\ \ozero\\ \hzero\\
%0\\ \ozero\\ \hzero\\
%0\\ \ozero\\ \hzero\\
%0\\ \ocos\\ \hzero\\
%0\\ \ozero\\ \hzero
%\end{array}\right],
\end{equation}
%\begin{equation}\label{vectors22}
%\overline{u}_4:=
%\left[
%\begin{array}{c}
%0\\ \ozero\\ \hzero\\
%0\\ \ozero\\ \hzero\\
%0\\ \ozero\\ \hzero\\
%0\\ \osin\\ \hzero\\
%0\\ \ozero\\ \hzero
%\end{array}\right],\,
%\overline{u}_5:=
%\left[
%\begin{array}{c}
%0\\ \ozero\\ \hzero\\
%0\\ \ozero\\ \hzero\\
%0\\ \ozero\\ \hzero\\
%0\\ \ozero\\ \hzero\\
%0\\ \ocos\\ \hzero
%\end{array}\right],\,
%\overline{u}_6:=
%\left[
%\begin{array}{c}
%0\\ \ozero\\ \hzero\\
%0\\ \ozero\\ \hzero\\
%0\\ \ozero\\ \hzero\\
%0\\ \ozero\\ \hzero\\
%0\\ \osin\\ \hzero
%\end{array}\right],\,
%\overline{v}_0:=
%\left[
%\begin{array}{c}
%0\\ \ozero\\ \hzero\\
%0\\ \ozero\\ \hzero\\
%0\\ \ozero\\ \hzero\\
%0\\ \ozero\\ \hzero\\
%0\\ \ozero\\ \hone
%\end{array}\right],
%\end{equation}
\begin{equation}\label{vectors23}
\hat{u}_0:=
\left[
\begin{array}{c}
0\\ \ozero\\ \hzero\\
0\\ \ozero\\ \hzero\\
0\\ \ozero\\ \hzero\\
0\\ \ozero\\ \hzero\\
0\\ \ozero\\ \hone
\end{array}\right],\,
\hat{u}_1:=
\left[
\begin{array}{c}
0\\ \ozero\\ \hcos\\
0\\ \ozero\\ \hzero\\
0\\ \ozero\\ \hzero\\
0\\ \ozero\\ -\hcos\\
0\\ \ozero\\ \hzero
\end{array}\right],\,
\hat{u}_2:=
\left[
\begin{array}{c}
0\\ \ozero\\ \hsin\\
0\\ \ozero\\ \hzero\\
0\\ \ozero\\ \hzero\\
0\\ \ozero\\ -\hsin\\
0\\ \ozero\\ \hzero
\end{array}\right].\,
%\overline{v}_3:=
%\left[
%\begin{array}{c}
%0\\ \ozero\\ \hzero\\
%0\\ \ozero\\ \hcos\\
%0\\ \ozero\\ \hzero\\
%0\\ \ozero\\ \hzero\\
%0\\ \ozero\\ \hzero
%\end{array}\right],\,
%\overline{v}_4:=
%\left[
%\begin{array}{c}
%0\\ \ozero\\ \hzero\\
%0\\ \ozero\\ \hsin\\
%0\\ \ozero\\ \hzero\\
%0\\ \ozero\\ \hzero\\
%0\\ \ozero\\ \hzero
%\end{array}\right].
\end{equation}
%\begin{equation}\label{vectors24}
%\overline{v}_5:=
%\left[
%\begin{array}{c}
%0\\ \ozero\\ \hzero\\
%0\\ \ozero\\ \hzero\\
%0\\ \ozero\\ \hcos\\
%0\\ \ozero\\ \hzero\\
%0\\ \ozero\\ \hzero
%\end{array}\right],\,
%\overline{v}_6:=
%\left[
%\begin{array}{c}
%0\\ \ozero\\ \hzero\\
%0\\ \ozero\\ \hzero\\
%0\\ \ozero\\ \hsin\\
%0\\ \ozero\\ \hzero\\
%0\\ \ozero\\ \hzero
%\end{array}\right],
%\end{equation}
We can summarize this section in the following lemma.
\begin{lemma}\label{dos}
System \eqref{difficile} is solvable, and the solution has the form
\begin{equation*}\begin{split}
\left[
\begin{array}{c}
\tilde{c}_0\\\ldots\\\tilde{c}_4
\end{array}
\right]=&
\,w+t_0w_0+t_1w_1+t_2w_2+t_3w_3+t_4w_4\\
&+
\overline{t}_0 \overline{u}_0+\overline{t}_1 \overline{u}_1+\overline{t}_2 \overline{u}_2+
\hat{t}_0 \hat{u}_0+\hat{t}_1 \hat{u}_1+\hat{t}_2 \hat{u}_2,
\end{split}\end{equation*}
where
$$w:=\left[
\begin{array}{ccccccccccccccc}
0 &\ow_0 &\hw_0 & 0 & \ow_1&\hw_1& 0 & \ow_2&\hw_2 & 0 & \ow_3&\hw_3 & 0 & \ow_4&\hw_4
\end{array}
\right]^T$$
($\ow_0,\ldots, \ow_4,\hw_0,\ldots,\hw_4$ being the unique solution to the system \eqref{system1}-\eqref{system2} satisfying \eqref{GeneralConditions}and \eqref{boundsW}), $t_0,\ldots,t_4$, $\overline{t}_0,\overline{t}_1,\overline{t}_2$, $\hat{t}_0,\hat{t}_1,\hat{t}_2$ $\in \R$ and $\tilde{c}_0,\ldots,\tilde{c}_4$, $w_0,\ldots,w_4$, $\overline{u}_0,\overline{u}_1,\overline{u}_2$, $\hat{u}_0,\hat{u}_1,\hat{u}_2$ are defined in \eqref{vectorsC}, \eqref{vectors11}, \eqref{vectors12}, \eqref{vectors21} and \eqref{vectors23}.
\end{lemma}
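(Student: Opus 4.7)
The plan is to make rigorous the reduction scheme already sketched after \eqref{difficile}. First, I would verify that $w_0,\ldots,w_4$ defined in \eqref{vectors11}-\eqref{vectors12} span a five-dimensional subspace of $\ker(M_1)$ and that the right-hand side $[r_0,\ldots,r_4]^T$ satisfies the five orthogonality relations \eqref{ortw}. Both facts are consequences of the single identity $L({\bf z}_\alpha) = 0$ for $\alpha = 0, 1, 2, 3, 4$: written out via the decompositions of the ${\bf z}_\alpha$ in terms of $Z_{\alpha 0}, \overline Z_{\alpha j}, \hat Z_{\alpha l}$ (collected in Appendix \ref{appe1}), this identity produces the linear combinations of rows of $M_1$ and of components of $r_\alpha$ displayed right after \eqref{difficile} and \eqref{ortw}. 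Consequently, finding the full family of solutions to \eqref{difficile} reduces to finding a $\tilde c_\alpha = [\overline c_\alpha;\hat c_\alpha]$ that solves the smaller system \eqref{systemQ}, while the remaining kernel directions supply the $t_0 w_0 + \cdots + t_4 w_4$ piece of the final formula.

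Second, I would exploit the symmetries in the $(y_1,y_2)$ and $(y_3,y_4)$ planes, together with the parities in the other variables, to read off the block structure of $Q$. The case-by-case computation (i)-(xv) of its entries shows that $Q$ splits into the two subsystems \eqref{system1} and \eqref{system2}, coupled only through the off-diagonal matrices $\overline{\overline M}_1$ and $\hat{\hat M}_1$ whose entries are trigonometric factors times the coefficients $\beta_{\alpha_1\alpha_2}$ of \eqref{defBeta}. For the decoupled diagonal blocks $\overline M_1$ and $\hat M_1$, \cite[Proposition 5.1]{MW} provides solvability under the orthogonality conditions \eqref{GeneralConditions}, the characterization \eqref{formCalpha} of the solution set, and the a priori bounds \eqref{boundsW}; the extra kernel directions of these decoupled problems contribute exactly the six vectors $\overline u_0,\overline u_1,\overline u_2,\hat u_0,\hat u_1,\hat u_2$ of \eqref{vectors21}-\eqref{vectors23}, which account for the $\overline t_i$ and $\hat t_i$ parameters in the statement.

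The main obstacle will be to restore solvability once the coupling is switched back on. I would first verify that the right-hand sides $\overline r_\alpha,\hat r_\alpha$ themselves satisfy all conditions of \eqref{GeneralConditions}, extending \eqref{ortOR2}-\eqref{ortOR4} to every required pairing; the key ingredients are Lemma \ref{AuxLemma} and the fact that each of $L(\sum_j \overline Z_{\alpha j})$ and $L(\sum_l \hat Z_{\alpha l})$ is invariant under the relevant discrete rotation, which annihilates the cross-terms, while the residual trigonometric sums $\sum_j \cos\overline\theta_j,\sum_j \sin\overline\theta_j, \sum_l \cos\hat\theta_l, \sum_l \sin\hat\theta_l$ vanish outright. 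Next, I would check that the coupling term $\overline{\overline M}_1[\hat c]$ (resp.\ $\hat{\hat M}_1[\overline c]$) preserves these orthogonalities, which amounts to the identities \eqref{cond1}-\eqref{cond7}; their verification relies on the same trigonometric sums, on the parity-induced vanishing of $\beta_{02},\beta_{04},\beta_{14},\beta_{23},\beta_{24}$, and on the a priori orthogonalities \eqref{GeneralConditionsHat} of $\hat c_1,\hat c_2$ (and symmetrically of $\overline c_3,\overline c_4$).

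Once the orthogonality conditions are secured, I would close the argument by a contraction mapping on the subspace $K\subset\mathbb{R}^{5(k+h)}$ of vectors satisfying \eqref{GeneralConditions}. Setting
$$F\big([\overline w_\alpha],[\hat w_\alpha]\big) := S^{-1}\Big([\overline r_\alpha]-\overline{\overline M}_1[\hat w_\alpha],\;[\hat r_\alpha]-\hat{\hat M}_1[\overline w_\alpha]\Big),$$
with $S$ the block-diagonal map $(\overline M_1,\hat M_1)$ restricted to $K$ (invertible thanks to \cite[Proposition 5.1]{MW}), a fixed point of $F$ will solve the coupled system. To see that $F$ maps the ball $B_r$ into itself and is a contraction for $k,h$ large, I would combine the operator bound $\|S^{-1}\|\le Ck^{n-4}$ coming from \eqref{boundsW} with the entry-wise estimates \eqref{boundsBeta} on the $\beta_{\alpha_1\alpha_2}$, taking advantage of the hypothesis $h=O(k)$; this produces an effective contraction constant of order $k^{-n+2}$, so the fixed point exists, is unique in $B_r$, and satisfies the required $k^{n-4}$-type estimates. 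Assembling this solution with the eleven homogeneous directions $w_0,\ldots,w_4,\overline u_0,\overline u_1,\overline u_2,\hat u_0,\hat u_1,\hat u_2$ then yields the parametrization claimed in the statement.
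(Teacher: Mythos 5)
Your proposal follows essentially the same route as the paper: identify the five kernel vectors $w_0,\ldots,w_4$ and the compatibility conditions \eqref{ortw} from $L({\bf z}_\alpha)=0$ via the Appendix decompositions, reduce to \eqref{systemQ}, split into the two subsystems handled by \cite[Proposition 5.1]{MW} under \eqref{GeneralConditions} (whence the six extra directions $\overline u_i,\hat u_i$), verify the orthogonalities of the right-hand sides and their preservation by the coupling blocks, and close with the contraction argument for $F=S^{-1}(\cdot)$ using \eqref{boundsW} and \eqref{boundsBeta}. The argument is correct and matches the paper's proof in structure and level of detail.
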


%\section{Proof of Proposition \ref{nonso}}\label{nnn}

\section{Proof of Proposition \ref{systC}}\label{prop31}

\medskip
With this in mind, we observe that
 Proposition \ref{systC} is a consequence of the following estimates: if $k, \, h \to \infty$
\begin{equation}\begin{split}\label{int1}
\int |u|^{p-1}Z_{\alpha 0} Z_{\beta 0} & = \int U^{p-1}Z_{00}^2 +O(\mu^{\frac{n-2}{2}}+\lambda^{\frac{n-2}{2}})\qquad \mbox{if } \alpha=\beta=0,\\
 & = \int U^{p-1}Z_{10}^2 +O(\mu^{\frac{n-2}{2}}+\lambda^{\frac{n-2}{2}})\qquad \mbox{if } \alpha=\beta\neq 0,\\
& = O(\mu^{\frac{n-2}{2}}+\lambda^{\frac{n-2}{2}})\qquad\mbox{otherwise},
\end{split}\end{equation}

\begin{equation}\begin{split}\label{int2}
\int |u|^{p-1}\oZ_{\alpha i} \oZ_{\beta j} & = \int U^{p-1}Z_{00}^2 +O(\mu^{\frac{n-2}{2}})\qquad \mbox{if } \alpha=\beta=0,\, i=j,\\
& = \int U^{p-1}Z_{10}^2 +O(\mu^{\frac{n-2}{2}})\qquad \mbox{if } \alpha=\beta\neq 0,\, i=j\\
& = O(\mu^{\frac{n-2}{2}}) \qquad\mbox{otherwise},
\end{split}\end{equation}

\begin{equation}\begin{split}\label{int3}
\int |u|^{p-1}\hZ_{\alpha l} \hZ_{\beta m} & = \int U^{p-1}Z_{00}^2 +O(\lambda^{\frac{n-2}{2}})\qquad \mbox{if } \alpha=\beta=0,\,l=m,\\
& = \int U^{p-1}Z_{10}^2 +O(\lambda^{\frac{n-2}{2}})\qquad \mbox{if } \alpha=\beta\neq 0,\,l=m,\\
& = O(\lambda^{\frac{n-2}{2}}) \qquad\mbox{otherwise},
\end{split}\end{equation}

\begin{equation}\begin{split}\label{int4}
\int |u|^{p-1}\oZ_{\alpha i} Z_{\beta 0} & = O(\mu^{\frac{n-2}{2}}), \quad \int |u|^{p-1}\hZ_{\alpha l} Z_{\beta 0}  = O(\lambda^{\frac{n-2}{2}}),
\end{split}\end{equation}
\begin{equation}\begin{split}\label{int6}
\int |u|^{p-1}\oZ_{\alpha i} \hZ_{\beta l} & = O(\mu^{\frac{n-2}{2}}\lambda^{\frac{n-2}{2}}).
\end{split}\end{equation}
In the formulas above,
$i,j=1,\ldots,k,$ $l,m=1,\ldots, h$, $\alpha,\beta =0,\ldots, n$.

The proof of \eqref{int2} and \eqref{int3} follows like (8.3) in \cite{MW}, and \eqref{int1}, \eqref{int4}  are obtained analogously. Let us prove \eqref{int6}.

The key point here is to notice that if $y\in B(\xi_i,\frac{\oalpha}{k})$ then $|y-\eta_l|\geq C$, with $C$ independent of $i$, $l$ and $k$, and $|y-\xi_i|\geq C$ whenever $y\in B(\eta_l,\frac{\halpha}{h})$, where $C$ is independent of $i$, $l$ and $h$. Consider the case $\alpha=\beta=0$. We split the integral into four parts.
\begin{equation*}\begin{split}
\int |u|^{p-1}\oZ_{0 i} \hZ_{0 l} =& \int_{B(\xi_i,\frac{\oalpha}{k})} |u|^{p-1}\oZ_{0 i} \hZ_{0 j}+\int_{B(\eta_l,\frac{\halpha}{h})} |u|^{p-1}\oZ_{0 i} \hZ_{0 j}\\
& +\int_{\R^n\setminus B(0,2)} |u|^{p-1}\oZ_{0 i} \hZ_{0 j}\\
&+\int_{B(0,2)\setminus (B(\xi_i,\frac{\oalpha}{k})\cup B(\eta_l,\frac{\halpha}{h}))} |u|^{p-1}\oZ_{0 i} \hZ_{0 j}\\
=:&\, i_1+i_2+i_3+i_4,
\end{split}\end{equation*}
Firstly, using the definition of $\oZ_{0i}$ and $\hZ_{0l}$,
\begin{equation*}\begin{split}
i_1\leq & \,C\int_{B(\xi_i,\frac{\oalpha}{k})} |u|^{p-1}\oZ_{0 i}\frac{\lambda^{\frac{n-2}{2}}}{|y-\eta_l|^{n-2}}\leq C\mu^{\frac{n-2}{2}}\lambda^{\frac{n-2}{2}}\int_{B(0,\frac{\oalpha}{\mu k})}U^{p-1}Z_{00}\\
\leq&\, C\mu^{\frac{n-2}{2}}\lambda^{\frac{n-2}{2}},
\end{split}\end{equation*}
where the second inequality follows by the change of variable $x=\xi_i+\mu y$. $i_2$ follows in the same way only by translating to $x=\eta_l+\lambda y$. On the other hand, we have that
$$i_3\leq C\mu^{\frac{n-2}{2}}\lambda^{\frac{n-2}{2}}\int_{\R^n\setminus B(0,2)}\frac{1}{|y|^4|y|^{n-2}|y|^{n-2}}\,dy\leq C\mu^{\frac{n-2}{2}}\lambda^{\frac{n-2}{2}}.$$
To estimate $i_4$ we take into account that, since $\xi_i$ and $\eta_l$ are separated, $|y-\xi_i|^{-(n-2)}$ and $|y-\eta_l|^{-(n-2)}$ cannot be singular at the same time, so the behavior of the integral comes determined by the singularity of only one of them. That is,
\begin{equation*}\begin{split}
i_4\leq &\, C\mu^{\frac{n-2}{2}}\lambda^{\frac{n-2}{2}}\int_{B(0,2)\setminus (B(\xi_i,\frac{\oalpha}{k})\cup B(\eta_l,\frac{\halpha}{h}))} |u|^{p-1}\frac{1}{|y-\xi_i|^{n-2}}\frac{1}{|y-\eta_l|^{n-2}}\,dy\\
\leq &\,C\mu^{\frac{n-2}{2}}\lambda^{\frac{n-2}{2}}\int_{B(0,2)\setminus (B(\xi_i,\frac{\oalpha}{k})\cup B(\eta_l,\frac{\halpha}{h}))}\left(\frac{1}{|y-\xi_i|^{n-2}}+\frac{1}{|y-\eta_l|^{n-2}}\right)\,dy\\
\leq & \, C \mu^{\frac{n-2}{2}}\lambda^{\frac{n-2}{2}}.
\end{split}\end{equation*}
The case $\alpha,\beta\neq 0$ follows analogously just by noticing that
$$\oZ_{\alpha i}\sim \frac{\mu^{\frac{n-2}{2}}}{|y-\xi_i|^{n-1}},\qquad \hZ_{\beta l}\sim \frac{\lambda^{\frac{n-2}{2}}}{|y-\eta_l|^{n-1}},$$
and hence \eqref{int6} is proved.

We now need the following result.

\begin{prop}\label{behaviorPi}
The functions $\pi_\alpha$ can be decomposed as
$$\pi_\alpha(y)=\sum_{j=1}^k\opi_{\alpha j}(y)+\sum_{l=1}^h\hpi_{\alpha l}+\tilde{\pi}_\alpha(y),$$
where
$$\opi_{\alpha j}(y)=\opi_{\alpha 1}(e^{-\frac{2\pi(j-1)}{k}}\overline{y},\hat{y},y'),\qquad \hpi_{\alpha l}(y)=\hpi_{\alpha 1}(\overline{y}, e^{-\frac{2\pi(l-1)}{h}}\hat{y},y').$$
Furthermore, there exists a positive constant $C$ such that
$$\|\tilde{\pi}_\alpha\|_*\leq C(k^{1-\frac{n}{q}}+h^{1-\frac{n}{q}}),\qquad \alpha=0,1,\ldots,n,$$
$$\|\overline{\opi}_{\alpha 1}\|_*\leq C k^{-\frac{n}{q}}, \;\alpha=0,\ldots,n,\;\;\mbox{ where }\;\;\overline{\opi}_{\alpha 1}:=\mu^{\frac{n-2}{2}}\opi_{\alpha 1}(\xi_1+\mu y),$$
$$\|\hat{\hpi}_{\alpha 1}\|_*\leq C h^{-\frac{n}{q}}, \;\alpha=0,\ldots,n,\;\;\mbox{ where }\;\;\hat{\hpi}_{\alpha 1}:=\lambda^{\frac{n-2}{2}}\hpi_{\alpha 1}(\eta_1+\lambda y).$$
\end{prop}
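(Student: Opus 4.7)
\textbf{Proof sketch for Proposition \ref{behaviorPi}.}

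The plan is to read off the decomposition of $\pi_\alpha$ directly from the decomposition of the correction term
$$\phi=\sum_{j=1}^k\overline{\phi}_j+\sum_{l=1}^h\hat{\phi}_l+\psi$$
provided in \eqref{defPhi}, and then transfer the weighted pointwise bounds already established in Section \ref{sec3} through the derivative/transformation that relates each $\pi_\alpha$ to $\phi$. Concretely, for $\alpha=0,1,\ldots,n$ the function $\pi_\alpha$ is the $\phi$-contribution to the corresponding component of $\Pi_\alpha$ (see \eqref{Z0big} and \eqref{Pibig}): it is a first-order linear differential operator $D_\alpha$ (dilation, translation, planar rotation, Kelvin) applied to $\phi$, so
$$\pi_\alpha= D_\alpha\phi =\sum_{j=1}^kD_\alpha\overline{\phi}_j+\sum_{l=1}^hD_\alpha\hat{\phi}_l+D_\alpha\psi.$$
Setting $\overline{\pi}_{\alpha j}:=D_\alpha\overline{\phi}_j$, $\hat{\pi}_{\alpha l}:=D_\alpha\hat{\phi}_l$, $\tilde{\pi}_\alpha:=D_\alpha\psi$ gives the desired decomposition; the rotational identities in the proposition follow from \eqref{cond1} and \eqref{cond3} together with the fact that $D_\alpha$ commutes with the symmetries respected by $\overline{\phi}_1$ and $\hat{\phi}_1$.

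The bound on $\tilde{\pi}_\alpha=D_\alpha\psi$ follows from \eqref{opPsi}, namely $\|\psi\|_*\leq C(k^{1-\frac{n}{q}}+h^{1-\frac{n}{q}})$, combined with elliptic regularity applied to \eqref{system3}: $\psi$ satisfies a linear equation whose coefficients and right-hand side are controlled in the $\|\cdot\|_{**}$ norm uniformly, and the Kelvin invariance \eqref{condpsi4} allows one to absorb the $|y|$ weight appearing in the dilation/Kelvin-type operators into an equivalent pointwise estimate on $(1+|y|^{n-2})D_\alpha\psi$. A standard Calder\'on--Zygmund/Schauder bootstrap inside fixed balls in the rescaled variable (using that the coefficients of the linearized operator are bounded in appropriate $L^q$ norms) upgrades the $\|\psi\|_*$ control to an $\|D_\alpha\psi\|_*$ control of the same order, giving the claimed bound on $\tilde{\pi}_\alpha$.

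For the localized pieces one rescales: the estimate $\|\overline{\overline{\phi}}_1\|_*\leq C k^{-\frac{n}{q}}$ from Proposition \ref{existProjProb} (together with the analogous bound for $\hat{\hat{\phi}}_1$) translates, via the same interior regularity argument applied to \eqref{eq1} in the variable $y\mapsto\xi_1+\mu y$, into a bound of order $k^{-\frac{n}{q}}$ on $\overline{\overline{D_\alpha\phi}}_1$ in $\|\cdot\|_*$, and likewise $h^{-\frac{n}{q}}$ for $\hat{\hat{\pi}}_{\alpha 1}$; invariance under rotation in the $(y_1,y_2)$-plane for the first family and the $(y_3,y_4)$-plane for the second yields the claimed formulas $\overline{\pi}_{\alpha j}(y)=\overline{\pi}_{\alpha 1}(e^{-\frac{2\pi(j-1)}{k}\mathrm i}\overline{y},\hat{y},y')$ and $\hat{\pi}_{\alpha l}(y)=\hat{\pi}_{\alpha 1}(\overline{y},e^{-\frac{2\pi(l-1)}{h}\mathrm i}\hat{y},y')$.

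The main obstacle is the passage from $\|\cdot\|_*$ bounds on $\phi$, $\overline{\phi}_j$, $\hat{\phi}_l$, $\psi$ to $\|\cdot\|_*$ bounds on their first derivatives, i.e.\ turning pointwise control of the function into pointwise control of $D_\alpha$ applied to it at the same weight. The delicate point is that the Kelvin-type operator appearing in $Z_{00}$ multiplies derivatives by $|y|^2$, so the weight in $\|\cdot\|_*$ is exactly critical: one must exploit the Kelvin invariance of each $\overline{\phi}_j$, $\hat{\phi}_l$ and $\psi$ (built into \eqref{cond2}, \eqref{cond4}, \eqref{condpsi4}) to reduce the estimate in the exterior $|y|\geq 1$ to the interior $|y|\leq 1$, where standard interior $W^{2,q}$ regularity in small balls, together with the $L^q$ bound on the right-hand side of the equation obtained in Section \ref{error}, closes the argument.
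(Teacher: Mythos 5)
The paper gives no proof to compare against: it states ``We omit the proof of this result,'' and in fact never defines $\pi_\alpha$ explicitly. Your strategy --- identify $\pi_\alpha$ with the $\phi$-part of the corresponding symmetry generator, decompose it along $\phi=\sum_j\overline\phi_j+\sum_l\hat\phi_l+\psi$, and transfer the bounds of Proposition \ref{existProjProb} and \eqref{opPsi} to first derivatives via scaled interior elliptic estimates plus Kelvin invariance --- is the natural one and is surely what the authors intend. Your treatment of $\tilde\pi_\alpha=D_\alpha\psi$ is sound: a scaled gradient estimate on annuli $|y|\sim R$ gives $|\nabla\psi|\lesssim R^{-1}\sup|\psi|+R^{1-n/q}\|f\|_{L^q}$, which with the weights in $\|\cdot\|_{*}$ and $\|\cdot\|_{**}$ yields $(1+|y|)^{n-1}|\nabla\psi|\lesssim\|\psi\|_*+|y|^{n/q-2}\|f\|_{**}$, enough for all the generators, and the Kelvin invariance \eqref{condpsi4} reduces $|y|\geq 1$ to $|y|\leq 1$ as you say.

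Two points are glossed over and need care. First, the raw generators do not commute with the discrete rotations: if $\overline\phi_j(y)=\overline\phi_1(e^{-\frac{2\pi(j-1)}{k}i}\overline y,\hat y,y')$, then $\partial_{y_1}\overline\phi_j$ is a $\otheta_j$-dependent linear combination of $(\partial_1\overline\phi_1)$ and $(\partial_2\overline\phi_1)$ evaluated at the rotated point, not $(\partial_1\overline\phi_1)$ at the rotated point. The claimed identity $\opi_{\alpha j}(y)=\opi_{\alpha1}(e^{-\frac{2\pi(j-1)}{k}i}\overline y,\hat y,y')$ therefore forces you to define the localized pieces in the adapted frame ($\xi_j\cdot\nabla$, $\xi_j^{\perp}\cdot\nabla$, dilation centered at $\xi_j$), exactly as in the definition of $\oZ_{\alpha j}$ and as reflected in the appendix, where $z_1$ is rewritten through $\cos\otheta_j\,\oZ_{1j}-\sin\otheta_j\,\oZ_{2j}$. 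Second, and more seriously, the assertion that the localized bound is ``of the same order'' $k^{-\frac nq}$ as $\|\oophi_1\|_*$ fails for a raw first derivative: under $y\mapsto\xi_1+\mu y$ each derivative costs a factor $\mu^{-1}\sim k^{2}$, so $\mu^{\frac{n-2}{2}}(\partial_{y_\alpha}\overline\phi_1)(\xi_1+\mu y)=\mu^{-1}\partial_{\alpha}\oophi_1(y)$ and elliptic regularity only yields $O(\mu^{-1}k^{-\frac nq})$. The stated estimate $\|\overline{\opi}_{\alpha1}\|_*\leq Ck^{-\frac nq}$ holds only for generators whose rescalings remain $O(1)$ (the dilation centered at $\xi_1$, or derivatives carrying a compensating factor $\mu$); your sketch does not track this normalization. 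You should first pin down the definition of $\opi_{\alpha j}$ and $\hpi_{\alpha l}$ and then verify, generator by generator, that the weights are compatible with the claimed $k^{-\frac nq}$ and $h^{-\frac nq}$ rates.
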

\noindent We omit the proof of this result.

\medskip
Thanks to Proposition \ref{behaviorPi}, we get
\begin{equation}\label{intpi1}
\bigg|\int |u|^{p-1}Z_{\alpha 0}\pi_\beta\bigg|\leq C\|\tilde{\pi}_\beta\|_*,
\end{equation}
\begin{equation}\label{intpi2}
\bigg|\int |u|^{p-1}\oZ_{\alpha i}\pi_\beta\bigg|\leq C\|\opi_{\beta 1}\|_*, \qquad \bigg|\int |u|^{p-1}\hZ_{\alpha l}\pi_\beta\bigg|\leq C\|\hpi_{\beta 1}\|_*.
\end{equation}
Notice next (see (8.5) in \cite{MW} for a proof) that
$$\int U^{p-1}Z_{00}^2=\int U^{p-1}Z_{10}^2>0,$$
and thus we can define
$$t_\beta:=-\frac{1}{\int U^{p-1}Z_{00}^2}\int \varphi^\perp |u|^{p-1}{\bf z}_\beta,$$
which satisfies $|t_\beta|\leq C\|\varphi^\perp\|_*$, with $C$ independent of $k$ and $h$.

Consider \eqref{systemFact1} for $\beta=0$. Thus, by using the definition of $z_0$, \eqref{int1}-\eqref{int6}, \eqref{intpi1}, \eqref{intpi2} and Proposition \ref{behaviorPi} we get
\begin{equation*}\begin{split}
\sum_{\alpha=0}^n&\left[c_{\alpha 0}\int_{\mathbb{R}^n}Z_{\alpha 0}|u|^{p-1}z_0+\sum_{j=1}^k\oc_{\alpha j}\int_{\mathbb{R}^n}\oZ_{\alpha j}|u|^{p-1}z_0+\sum_{l=1}^h\hc_{\alpha l}\int_{\mathbb{R}^n}\hZ_{\alpha l}|u|^{p-1}z_0\right]\\
&=\,c_{00}\int U^{p-1}Z_{00}^2-\sum_{j=1}^k\oc_{0j}\int U^{p-1}Z_{00}^2-\sum_{l=1}^h\hc_{0l}\int U^{p-1}Z_{00}^2\\
&\;\;-\sum_{j=1}^k\oc_{1j}\int U^{p-1}Z_{00}^2-\sum_{l=1}^h\hc_{3l}\int U^{p-1}Z_{00}^2+O(k^{1-\frac{n}{q}}+h^{1-\frac{n}{q}})\mathcal{L}\left[\begin{array}{c} c_{00}\\\ldots\\c_{n0}\end{array}\right]\\
&\;\;+O(k^{-\frac{n}{q}})\overline{\mathcal{L}}\left[\begin{array}{c} \oc_0\\\ldots\\\oc_{n}\end{array}\right]+O(h^{-\frac{n}{q}})\hat{\mathcal{L}}\left[\begin{array}{c} \hc_0\\\ldots\\\hc_{n}\end{array}\right],
\end{split}\end{equation*}
where $\mathcal{L}$, $\overline{\mathcal{L}}$ and $\hat{\mathcal{L}}$ are linear functions with coefficients uniformly bounded in $k$ and $h$. Identity \eqref{t0} follows straightforward from here. \eqref{t1}-\eqref{4nmasalphamenos6} are obtained in the same way.

\section{Proof of \eqref{f2}}\label{fact2}

We proceed as in \cite[Section 9]{MW}. Indeed, we decompose $\varphi_0^\perp$ as
$$\varphi_0^\perp=\sum_{\alpha=0}^n c_{\alpha 0} \varphi_{\alpha 0}^\perp,\qquad\mbox{with}\qquad L(\varphi_{\alpha 0}^\perp)=-L(Z_{\alpha 0}),$$
which is equivalent to
\begin{equation}\label{VarphiZero}
\Delta(\varphi_{\alpha 0}^\perp)+p\gamma U^{p-1}(\varphi_{\alpha 0}^\perp)+a_0(y)\varphi_{\alpha 0}^\perp=-L(Z_{\alpha 0}),
\end{equation}
where $a_0(y):=p\gamma(|u|^{p-1}-U^{p-1})$. Adapting the arguments of \cite{MW} it can be seen that $a\in L^{\frac{n}{2}}(\R^n)$,
\begin{equation}\label{kz00}
|y|^{-n-2}L(Z_{00})(|y|^{-2}y)=-L(Z_{00})(y),
\end{equation}
\begin{equation}\label{kzAlpha0}
 |y|^{-n-2}L(Z_{\alpha 0})(|y|^{-2}y)=L(Z_{\alpha 0})(y),\;\alpha=1,\ldots,n,
 \end{equation}
and
\begin{equation}\label{acero}\|L(Z_{\alpha 0})\|_{L^{\frac{2n}{n+2}}(\R^n)}\leq C(\mu^{\frac{n-1}{n}}+\lambda^{\frac{n-1}{n}}).
\end{equation}
We will solve \eqref{VarphiZero} as a fixed point problem. Let us consider the problem
$$L_0(\varphi)=h-a_0(y)\phi,$$
where $L_0(\varphi):= \Delta \varphi+p\gamma U^{p-1}\varphi$ and $h\in L^{\frac{2n}{n+2}}(\R^n)$ satisfies
$$h(y)=|y|^{-n-2}h(|y|^{-2}y).$$
Let $T$ be the operator that associates to every $\phi$ the solution $\varphi$ to this problem, that is,
$$\varphi=T(h-a_0(y)\phi).$$
Naming $A(\phi):=T(h-a_0(y)\phi)$ we are going to see that this operator is a contraction and that maps the ball
$$B:=\{\phi\in\mathcal{D}^{1,2}(\R^n): \|\phi\|_*\leq C(\mu^{\frac{n-1}{n}}+\lambda^{\frac{n-1}{n}}), \phi(y)=|y|^{-n+2}\phi(|y|^{-2}y)\},$$
into herself. Indeed, assume $\phi\in B$. Thus,
$$a_0(y)\phi(y)=|y|^{-n-2}a_0(|y|^{-2}y)\phi(|y|^{-2}y),$$
and, by \cite[Proposition 9.1]{MW}, we know that
$$\|\varphi\|_*\leq C\|h-a_0(y)\phi\|_{L^{\frac{2n}{n+2}}(\R^n)}\leq C\left(\|h\|_{L^{\frac{2n}{n+2}}(\R^n)}+\|a_0(y)\phi\|_{L^{\frac{2n}{n+2}}(\R^n)}\right),$$
and $\varphi(y)=|y|^{2-n}\varphi(|y|^{-2}y)$.

We study the last term in two different regions. First, in
$$\R^n\setminus (\{\cup_{j=1}^k B(\xi_j,\frac{\oalpha}{k})\}\cup \{\cup_{l=1}^h B(\eta_l, \frac{\halpha}{h})\})$$
we can estimate $a_0$ as
$$|a_0(y)|\leq C U^{p-2}\left[\sum_{j=1}^k\frac{\mu^{\frac{n-2}{2}}}{|y-\xi_j|^{n-2}}+\sum_{l=1}^h\frac{\lambda^{\frac{n-2}{2}}}{|y-\eta_l|^{n-2}}\right],$$
and consequently,
\begin{equation}\begin{split}\label{a0first}
\int_{\R^n\setminus \left(\{\cup_{j=1}^k B(\xi_j,\frac{\oalpha}{k})\}\cup \{\cup_{l=1}^h B(\eta_l, \frac{\halpha}{h})\}\right)}|a_0(y)|^{\frac{2n}{n+2}}\,dy\leq C\left(k^{-(n-1)}+h^{-(n-1)}\right).
\end{split}\end{equation}
Consider now $j\in\{1,\ldots,k\}$ and the ball $B(\xi_j,\frac{\oalpha}{k})$. Here
$$|a_0(y)|\leq C|U_{\mu,\xi_j}(y)|^{p-1},$$
and thus
\begin{equation}\begin{split}\label{a0sec}
\sum_{j=1}^k\int_{B(\xi_j,\frac{\oalpha}{k})} |a_0(y)|^{\frac{2n}{n+2}}\,dy&\leq C\sum_{j=1}^k\int_{B(\xi_j,\frac{\oalpha}{k})}\left[\frac{\mu^{-\frac{n-2}{2}}}{1+|y-\xi_j|^{n-2}}\right]^{(p-1)\frac{2n}{n+2}}\\
&\leq C k \mu^{-2\frac{2n}{n+2}}\mu^n\int_{B(0,\frac{1}{\mu k})}\left[\frac{1}{1+|y|^{n-2}}\right]^{(p-1)\frac{2n}{n+2}}\,dy\\
&\leq C k^{-(n-1)}.
\end{split}\end{equation}
Likewise,
\begin{equation}\label{a0third}
\sum_{l=1}^k\int_{B(\eta_l,\frac{\halpha}{h})} |a_0(y)|^{\frac{2n}{n+2}}\,dy\leq Ch^{-(n-1)}.
\end{equation}
Putting \eqref{a0first}, \eqref{a0sec} and \eqref{a0third} together we conclude that if $\phi\in B$, then
$$\|a_0(y)\phi\|_{L^{\frac{2n}{n+2}}(\R^n)}\leq \|\phi\|_{L^\infty(\R^n)}\|a_0(y)\|_{L^{\frac{2n}{n+2}}(\R^n)}\leq C(\mu^{\frac{n-1}{n}}+\lambda^{\frac{n-1}{n}}).$$
Furthermore
\begin{equation}\begin{split}
\|A(\phi_1)-A(\phi_2)\|_*&\leq C \|a_0(y)(\phi_1-\phi_2)\|_{L^{\frac{2n}{n+2}}(\R^n)}\\
&\leq C \|a_0(y)\|_{L^{\frac{2n}{n+2}}(\R^n)}\|\phi_1-\phi_2\|_*\\
&=o(1)\|\phi_1-\phi_2\|_*,
\end{split}\end{equation}
where $o(1)$ denotes a quantity which goes to zero when $k$, $h$ tend to infinity.
Thus, $A$ defines a contraction mapping whenever
$$\|h\|_{L^{\frac{2n}{n+2}}}\leq C(\mu^{\frac{n-1}{n}}+\lambda^{\frac{n-1}{n}}).$$
Hence, considering $h=L(Z_{\alpha 0})$, by \eqref{acero} we conclude the existence of a solution to \eqref{VarphiZero} satisfying
$$\|\varphi_{\alpha 0}^\perp\|_*\leq  C(\mu^{\frac{n-1}{n}}+\lambda^{\frac{n-1}{n}}).$$

\noindent Consider now $j\in\{1,\ldots,k\}$, $l\in\{1,\ldots, h\}$, and let us write,
$$\ovarphi_j^\perp=\sum_{\alpha=0}^n c_{\alpha j} \ovarphi_{\alpha j}^\perp,\qquad\mbox{with}\qquad L(\ovarphi_{\alpha j}^\perp)=-L(\oZ_{\alpha j}),$$
$$\hvarphi_l^\perp=\sum_{\alpha=0}^n c_{\alpha l} \hvarphi_{\alpha l}^\perp,\qquad\mbox{with}\qquad L(\hvarphi_{\alpha l}^\perp)=-L(\hZ_{\alpha l}).$$
Performing the change of variables
$$\overline{\ovarphi}_j^\perp(y):=\mu^{\frac{n-2}{2}}\ovarphi_{\alpha j}^\perp(\mu y +\xi_j),\qquad \hat{\hvarphi}_l^\perp(y):=\lambda^{\frac{n-2}{2}}\hvarphi_{\alpha l}^\perp(\lambda y +\eta_l),$$
the previous equations turn into
$$\Delta(\overline{\ovarphi}_j^\perp)+p\gamma U^{p-1}(\overline{\ovarphi}_j^\perp)+p\gamma \overline{a}_j(y)\overline{\ovarphi}_j^\perp=\oh_j(y),$$
$$\Delta(\hat{\hvarphi}_l^\perp)+p\gamma U^{p-1}(\hat{\hvarphi}_l^\perp)+p\gamma \hat{a}_l(y)\hat{\hvarphi}_l^\perp=\hh_l(y),$$
where
$$\overline{a}_j(y):=p\gamma[(\mu^{-\frac{n-2}{2}}|u|(\mu y +\xi_j))^{p-1}-U^{p-1}],\;\; \oh_j(y):=-\mu^{\frac{n+2}{2}}L(\oZ_{\alpha j})(\mu y+\xi_j),$$
$$\hat{a}_l(y):=p\gamma[(\lambda^{-\frac{n-2}{2}}|u|(\lambda y +\eta_l))^{p-1}-U^{p-1}],\;\; \hh_l(y):=-\lambda^{\frac{n+2}{2}}L(\hZ_{\alpha l})(\lambda y+\eta_l).$$
Performing an analogous fixed point argument we conclude \eqref{f2}.

\medskip

\section{Final argument}\label{final}

Let $\left[\begin{array}{c} c_0\\ c_1\\\ldots\\c_n\end{array}\right]$ be the solution to \eqref{finalSyst} provided by Proposition \ref{nonso}, and let $t_0,t_1,t_2,t_3,t_4$, $\overline{t}_0,\overline{t}_1,\overline{t}_2$, $\hat{t}_0,\hat{t}_1,\hat{t}_2,$ and $t_\alpha,\overline{\nu}_{\alpha 1},\overline{\nu}_{\alpha 2},\hat{\nu}_{\alpha 1}, \hat{\nu}_{\alpha 2}$, $\alpha=5,\ldots,n$, be the associated parameters. Thus, it follows straightforward the existence of a unique vector of parameters
$$(t_0^*,\ldots,t_4^*,\overline{t}_0^*,\overline{t}_1^*,\overline{t}_2^*,\hat{t}_0^*,\hat{t}_1^*,\hat{t}_2^*,t_5^*,\overline{\nu}_{5 1}^*,\overline{\nu}_{5 2}^*,\hat{\nu}_{5 1}^*, \hat{\nu}_{5 2}^*,\ldots,t_n^*,\overline{\nu}_{n 1}^*,\overline{\nu}_{n 2}^*,\hat{\nu}_{n 1}^*, \hat{\nu}_{n 2}^*)$$
such that $\left[\begin{array}{c} c_0\\ c_1\\\ldots\\c_n\end{array}\right]$ solves the system in Proposition \ref{systC} and, equivalently, \eqref{systemFact1}. Moreover,
\begin{equation*}\begin{split}
\|(t_0^*,t_1^*,t_2^*,t_3^*,t_4^*,\overline{t}_0^*,\overline{t}_1^*,\overline{t}_2^*,\hat{t}_0^*,\hat{t}_1^*,\hat{t}_2^*,&t_5^*,\overline{\nu}_{5 1}^*,\overline{\nu}_{5 2}^*,\hat{\nu}_{5 1}^*, \hat{\nu}_{5 2}^*,\ldots,t_n^*,\overline{\nu}_{n 1}^*,\overline{\nu}_{n 2}^*,\hat{\nu}_{n 1}^*, \hat{\nu}_{n 2}^*)\|\leq C\|\varphi^\perp\|,
\end{split}\end{equation*}
and therefore
$$\|\left[\begin{array}{c} c_0\\ c_1\\\ldots\\c_n\end{array}\right]\|\leq C\|\varphi^\perp\|.$$
This estimate, together with \eqref{f2}, allows us to conclude
$$c_\alpha =0\qquad\forall\,\alpha=0,\ldots,n,$$
and thus $\varphi^\perp\equiv 0$. Replacing this in \eqref{tildeVarphi} the proof of Theorem \ref{nondeg} is complete.

\medskip

\section{Appendix}\label{appe1}

According to their definitions, see \eqref{z0}--\eqref{z5} and \eqref{newzalpha}, it is convenient to rewrite the functions $z_\alpha$ as
\begin{equation*}\begin{split}\label{z0small}
z_0(y)=&Z_{00}(y)-\sum_{j=1}^k\left[\oZ_{0j}(y)+\oZ_{1j}(y)\right]-\sum_{l=1}^h\left[\hZ_{0l}(y)+\hZ_{3l}(y)\right],
\end{split}\end{equation*}
\begin{equation*}\begin{split}
z_1(y)=&Z_{10}(y)-\sum_{j=1}^k\frac{\cos{\otheta_j}\oZ_{1j}(y)-\sin{\otheta_j}\oZ_{2j}(y)}{\sqrt{1-\mu^2}}-\sum_{l=1}^h\hZ_{1l}(y),\\
z_2(y)=&Z_{20}(y)-\sum_{j=1}^k\frac{\sin{\otheta_j}\oZ_{1j}(y)+\cos{\otheta_j}\oZ_{2j}(y)}{\sqrt{1-\mu^2}}-\sum_{l=1}^h\hZ_{2l}(y),
\\
z_3(y)=&Z_{30}(y)-\sum_{j=1}^k\oZ_{3j}(y)-\sum_{l=1}^h\frac{\cos{\htheta_l}\hZ_{3l}(y)-\sin{\htheta_l}\hZ_{4l}(y)}{\sqrt{1-\lambda^2}},\\
z_4(y)=&Z_{40}(y)-\sum_{j=1}^k\oZ_{4j}(y)-\sum_{l=1}^h\frac{\sin{\htheta_l}\hZ_{3l}(y)+\cos{\htheta_l}\hZ_{4l}(y)}{\sqrt{1-\lambda^2}},\\
z_\alpha(y)=&Z_{\alpha 0}(y)-\sum_{j=1}^k\oZ_{\alpha j}(y)-\sum_{l=1}^h\hZ_{\alpha l}(y),\quad \alpha=5,\ldots,n \\
z_{n+1}(y)&=-\sum_{j=1}^k\oZ_{2j}(y),\quad
z_{n+2}(y)=-\sum_{l=1}^h\hZ_{4l}(y),\\
\end{split}\end{equation*}
\begin{equation*}\begin{split}
z_{n+7}(y)&=-\sqrt{1-\mu^2}\sum_{j=1}^k\cos{\otheta_j}\oZ_{3j}(y)+\sqrt{1-\lambda^2}\sum_{l=1}^h\cos{\htheta_l}\hZ_{1l}(y),\\
z_{n+8}(y)&=-\sqrt{1-\mu^2}\sum_{j=1}^k\cos{\otheta_j}\oZ_{4j}(y)+\sqrt{1-\lambda^2}\sum_{l=1}^h\sin{\htheta_l}\hZ_{1l}(y),\\
z_{n+\alpha+4}(y)&=-\sqrt{1-\mu^2}\sum_{j=1}^k\cos{\otheta_j}\oZ_{\alpha j}(y), \quad \alpha=5,\ldots,n,\\
z_{2n+5}(y)&=-\sqrt{1-\mu^2}\sum_{j=1}^k\sin{\otheta_j}\oZ_{3j}+\sqrt{1-\lambda^2}\sum_{l=1}^h\cos{\htheta_l}\hZ_{2l}\\
z_{2n+6}(y)&=-\sqrt{1-\mu^2}\sum_{j=1}^k\sin{\otheta_j}\oZ_{4j}+\sqrt{1-\lambda^2}\sum_{l=1}^h\sin{\htheta_l}\hZ_{2l}\\
\end{split}\end{equation*}
and, for $\alpha=5,\ldots,n$,
\begin{equation*}\begin{split}
z_{2n+\alpha+2}(y)=-\sqrt{1-\mu^2}&\sum_{j=1}^k\sin{\otheta_j}\oZ_{\alpha j},\quad
z_{3n+\alpha-2}(y)=-\sqrt{1-\lambda^2}\sum_{l=1}^h\sin{\htheta_l}\hZ_{3l},\\
z_{4n+\alpha-6}(y)&=-\sqrt{1-\lambda^2}\sum_{l=1}^h\cos{\htheta_l}\hZ_{4l}.
\end{split}\end{equation*}
The proof of the above identities follows from straightforward computations, and the symmetry properties for
$U (y) + \psi (y)$, for $ U_{\mu , \xi_j} (y) + \overline \phi_j (y) $ and $U_{\lambda , \eta_l} (y) + \hat \phi_l (y)$ respectively.

A less straightforward computation gives that,  $\alpha = n+3, n+4, n+5, n+6$, we have
\begin{equation}\begin{split} \label{soloquesto}
{\bf z}_{n+3}(y)&=z_1 - 2  \sqrt{1-\mu^2}\sum_{j=1}^k \cos\otheta_j\left[   \oZ_{0j}(y)  + \oZ_{1j} \right] ,\\
{\bf z}_{n+4}(y)&=z_2 - 2  \sqrt{1-\mu^2}\sum_{j=1}^k \sin\otheta_j \left[   \oZ_{0j}(y)  + \oZ_{1j} \right], \\
{\bf z}_{n+5}(y)&=z_1 - 2  \sqrt{1-\lambda^2} \sum_{l=1}^h   \cos\htheta_l  \left[  \hZ_{0l}(y)  + \hZ_{3l} \right], \\
{\bf z}_{n+6}(y)&=z_1 - 2   \sqrt{1-\lambda^2} \sum_{j=1}^h  \sin\htheta_l\left[  \hZ_{0l}(y)  +  \hZ_{4l} \right] .\\
\end{split}\end{equation}
We shall prove the validity of the first identity in \eqref{soloquesto}.
The proofs of the validity of the the other expressions in \eqref{soloquesto} are similar. We write
$$
{\bf z}_{n+3} = z_1 + T (u), \qquad T (u) := (|y|^2 -1 ) {\partial u \over \partial y_1} - 2y_1 ({n-2 \over 2} u (y) + \nabla u (y) \cdot y ).
$$
Thus \eqref{soloquesto} follows from \eqref{newzalpha} and from
\begin{equation}
\label{newsoloquesto}
T(u) = -2\sum_{j=1}^k \xi_{j1} \left[ \oZ_{0j} +\nabla (U_{\mu , \xi_j} + \overline \phi_j  ) (y) \cdot \xi_j \right]
.
\end{equation}
From the explicit expression of $u$ in \eqref{finalform}, we get
$$
T(u) = T(U +\psi ) - \sum_{j=1}^k T (U_{\mu , \xi_j} + \overline \phi_j  ) - \sum_{l=1}^h T(U_{\lambda , \eta_l} +\hat \phi_l ) .
$$
We shall first show that $ T(U +\psi ) (y) \equiv 0$, and $T(U_{\lambda , \eta_l} +\hat \phi_l )  (y) \equiv 0$ for any $l=1, \ldots , h$.

\medskip
Observe that, if $v$ is any smooth function and if we define $h(z) := {\partial  \over \partial z_1} \left( |z|^{2-n} v({z \over |z|^2} ) \right)$, then we have
\begin{equation*}\begin{split}
h(z) &= -{2 z_1 \over |z|^n} \left[ {n-2 \over 2} v   \left( {z \over |z|^2} \right) + \nabla v \left( {z \over |z|^2} \right) \cdot \left( {z \over |z|^2} \right) \right] \\
&+ {1\over |z|^n} {\partial v \over \partial z_1} \left( {z \over |z|^2} \right),
\end{split} \end{equation*}
and $g(y):= {1\over |y|^{n-2} } h ({y \over |y|^2} )$ takes the form
$$
g(y) = -2 y_1 \left[ {n-2 \over 2} v(y) + \nabla v (y) \cdot y \right] + |y|^2 {\partial v \over \partial y_1} (y).
$$
With this is mind, one gets that if $v$ is Kelvin invariant
$
v(y ) = |y|^{n-2} v \left( {y \over |y|^2} \right), $ then
\begin{equation}\label{gino}{1\over |y|^{n-2}} {\partial v \over \partial y_1} \left( {y\over |y|^2} \right) =
-2 y_1 \left[ {n-2 \over 2} v(y) + \nabla v (y) \cdot y \right] + |y|^2 {\partial v \over \partial y_1} (y).
\end{equation}
On the other hand, if $v$ is Kelvin invariant (with respect to the origin) and even in $y_1$, then also
the function ${\partial v \over \partial y_1}$ is Kelvin invariant, that is $
{\partial v \over \partial y_1}(y ) = |y|^{n-2} {\partial v \over \partial y_1} \left( {y \over |y|^2} \right). $
By \eqref{gino}, we get that any function $v$ which is invariant under Kelvin transform (with respect to the origin) and even in the $y_1$ direction,
one that $T(v) (y) \equiv 0$. Since the functions $ (U +\psi ) (y) $, and $(U_{\lambda , \eta_l} +\hat \phi_l )  (y)$ for any $l=1, \ldots , h$
are invariant under Kelvin transform and even in $y_1$, we get the proof of our claim.

\medskip
Let us fix $j \in \{1 , \ldots , k\}$. We write, for $v(y) = ( U_{\mu , \xi_j} + \overline \phi_j)  (y)$,
\begin{equation*}\begin{split}
T (U_{\mu , \xi_j} + \overline \phi_j  ) (y) &=\underbrace{(|y|^2 -1) {\partial v \over \partial y_1 } (y) - 2 (y-\xi_j )_1 [ {n-2 \over 2} v (y) + \nabla v (y) \cdot y] }_{=:T_j (v)} \\
&-2 (\xi_j)_1\left[ \oZ_{0j} (y) + \nabla v(y) \cdot \xi_j \right] .
\end{split}\end{equation*}
We claim that $T_j (U_{\mu , \xi_j} + \overline \phi_j  ) (y) \equiv 0$. To prove this fact, we recall that
$$
v(y):= (U_{\mu , \xi_j} + \overline \phi_j  ) (y) = \mu^{-{n-2 \over 2} } (U + \overline{\ophi}_1 ) \left({y-\xi_j \over \mu} \right),
$$
see Section \ref{sec3}. Also, $\mu$ and $|\xi|$ are related so that  $U_{\mu , \xi_j} + \overline \phi_j $ is invariant under Kelvin transform. Thus, from \eqref{gino}, we get
$$
T_j (v) (y) = {1\over |y|^{n-2}} {\partial v \over \partial y_1} \left({y \over |y|^2} \right)- {\partial v \over \partial y_1} (y) + 2 (\xi_j)_1 \left[ {n-2 \over 2} v(y) + \nabla v (y) \cdot y\right].
$$
We note that, in this case, $U_{\mu , \xi_j} + \overline \phi_j $ is not even in the $y_1$ variable, so that one gets
\begin{equation*}\begin{split}
{1\over |y|^{n-2}} {\partial v \over \partial y_1} \left({y \over |y|^2} \right)&= {\partial v \over \partial y_1} (y) - (\xi_j)_1 \left[ (n-2) v(y) +2 \nabla v (y) \cdot y\right].
\end{split}
\end{equation*}
This concludes the proof of \eqref{newsoloquesto}.


\begin{thebibliography}{1}
\setlinespacing{0.98}
\frenchspacing

\bibitem{BreKa} C.Brenier, N. Kapouleas, Complete Constant Mean Curvature Hypersurfaces in Euclidean space of dimension four or higher, 	 arXiv:1707.04008.




\bibitem{CGS} L.A. Caffarelli, B. Gidas, J. Spruck, Asymptotic symmetry and local behavior
of semilinear elliptic equations with critical Sobolev growth, {\em Comm. Pure
Appl. Math.} 42 (1989), 271-297.



\bibitem{DMY}
J. D{\'a}vila, M. del Pino, Y. Sire, Nondegeneracy of the bubble in the critical case for nonlocal
  equations, {\em Proc. Amer. Math. Soc.}, 141(11) (2013), 3865--3870.

\bibitem{dPMPP}M. del Pino, M. Musso, F. Pacard, A. Pistoia, Large energy entire solutions for the Yamabe equation. {\em Journal of Differential Equations } 251 (2011), 2568--2597.


\bibitem{dmpp2}  M. del Pino, M. Musso, F. Pacard. A. Pistoia. Torus action on $S^n$ and sign changing solutions for conformally invariant equations. {\em Annali della Scuola Normale Superiore di Pisa } (5) 12 (2013), no. 1, 209--237.

\bibitem{D} W. Ding, On a conformally invariant elliptic equation on $R^n$, {\em Communications in Mathematical Physics }  107 (1986), 331-335.

\bibitem{DKM}{T. Duyckaerts, C. Kenig, F. Merle},  Solutions of the focusing nonradial critical wave equation with the compactness property, to appear in {\em Ann. Sc. Norm. Super. Pisa Cl. Sci}.

\bibitem{DKM2} T. Duyckaerts, C. Kenig, F. Merle, Profiles of bounded radial solutions of the focusing, energy-critical wave equation. {\em Geom. Funct. Anal.} 22 (2012), no. 3, 639-698.

\bibitem{DKM3} T. Duyckaerts, C. Kenig, F. Merle, Universality of the blow-up profile for small type II blow-up  solutions of the energy-critical wave equation: the nonradial case. {\em J. Eur. Math. Soc. (JEMS)} 14 (2012), no. 5, 1389-1454.


\bibitem{FL} R. L. Frank, E. Lenzmann, Uniqueness and nondegeneracy of ground states for $(-\Delta )^s Q +Q- Q^{\alpha+1} = 0$ in $R$. {\em Acta Math.} 210 (2013), no. 2, 261"1¤7318.


\bibitem{FLS} R. L. Frank, E. Lenzmann, L. Silvestre,   Uniqueness of radial solutions for the fractional Laplacian. {\em Comm. Pure Appl. Math. } 69 (2016), no. 9, 1671--1726.






\bibitem{Heb} E. Hebey, Introduction \`a l'analyse non lin\'eaire sur les vari\'et\'ees,  {\em Diderot \'editeur} (1997).

\bibitem{HV} E. Hebey,  M. Vaugon,  Existence and multiplicity of nodal solutions for nonlinear elliptic equations with critical Sobolev growth.{\em  J. Funct. Anal.} 119 (1994), no. 2, 298--318.




\bibitem{Ka} N. Kapouleas, Minimal surfaces in the round three-sphere by doubling the equatorial two-sphere, I, {\em J. Diff. Geom.} (2015).



    \bibitem{KM1} C. Kenig,  F.Merle, Global well-posedness, scattering and blow-up for the energy-critical focusing non-linear wave equation in the radial case, {\em Invent. Math.} 166 (2006), 645-675.

        \bibitem{KM2} C. Kenig, F. Merle, Global well-posedness, scattering and blow-up for the energy-critical focusing nonlinear wave equation, {\em Acta Math.} 201 (2008), 147-212.


\bibitem{KS}{I. Kra, R. Simanca},  On circulant matrices, {\em Notices. Amer. Math. Soc. }  59, no. 3, (2012), 368--377.

\bibitem{KST} J. Krieger, W. Schlag, D. Tataru, Slow blow-up solutions for the $H^1 (\R^3)$ critical focusing semilinear wave equation. {\em Duke Math. J.} 147 (2009), 1-53.

\bibitem{K}  M. K. Kwong, Uniqueness of positive solutions of $\Delta u - u + u^p
 = 0$ in $R^n$, {\em Arch. Rational Mech.
Anal.} 105 (1989), pp. 243--266.





\bibitem{MW}{M. Musso, J. Wei}, Nondegeneracy of nodal solutions to the critical Yamabe problem. {\em Communications in Mathematical Physics,}  Volume 340, Issue 3, (2015), 1049--1107.



\bibitem{Rey}
O. Rey,  The role of the Green's function in a nonlinear elliptic equation involving the critical Sobolev exponent. {\em J. Funct. Anal.} 89 (1990), no. 1, 1-52.

\bibitem{RV1} F. Robert, J. Vetois,  Examples of non-isolated blow-up for perturbations of the scalar curvature equation on non locally conformally flat manifolds,  to appear in {\em J. of Differential Geometry}.


\bibitem{RV2} F. Robert, J. Vetois,   Sign-changing solutions to elliptic second order equations: glueing a peak to a degenerate critical manifold,   {\em Calc. Var. Partial Differential Equations} 54 (2015), no. 1, 693--716.



\end{thebibliography}
\end{document}